\documentclass[a4paper,11pt]{amsart}
 \usepackage{amssymb,amsfonts,latexsym}
 \usepackage{graphics,verbatim}
 \usepackage{graphicx}
 \usepackage{comment}
 \usepackage{upref}
 \usepackage{hyperref}
\usepackage[utf8]{inputenc} 
\usepackage[T1]{fontenc}    
\usepackage{lmodern}    
\usepackage{xcolor}        
\usepackage{xcolor}
 \usepackage{amsthm}
 \usepackage{enumitem}
 \usepackage{appendix}  
 \usepackage{mathtools}
 \newtheorem{theorem}{Theorem}[section]
 \newtheorem{proposition}{Proposition}[section]
 \newtheorem{lemma}{Lemma}[section]
 
 \newtheorem{remark}{Remark}[section]

 \newcommand{\dvg}{\mathrm{~d}V_{\mathbb B^N}}
 \mathtoolsset{showonlyrefs}
 \setlength{\textheight}{22cm}
 \setlength{\textwidth}{15.1cm}
 \oddsidemargin0.15in \evensidemargin0.15in

 \newcommand{\rn}{\ensuremath{\mathbb{B}^N}}

 \newcommand{\norm}[1]{\left\Vert#1\right\Vert}

 \newcommand{\be} {\begin{equation}}
 \newcommand{\ee} {\end{equation}}
 \newcommand{\bea} {\begin{eqnarray}}
 \newcommand{\eea} {\end{eqnarray}}
 \newcommand{\Bea} {\begin{eqnarray*}}
 	\newcommand{\Eea} {\end{eqnarray*}}

 \newcommand{\la} {\lambda}

 \newcommand{\noi} {\noindent}

  \newcommand{\bn}{\mathbb B^N}
   \newcommand{\Rn}{\mathbb R^N}

 \catcode`\@=11
 
 \makeatletter \@addtoreset{equation}{section} \makeatother

 \begin{document}

 	\title[]
	{Existence, symmetry and regularity of ground states of a non linear choquard equation in the hyperbolic space}
 \keywords{Hyperbolic space, Elliptic equation, Hardy-Littlewood-Sobolev inequality, Fractional laplacian, Green's function, Riesz potential, Heat kernel, Choquard equation, Existence, Symmetry, Regularity}
 \author[Gupta]{Diksha Gupta}
 \address{D.~Gupta, Department of Mathematics, Indian Institute of Technology Delhi, Hauz Khas, New Delhi, Delhi 110016, India}
 \email{maz208233@maths.iitd.ac.in}
 
 \author[Sreenadh]{K.~Sreenadh}
	\address{K.~Sreenadh, Department of Mathematics, Indian Institute of Technology Delhi, Hauz Khas, New Delhi, Delhi 110016,  India}
	\email{sreenadh@maths.iitd.ac.in}
 	
 \begin{abstract}
In this paper, we explore the positive solutions of a nonlinear Choquard equation involving the green kernel of the fractional operator $(-\Delta_{\mathbb{B}^N})^{-\alpha/2}$ in the hyperbolic space, where $\Delta_{\mathbb{B}^N}$ represents the Laplace-Beltrami operator on $\mathbb{B}^N$, with $\alpha \in (0, N)$ and $N \geq 3$. This study is analogous to the Choquard equation in the Euclidean space, which involves the non-local Riesz potential operator. We consider the functional setting within the Sobolev space $H^1(\mathbb{B}^N)$, employing advanced harmonic analysis techniques, particularly the Helgason Fourier transform and semigroup approach to the fractional Laplacian. Moreover, the Hardy-Littlewood-Sobolev inequality on complete Riemannian manifolds, as developed by Varopoulos, is pivotal in our analysis. We prove an existence result for the problem \eqref{mainEq} in the subcritical case. Moreover, we also demonstrate that solutions exhibit radial symmetry, and establish the regularity properties.
\medskip

\noindent
\emph{\bf 2020 MSC:} 35B09, 35B38, 35J20, 35R01   
 	\end{abstract}
\maketitle
\tableofcontents

\begin{section}{Introduction}
In this paper, we explore the following nonlinear Choquard problem in the hyperbolic space $\mathbb{B}^{N}$:
\begin{equation}
 	\tag{$F_{\alpha, \lambda}$}\label{mainEq}
 	\left\{\begin{aligned} -\Delta_{\mathbb{B}^{N}} u \, - \, \lambda u \, &= \left[(- \Delta_{\mathbb{B}^{N}})^{-\frac{\alpha}{2}}|u|^p\right]|u|^{p-2}u,\\
 	u>0 \text{ in } \mathbb B^N &\,\,\,\text{ and }\quad  u\in \mathcal{H}_\lambda\left(\mathbb{B}^N\right),
  \end{aligned}
 	\right.
\end{equation}
where $\Delta_{\mathbb{B}^{N}}$ denotes the Laplace-Beltrami operator on $\mathbb{B}^{N}$, $\lambda \leq \frac{(N-1)^2}{4}$, $1 < p < 2^*_{\alpha} = \frac{N+\alpha}{N-2}$, $0 < \alpha < N$, $N \geq 3$, and $\mathcal{H}_\lambda\left(\mathbb{B}^N\right)$ denotes the completion of $C_0^{\infty}\left(\mathbb{B}^N\right)$ with respect to the norm mentioned in \eqref{norm}. Moreover, $(- \Delta_{\mathbb{B}^{N}})^{-\frac{\alpha}{2}}$ is the green kernel of the fractional operator, $\frac{(N-1)^2}{4}$ is the bottom of the $L^2$-spectrum of $-\Delta_{\bn}$ and $2^*_\alpha$ is the critical exponent in the context of the Hardy-Littlewood-Sobolev (HLS) inequality. 

The equation when posed in $\Rn$ bears a notable resemblance to  the Choquard equation, thoroughly explored by Vitali and Moroz in their study \cite{MS1}. The equation is given as follows
\begin{equation}
-\Delta u+u=\left(I_\alpha *|u|^p\right)|u|^{p-2} u \quad \text { in } \mathbb{R}^N, \label{ChoqRn}
\end{equation}
where $I_\alpha$ denotes the Riesz potential, $\alpha \in (0,N)$ and $p>1$. The Riesz potential $I_\alpha: \mathbb{R}^N \rightarrow \mathbb{R}$ is  defined as
\begin{equation*}
I_\alpha(x)=\frac{\Gamma\left(\frac{N-\alpha}{2}\right)}{\Gamma\left(\frac{\alpha}{2}\right) \pi^{N / 2} 2^\alpha|x|^{N-\alpha}},
\end{equation*}
with $\Gamma$ representing the Euler gamma function. To draw a connection between the equation \eqref{ChoqRn} and the one explored in this article, it's useful to view the Riesz potential as a representation of the inverse fractional Laplacian.  In mathematical physics, the Newtonian potential of a function $f \in \mathcal{S}(\mathbb{R}^N)$ is defined as
\begin{equation*}
I_2(f)(x)=\frac{1}{4 \pi^{\frac{N}{2}}} \Gamma\left(\frac{N-2}{2}\right) \int_{\mathbb{R}^N} \frac{f(y)}{|x-y|^{N-2}} d y,
\end{equation*}
where $\mathcal S(\Rn)$ denotes the Schwartz space of rapidly decreasing functions in $\Rn$. The convolution kernel $\frac{1}{4 \pi^{\frac{N}{2}}} \Gamma\left(\frac{N - 2}{2}\right) \frac{1}{|x|^{N - 2}}$ appearing in $I_2(f)$ matches the fundamental solution
\begin{equation*}
u(x)=\frac{1}{(N-2) \omega_{N-1}} \frac{1}{|x|^{N-2}}
\end{equation*}
of $-\Delta$ where $\omega_{N-1}$ is the surface area of the unit sphere in $\Rn$. Extending this idea, the generalization of the Newtonian potential indicates that for any function $f \in \mathcal{S}(\mathbb{R}^N)$, in the distributional space $S^\prime(\mathbb{R}^N)$, we have $I_{\alpha}(-\Delta)^{\alpha/2} f = (-\Delta)^{\alpha/2} I_\alpha f = f$, where $(-\Delta)^{\alpha/2}$ can be defined in several equivalent ways (see \cite{NPV,Mk}).

These partial differential equations (PDEs) aren't just fascinating from a purely mathematical perspective; they also show up in various physical scenarios. For example, they appear in models like the Einstein–Klein–Gordon and Einstein–Dirac systems \cite{GG}. In practical applications like nonlinear optics and Bose-Einstein condensates, similar equations help describe how waves or particle densities evolve when both linear and nonlinear effects are at play. The Choquard equation is particularly interesting because it deals with nonlocal interactions, meaning the behavior at any point in the system is influenced by the entire field. Particularly, the following Choquard equation $$-\Delta u+u=\left(I_2 *|u|^2\right)u \text{ in }  \mathbb{R}^3,\quad u \in H^1(\mathbb{R}^3)$$
originally stems from the work of Frohlich and Pekar on polarons. They studied how free electrons in an ionic lattice interact with photons or the polarization they cause, essentially depicting an electron interacting with its own induced hole \cite{F1,F2,F3}. In 1976, Ph. Choquard adapted this model to explain a one-component plasma, showing its broader relevance in plasma physics. Interestingly, the Choquard equation is also known as the Schrödinger-Newton equation when it combines quantum mechanics with Newtonian gravity (see \cite{P1,M1, Jones1995, Pen}).

The Choquard equation in $\mathbb{R}^N$ has been the focus of extensive research, particularly concerning the existence and qualitative properties of its solutions. In \cite{MS1}, the authors established the existence of solutions to \eqref{ChoqRn} by employing variational methods with the concentration-compactness lemma of P.L. Lions. The Hardy-Littlewood-Sobolev inequality in $\mathbb{R}^N$ (\cite{LL1}) is heavily utilized in these techniques. Furthermore, in \cite{MS2}, they examined a more generalized form of the Choquard equation. In a subsequent study in \cite{MS4}, the authors considered the equation with a potential, establishing the existence of solutions for the lower critical exponent corresponding to the Hardy-Littlewood-Sobolev inequality. Gao and Yang, in \cite{GY}, have also contributed by studying the Brezis-Nirenberg type Choquard equation within a bounded domain. For additional related works, see \cite{DPV, FQT, MZ, MS3, YZ, ZZ1, ZY, ZDZZ}.

Recent research has extensively aimed to generalize elliptic equations to non-Euclidean domains, driven by their relevance to various physical phenomena. This exploration seeks to understand how curvature impacts the behavior and properties of the solutions. We refer to \cite{BS,CK, DG1,Hh, Li1} and the references cited therein, without any claim of completeness. Notably, Sandeep and Mancini's seminal work \cite{MS} demonstrated the existence and uniqueness of finite-energy positive solutions for the homogeneous elliptic equation: 
\begin{equation}\label{hsm}
	-\Delta_{\mathbb{B}^{N}} u \, - \, \lambda u \, = \, u^{p}, \quad u \in H^{1}\left(\mathbb{B}^{N}\right),
\end{equation}where $\lambda \leq \frac{(N-1)^2}{4},$ $1 < p \leq \frac{N+2}{N-2}$ if $N \geq 3; $ $1 < p < \infty$ if $N=2.$ They demonstrated that, for the subcritical case and $p > 1$, the equation \eqref{hsm} has a positive solution if and only if $\lambda < \frac{(N-1)^2}{4}$, and that this solution is unique up to hyperbolic isometries, except potentially for $N=2$ and $\lambda > \frac{2(p+1)}{(p+3)^2}$. Following their work, studies in \cite{BS, DG1, DG2} examined the existence of sign-changing solutions, compactness, and non-degeneracy, while \cite{BFG, BGGV} focused on infinite energy solutions and their asymptotic behavior. Sandeep and Mancini also found that equation \eqref{hsm} emerges naturally when analyzing Euler-Lagrange equations related to Hardy-Sobolev-Maz'ya (HSM) inequalities. They derived a sharp Poincaré-Sobolev inequality \eqref{PoinSobIneq} in hyperbolic space , which has spurred further interest in analogous HSM inequalities \cite{VGM} involving higher-order derivatives (see \cite{Lu1, Lu2}). The authors in \cite{Li1} have thereafter explored the existence, non-existence, and symmetry of solutions for the higher-order Brezis-Nirenberg problem in hyperbolic space. This work relies on complex estimations of Green's functions for fractional Laplacians, Helgason-Fourier analysis, and Hardy-Littlewood-Sobolev inequality in the hyperbolic spaces. Specifically, the Hardy-Littlewood-Sobolev inequality in hyperbolic space can be stated as follows: For $0 < \lambda < N$ and $p = \frac{2N}{2N - \lambda}$, if $f, g \in L^p(\mathbb{B}^N)$,

\begin{equation*}
\left|\int_{\mathbb{B}^{N}} \int_{\mathbb{B}^{N}} \frac{f(x) g(y)}{\left(2 \sinh \frac{\rho(T_{y}(x))}{2}\right)^{\lambda}} \dvg(x) \dvg(y)\right| \leq C_{N, \lambda} \|f\|_{p} \|g\|_{p},
\end{equation*}
\noindent
where $T_{y}(x)$ denotes M\"obius transformation, $\rho$ denotes the hyperbolic distance (see Section \ref{prelim}), and

\begin{equation*}
C_{N, \lambda} = \pi^{\lambda / 2} \frac{\Gamma(N / 2 - \lambda / 2)}{\Gamma(N - \lambda / 2)} \left(\frac{\Gamma(N / 2)}{\Gamma(N)}\right)^{-1 + \lambda / N}
\end{equation*}
\noindent
is the best Hardy-Littlewood-Sobolev constant on $\mathbb{R}^{N}$. This constant is sharp, and there are no nonzero extremal functions. This result can be easily derived from the HLS inequality in $\mathbb{R}^N$ through a conformal change of metric to the Euclidean ball \cite{Lu1}. Furthermore, this inequality parallels the HLS inequality in $\Rn$ for the specific case when $p = r = \frac{2N}{N - \lambda}$, as stated in Theorem 4.3 of \cite{LL1}. The presence of  $\sinh \frac{\rho}{2}$ in the hyperbolic space context is attributed to the Green's function associated with the conformal Laplacian $-\Delta_{\mathbb{B}^N}-\frac{N(N-2)}{4}$ (see \cite{Lu2}). Furthermore, Theorem \ref{HlSIneqHybSp} states a generalized form of the HLS inequality for any complete Riemannian manifold, under an additional assumption regarding the heat kernel.

In light of the aforementioned HLS inequality, the author in \cite{Hh} examines an equation in the hyperbolic space that is analogous to the Choquard equation in $\mathbb{R}^N$. The study demonstrates the existence of a positive solution in the subcritical range, specifically for $\frac{N+\alpha}{N}<p<\frac{N+\alpha}{N -2}$ under the condition  $\lambda\leq \frac{(N-1)^2}{4}$, and also for $p=\frac{N+\alpha}{N-2}$ when $\frac{N(N-2)}{4}<\lambda\leq\frac{(N-1)^2}{4}$. Interestingly, the upper critical exponent $\frac{N+\alpha}{N -2}$ resembles the critical Sobolev exponent $\frac{2N}{N -2}$ that appears in the Brezis-Nirenberg problem, while the exponent $\frac{N + \alpha}{N}$ naturally emerges from the application of the HLS inequality.

Inspired by the aforementioned problems and the general Hardy-Littlewood-Sobolev inequality \eqref{HlSIneqHybSp} on a complete Riemannian manifold, we have investigated the problem \eqref{mainEq} in this article. The derivation of this inequality relies on Stein’s maximal ergodic theorem. A significant challenge while analzing problems like \eqref{mainEq} is the loss of compactness, even in the subcritical range, as the embedding $H^1(\mathbb{B}^N) \hookrightarrow L^{p}(\mathbb{B}^N)$ is non-compact for any $2 \leq p \leq \frac{2N}{N-2}$. This issue arises from the hyperbolic isometries in the infinite volume space $\bn$. In Euclidean spaces, such challenges are typically addressed by dilating a given sequence, but this approach is ineffective in $\mathbb{B}^N$ due to the equivalence of its conformal and isometry groups. To address this, we employ a concentration function approach near infinity. Another complexity arises from the non-local nature of the problem, driven by the green kernel of the fractional operator. In addition to these, several technical challenges emerge in this context. These include the nonlinear nature of hyperbolic translations (as described in Section \ref{HypTrans}) and the complexity of the semigroup formula for the inverse fractional Laplacian, which involves the heat kernel in hyperbolic space. Unlike the Euclidean case, where the heat kernel is given by $G_t(x)=\frac{1}{(4\pi t)^{N/2}}e^{-|x|^2/4t},\;x\in \Rn,\;t>0$, the heat kernel in hyperbolic space is far more complex (see Section \ref{SemiGp}). To the best of our knowledge, the problem described by \eqref{mainEq} has not been studied in the literature to date.

The problem \eqref{mainEq} can be analyzed using variational methods, thanks to the Hardy-Littlewood-Sobolev inequality \eqref{HLSconstant}. To establish the existence of finite energy solutions, it is essential to examine the energy functional associated with \eqref{hsm}, defined by:
$$
\mathcal{I}(u)=\frac{\int_{\mathbb{B}^{N}}\left[\left|\nabla_{\mathbb{B}^{N}} u(x)\right|^{2}-\lambda u^{2}(x)\right] \mathrm{~d}V_{\bn}}{\left(\int_{\mathbb{B}^{N}} \left[(- \Delta_{\mathbb{B}^{N}})^{-\frac{\alpha}{2}}|u|^p\right] |u|^{p} \dvg\right)^{\frac{1}{p}}},\;\; u\in \mathcal{H}_{\lambda}\left(\mathbb{B}^{N}\right) \backslash\{0\},$$
for $\frac{N+\alpha}{N}\leq p\leq \frac{N+\alpha}{N -2}$ and the space $\mathcal{H}_{\lambda}\left(\mathbb{B}^{N}\right)$ defined in Section \ref{prelim}. We establish the following main existence result for $p$ within the subcritical range: 
\begin{theorem}\label{exiSubCrit}
Let $0 < \alpha < N$ and $\frac{N + \alpha}{N} < p < 2_{\alpha}^{*} = \frac{N + \alpha}{N - 2}$ for $N \geq 3$. Then \eqref{mainEq} possesses a positive solution for any $\lambda \leq \frac{(N-1)^2}{4}$.
\end{theorem}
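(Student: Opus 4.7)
The plan is to obtain the solution as a ground-state critical point of the energy functional
$$
E(u) = \frac{1}{2}\int_{\bn}\bigl(|\nabla_{\bn} u|^{2} - \lambda u^{2}\bigr)\dvg - \frac{1}{2p}\int_{\bn}\bigl[(-\Delta_{\bn})^{-\alpha/2}|u|^{p}\bigr]|u|^{p}\dvg
$$
on $\mathcal{H}_{\lambda}(\bn)$, equivalently as a minimizer of the quotient $\mathcal{I}$. The HLS-type inequality \eqref{HLSconstant} together with the Poincar\'e--Sobolev inequality \eqref{PoinSobIneq} ensures that $E$ is well defined, has the Mountain Pass geometry, and, since $2p>2$ in the stated range, any Palais--Smale sequence for $E$ is bounded. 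Existence therefore reduces to showing that a suitably normalized minimizing sequence $\{u_{n}\}$ has a nontrivial weak limit, up to a subsequence and a rearrangement by symmetries.

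The principal difficulty is the failure of compactness: because the isometry group of $\bn$ is noncompact, the embedding $\mathcal{H}_{\lambda}(\bn)\hookrightarrow L^{q}(\bn)$ is not compact for any $q\in[2,2^{*}]$. I would run a concentration-compactness argument tailored to hyperbolic translations. Introducing the concentration function
$$
\Lambda_{n}(R) = \sup_{y\in\bn}\int_{B(y,R)}|u_{n}|^{2}\dvg,
$$
I would first rule out vanishing: if $\Lambda_{n}(R)\to 0$ for some fixed $R>0$, then the hyperbolic analogue of Lions' lemma, valid thanks to the uniform local geometry of $\bn$, combined with interpolation yields $u_{n}\to 0$ in $L^{q}(\bn)$ for every $q\in(2,2N/(N-2))$; applying \eqref{HLSconstant} with the interior exponent $\tfrac{2Np}{N+\alpha}$, which lies strictly between $2$ and $2N/(N-2)$ precisely because $\tfrac{N+\alpha}{N}<p<\tfrac{N+\alpha}{N-2}$, then forces the nonlocal denominator of $\mathcal{I}(u_{n})$ to vanish, contradicting the boundedness away from zero of the minimum. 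Hence there exist $y_{n}\in\bn$ and $\delta,R>0$ with $\int_{B(y_{n},R)}|u_{n}|^{2}\dvg\geq\delta$. Choosing hyperbolic isometries $\tau_{n}$ with $\tau_{n}(0)=y_{n}$ and replacing $u_{n}$ by $\tilde u_{n}:=u_{n}\circ\tau_{n}$ preserves every quantity entering $\mathcal{I}$, so $\{\tilde u_{n}\}$ remains minimizing but now has uniform $L^{2}$-mass on the fixed ball $B(0,R)$, yielding a nontrivial weak limit $u\in\mathcal{H}_{\lambda}(\bn)\setminus\{0\}$ by local Rellich compactness.

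It remains to identify $u$ as an actual solution and to extract positivity. I would establish a Brezis--Lieb type decomposition for the nonlocal functional
$$
J(v)=\int_{\bn}\bigl[(-\Delta_{\bn})^{-\alpha/2}|v|^{p}\bigr]|v|^{p}\dvg,
$$
namely $J(\tilde u_{n})=J(u)+J(\tilde u_{n}-u)+o(1)$, and combine it with the classical Brezis--Lieb identity for the quadratic term; strict subadditivity, coming from the concavity of $t\mapsto t^{1/p}$ once the minimax level is taken into account, then forces $\tilde u_{n}-u\to 0$ strongly. Passing to the limit in the Euler--Lagrange equation yields that $u$ is a weak solution of \eqref{mainEq}, and positivity follows from the observation that $E$ depends only on $|u|$, because the Green kernel of $(-\Delta_{\bn})^{-\alpha/2}$ is positive, combined with the strong maximum principle applied to $|u|$. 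The hardest step I expect to be the Brezis--Lieb splitting for $J$: unlike the Euclidean Riesz potential, which is a convolution with an explicit kernel, $(-\Delta_{\bn})^{-\alpha/2}$ admits only an implicit description via the heat semigroup on $\bn$ (Section \ref{SemiGp}), and hyperbolic translations act nonlinearly on the representation. The splitting therefore has to be proved intrinsically, leveraging the HLS bound \eqref{HLSconstant} together with sharp pointwise estimates on the Green kernel, rather than through any Euclidean-style direct manipulation of the kernel.
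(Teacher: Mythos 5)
Your proposal is correct in its overall architecture (quotient minimization, isometry translations, a Brezis--Lieb splitting for the nonlocal term, positivity via the maximum principle and the positivity of the Green kernel), but the compactness engine is genuinely different from the one in the paper. You run the classical Lions concentration-compactness scheme: concentration function $\Lambda_n(R)=\sup_{y}\int_{B(y,R)}|u_n|^2$ over geodesic balls, a hyperbolic Lions vanishing lemma to rule out vanishing, translation to pin mass on a fixed ball, then Brezis--Lieb splitting plus strict subadditivity of the level to upgrade weak to strong convergence and pass to the limit in the Euler--Lagrange equation. The paper instead follows the Bhakta--Sandeep device tailored to the Poincar\'e disc: the concentration function is $Q_n(r)=\sup_{x\in S_r}\int_{A(x,r)}|u_n|^{2Np/(N+\alpha)}$, measured in the HLS exponent rather than $L^2$, over the conformally natural caps $A(a,r)=B(a,r)\cap\mathbb{B}^N$ with $\partial B(a,r)\perp\mathbb{S}^{N-1}$, which the isometry group acts transitively on (Lemma \ref{isomeLemm}); after translating so that the concentration is exactly $\delta$ on a fixed cap, the paper shows the translated Palais--Smale limit $v$ already lies on the Nehari manifold, derives a contradiction to $v\equiv 0$ from a localized reverse-H\"older estimate on $A(a,\sqrt 3)$ and the fixed level $\delta$, and then gets minimality of $v$ from weak lower semicontinuity of $\|\cdot\|_\lambda$ alone --- no strong convergence and no dichotomy/subadditivity step are needed. (The Brezis--Lieb lemma is stated but not actually invoked in the existence proof.) Your route is more recognizably ``standard'' concentration-compactness and transfers verbatim to other models; the paper's route sidesteps the strict-subadditivity argument entirely, which is worth noting since that step is where you yourself flag the most technical pain (the hyperbolic Green kernel is only implicit, so both the Brezis--Lieb splitting and any decay estimates behind strict subadditivity have to be proved through the HLS bound and heat-kernel estimates rather than by direct kernel manipulation). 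One caution on your side: be precise about which dichotomy you are ruling out --- after translating so that the weak limit is nonzero, the remaining obstruction is precisely dichotomy, and the inequality you need is that for a minimizing sequence on the Nehari manifold the escaping piece must carry zero energy; the concavity of $t\mapsto t^{(p-1)/p}$ gives the right strict inequality here, but you should write it out, since ``once the minimax level is taken into account'' glosses over the crucial normalization.
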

To prove the above theorem, we demonstrate the existence of a minimizer for the energy functional on the associated Nehari manifold. This proof hinges on a concentration-compactness argument at infinity and the fact that the equation under consideration is invariant under the isometry group in hyperbolic space. This minimizer is referred to as a \textit{ground state}. To establish the radial symmetry of the solutions, we utilize a symmetrization argument. This method is particularly effective in Choquard-type problems due to the presence of a Polya–Szego-like inequality, which implies a strong symmetrization effect on the nonlocal term. By using the minimality of the ground state, we derive a relationship between the function and its polarization, thereby establishing radial symmetry. Lastly, we obtain certain regularity results through classical bootstrap arguments.
\begin{remark}
   It is not trivial to establish a non-existence result analogous to \cite[Theorem 2]{MS1} in the critical case using the Pohožaev identity argument due to the nonlinear nature of the hyperbolic translation \eqref{HypTrans} involved in the definition of $\left(-\Delta_{\mathbb{B}^N}\right)^{-\alpha/2}$ (see \eqref{inverFracLap}). Nevertheless, by first deriving some a priori asymptotic estimates, one can prove the non-existence of solutions for $p = \frac{N + \alpha}{N - 2}$ and $\lambda \leq \frac{N(N-2)}{4}$.
\end{remark}
The paper is structured as follows: Section~\ref{prelim} introduces key notations and geometric definitions, along with preliminary concepts related to the hyperbolic space. Section~\ref{Existence} presents the proof of Theorem \ref{exiSubCrit}. In Section~\ref{Symmetry}, we state and prove the result concerning radial symmetry, while Section~\ref{regularity} addresses the regularity results. Numerous positive constants, whose specific values are irrelevant, are denoted by $C$.
\end{section}

\begin{section}{Preliminaries}\label{prelim}
In this section, we will introduce some of the notations and definitions used in this
paper and also recall some of the embeddings
related to the Sobolev space on the hyperbolic space. 
\smallskip
\noi
We will denote by $\bn$ the disc model of the hyperbolic space, i.e., the Euclidean unit ball $B(0,1):= \{x \in \mathbb{R}^N: |x|^2<1\}$ equipped with the Riemannian metric
\begin{align*}
	{\rm d}s^2 = \left(\frac{2}{1-|x|^2}\right)^2 \, {\rm d}x^2,
\end{align*}
where ${\rm d}x$ is the standard Euclidean metric and $|x|^2 = \sum_{i=1}^Nx_i^2$ is the standard Euclidean length. The corresponding volume element is given by $\mathrm{~d} V_{\mathbb{B}^{N}} = \big(\frac{2}{1-|x|^2}\big)^N {\rm d}x, $ where ${\rm d}x$ denotes the Lebesgue 
measure on $\rn$. $\nabla_{\bn}$ and $\Delta_{\bn}$ denote gradient 
 vector field and Laplace-Beltrami operator, respectively. Therefore in terms of local (global) coordinates $\nabla_{\bn}$ and $\Delta_{\bn}$ takes the form
\begin{align*} 
 \nabla_{\bn} = \left(\frac{1 - |x|^2}{2}\right)^2\nabla,  \quad 
 \Delta_{\bn} = \left(\frac{1 - |x|^2}{2}\right)^2 \Delta + (N - 2)\left(\frac{1 - |x|^2}{2}\right)  x \cdot \nabla,
\end{align*}
where $\nabla, \Delta$ are the standard Euclidean gradient vector field and Laplace operator, respectively, and '$\cdot$' denotes the 
standard inner product in $\mathbb{R}^N.$

 \smallskip 

\noindent

\subsection{M\"{o}bius Transformations and Convolution} \cite{LP}
\noindent
The M\"{o}bius transformations $T_a$ for each $a \in \bn$ are defined as follows: 
\begin{equation}
T_a(x)=\frac{|x-a|^2 a-\left(1-|a|^2\right)(x-a)}{1-2 x \cdot a+|x|^2|a|^2} \label{HypTrans}
\end{equation}
where $x \cdot a=x_1 a_1+x_2 a_2+\cdots+x_n a_n$ represents the scalar product in $\mathbb{R}^N$. The measure on $\bn$ is known to be invariant under M\"{o}bius transformations\\
The convolution of measurable functions $f$ and $g$ on $\bn$ can be defined using the Möbius transformations as follows:
\begin{equation*}
(f * g)(x)=\int_{\bn} f(y) g\left(T_x(y)\right)\dvg(y)
\end{equation*}
provided this integral exists. It is straightforward to verify that
\begin{equation*}
f * g=g * f.
\end{equation*}
\subsection{Hyperbolic Distance on $\bn$} The distance between $x$ and $y$ in $\bn$ can be defined as follows utilizing the M\"obius transformations:
\begin{equation*}
\rho(x, y)=\rho\left(T_x(y)\right)=\rho\left(T_y(x)\right)=\log \frac{1+\left|T_y(x)\right|}{1-\left|T_y(x)\right|}
\end{equation*}
where $\rho(x)= d(x,0)= \log \frac{1+|x|}{1-|x|}$ is the geodesic distance from the origin.\\
As a result, a subset of $\bn$ is a hyperbolic sphere in $\bn$ if and only if it is a Euclidean sphere in $\mathbb{R}^N$ and contained in $\bn$, possibly 
with a different centre and different radius, which can be explicitly computed from the formula of $d(x,y)$ \cite{RAT}.  Geodesic balls in $\bn$ of radius $r$ centred at $x \in \bn$ will be denoted by 
$$
B_r(x) : = \{ y \in \bn : d(x, y) < r \}.
$$
Additionally, if $g$ is radial, meaning $g=g(\rho)$, then for $f, g, h \in L^1\left(\bn\right)$, we have
\begin{equation*}
(f * g) * h=f *(g * h).
\end{equation*}
\subsection{ A sharp Poincar\'{e}-Sobolev inequality} (see \cite{MS})

\medskip

\noi{\bf Sobolev Space :} We will denote by ${H^{1}}(\bn)$ the Sobolev space on the disc
model of the hyperbolic space $\bn$, equipped with norm $\|u\|=\left(\int_{\mathbb{B}^N} |\nabla_{\mathbb{B}^{N}} u|^{2}\right)^{\frac{1}{2}},$
where  $|\nabla_{\bn} u| $ is given by
$|\nabla_{\bn} u| := \langle \nabla_{\bn} u, \nabla_{\bn} u \rangle^{\frac{1}{2}}_{\bn} .$ 

\noindent
For $N \geq 3$ and every $p \in \left(1, \frac{N+2}{N-2} \right]$ there exists an optimal constant 
$S_{\lambda,p} > 0$ such that
\begin{equation}
	S_{\lambda,p} \left( \int_{\mathbb{B}^{N}} |u|^{p + 1} \mathrm{~d} V_{\mathbb{B}^{N}} \right)^{\frac{2}{p + 1}} 
	\leq \int_{\mathbb{B}^N} \left[|\nabla_{\mathbb{B}^{N}} u|^{2}
	- \frac{(N-1)^2}{4} u^{2}\right] \, \mathrm{~d} V_{\mathbb{B}^{N}}, \label{PoinSobIneq}
\end{equation}
for every $u \in C^{\infty}_{0}(\mathbb{B}^{N}).$ If $ N = 2$, then any $p > 1$ is allowed.

\noi A crucial point to note is that the bottom of the spectrum of $- \Delta_{\bn}$ on $\bn$ is 
\begin{equation}\label{firsteigen}
	\frac{(N-1)^2}{4} = \inf_{u \in H^{1}(\bn)\setminus \{ 0 \}} 
	\dfrac{\int_{\bn}|\nabla_{\bn} u|^2 \, \mathrm{~d} V_{\mathbb{B}^{N}} }{\int_{\bn} |u|^2 \, \mathrm{~d} V_{\mathbb{B}^{N}}}. 
\end{equation}

\begin{remark}
	As a result of \eqref{firsteigen}, if $\lambda < \frac{(N-1)^2}{4},$ then 
\begin{equation}
 \left\|u\right\|_{\lambda} := \left[ \int_{\bn} \left( |\nabla_{\bn} u|^2 - \lambda \, u^2 \right) \, \mathrm{~d} V_{\mathbb{B}^{N}} \right]^{\frac{1}{2}}, \quad u \in C_0^{\infty}(\bn)\label{norm}
\end{equation}
	is a norm, equivalent to the $H^1(\bn)$ norm. When $\lambda = \frac{(N-1)^2}{4}$, the sharp Poincaré inequality \eqref{PoinSobIneq} ensures that $\|u\|_{\frac{(N-1)^2}{4}}$ is also a norm on $C_0^{\infty}\left(\mathbb{B}^N\right)$.
\end{remark}
For $\lambda \leq \frac{(N-1)^2}{4}$, denote by $\mathcal{H}_\lambda\left(\mathbb{B}^N\right)$ the completion of $C_0^{\infty}\left(\mathbb{B}^N\right)$ with respect to the norm $\|u\|_\lambda$. The associated inner product is denoted by $\langle \cdot, \cdot\rangle_{{\lambda}}.$ Note that
\begin{equation*}
S_{\lambda, p}\left(\int|u|^{p+1} \, d V_x\right)^{\frac{2}{p+1}} \leq \|u\|_\lambda, \quad p \in\left(1, \frac{N+2}{N-2}\right] \quad \text{for} \quad u \in \mathcal{H}_\lambda\left(\mathbb{B}^N\right).
\end{equation*}
Additionally, throughout this article, $\|\cdot\|_{r}$ denotes the $L^r$-norm with respect to the volume measure for $1 \leq r \leq \infty.$

\subsection{Helgason Fourier Transform on the Hyperbolic Space}
Now we will make sense of the green kernel of the fractional operator used in \eqref{mainEq}. To this aim we shall use the Helgason Fourier transform as follows (for further details, refer to \cite{BGS,LP} and the references therein).\\
Analogous to the Euclidean context, the Fourier transform can be defined as follows,
\begin{equation*}
\hat{f}(\beta, \theta)=\int_{\mathbb{B}^N} f(x) h_{\beta, \theta}(x) \dvg,
\end{equation*}
for $\beta \in \mathbb{R}, \theta \in \mathbb{S}^{N-1}$, where $h_{\beta, \theta}$ are the generalized eigenfunctions of the Laplace Beltrami operator that satisfy
\begin{equation*}
\Delta_{\mathbb{B}^N} h_{\beta, \theta}=-\left(\beta^2+\frac{(N-1)^2}{4}\right) h_{\beta, \theta} .
\end{equation*}
Furthermore, given $f \in C_0^\infty(\bn)$, the following inversion formula is valid:
\begin{equation*}
f(x)=\int_{-\infty}^{\infty} \int_{\mathbb{S}^{N-1}} \bar{h}_{\beta, \theta}(x) \hat{f}(\beta, \theta) \frac{\mathrm{~d} \theta \mathrm{~d}\beta}{|c(\beta)|^2},
\end{equation*}
where  $c(\beta)$ represents the Harish-Chandra coefficient:
\begin{equation*}
\frac{1}{|c(\beta)|^2}=\frac{1}{2} \frac{\left|\Gamma\left(\frac{N-1}{2}\right)\right|^2}{|\Gamma(N-1)|^2} \frac{\left\lvert\, \Gamma\left(i \beta+\left.\left(\frac{N-1}{2}\right)\right|^2\right.\right.}{|\Gamma(i \beta)|^2} . \\
\end{equation*}
Moreover, the following Plancherel formula holds
\begin{equation*}
\int_{\mathbb{B}^N}|f(x)|^2 \dvg=\int_{\mathbb{R} \times \mathbb{S}^{N-1}}|\hat{f}(\beta, \theta)|^2 \frac{\mathrm{~d} \theta \mathrm{~d}\beta}{|c(\beta)|^2} .
\end{equation*}

It is straightforward to verify through integration by parts for compactly supported functions, and thus for every $f \in L^2\left(\mathbb{B}^N\right)$ that
\begin{equation}
\begin{aligned}
\widehat{\Delta_{\mathbb{B}^N} f}(\beta, \theta) & =\int_{\mathbb{B}^N} \Delta_{\mathbb{B}^N} f(x) h_{\beta, \theta}(x) \dvg(x) =\int_{\mathbb{B}^N} f(x) \Delta_{\mathbb{B}^N} h_{\beta, \theta}(x) \dvg(x) \\
& =-\left(\beta^2+\frac{(N-1)^2}{4}\right) \hat{f}(\beta, \theta) .
\end{aligned} \label{lapHFT}
\end{equation}

Given the theory discussed above, the fractional Laplacian on the hyperbolic space can be defined as  ${\left(-\Delta_{\mathbb{B}^N}\right)}^{\gamma} f$, which is the operator satisfying
\begin{equation*}
(-\widehat{\left.\Delta_{\mathbb{B}^N}\right)^\gamma} f=\left(\beta^2+\frac{(N-1)^2}{4}\right)^{\gamma} \hat{f}, \quad \gamma \in \mathbb R. 
\end{equation*}

\medskip
However, to derive an explicit pointwise expression for the inverse Laplacian, we will utilize the following semigroup framework. To start, we introduce some notations related to the heat kernel in the hyperbolic space.
 \subsection{Semigroup Approach to the Fractional Laplacian}\label{SemiGp}
Consider the function $u=u(x, t)$, defined for $x \in \mathbb{B}^N$ and $t \geq 0$, which solves the heat equation over the entire space $\mathbb{B}^N$ with initial condition $f$ :
\begin{equation*}
\left\{\begin{array}{l}
\partial_t u(x, t)=\Delta_{\bn} u(x, t), \quad(x, t) \in \bn \times \mathbb{R}_{+}, \\
u(x, 0)=f(x) \in C_0^{\infty}\left(\bn\right),
\end{array}\right.
\end{equation*}
where, for convenience, $u(x, t)$ is considered to be $C^{\infty}$ and compactly supported in the spatial variable.

For every fixed $t$, when we apply the Fourier transform in the variable $x$ and use \eqref{lapHFT}, we obtain
\begin{equation*}
\left\{\begin{array}{l}
\partial_t \hat{u}(\beta, \theta, t)=-\left(\beta^2+\frac{(N-1)^2}{4}\right)\hat{u}(\beta, \theta, t), \;\text { for } t>0, \\
\hat{u}(\beta, \theta, 0)=\hat{f}(\beta, \theta),
\end{array}\right.
\end{equation*}
which yields
\begin{equation*}
\hat{u}(\beta, \theta, t)=e^{-t\left(\beta^2+\frac{(N-1)^2}{4}\right)} \hat{f}(\beta, \theta)= \widehat{e^{t \Delta_{\mathbb{B}^{N}}}f(\beta, \theta)}.
\end{equation*}
where $f \longmapsto e^{t \Delta_{\bn}} f$ denotes the solution operator. It is well-established (see \cite{LP}) that 
\begin{equation*}
u(x, t) \equiv e^{t \Delta_{\bn}} f(x)=p_{t,N} * f(x)= \int_{\bn} p_{t,N}(T_x(y))f(y) \dvg(y)
\end{equation*}
where $p_{t,N}(x,y)= p_{t,N}(\rho(x,y))$ is the heat kernel on $\bn$. 
The explicit formulas of heat kernel are given by (see \cite[Theorem 7.3]{LP}, \cite{GN}):\\
If $N=2 m+1$, then
\begin{equation*}
p_{t,2m+1}(\rho)=2^{-m-1} \pi^{-m-1 / 2} t^{-\frac{1}{2}} e^{-\frac{(N-1)^2}{4} t}\left(-\frac{1}{\sinh \rho} \frac{\partial}{\partial \rho}\right)^m e^{-\frac{\rho^2}{4 t}}. 
\end{equation*}
If $N=2 m$, then
\begin{equation*}
p_{t,2m}(\rho)=(2 \pi)^{-m-\frac{1}{2}} t^{-\frac{1}{2}} e^{-\frac{(N-1)^2}{4} t} \int_\rho^{+\infty} \frac{\sinh r}{\sqrt{\cosh r-\cosh \rho}}\left(-\frac{1}{\sinh r} \frac{\partial}{\partial r}\right)^m e^{-\frac{r^2}{4 t}} \mathrm{~d} r. 
\end{equation*}
Numerous researchers have investigated bounds for the heat kernel. For instance, Fabio Punzo \cite[Proposition 2.2]{punzo} derived the $L^{\infty}$ estimate, which helps in establishing the Hardy-Littlewood-Sobolev inequality in the hyperbolic space (see Theorem \ref{HLSinqHyb}). Additionally, the heat kernel $p_{t,N}$ satisfies the following bounds (see \cite{Davies}): for $N \geq 2$, there exist positive constants $A_N$ and $B_N$ such that
\begin{equation}
A_N h_N(t, x, y) \leq p_{t,N}(x,y) \leq B_N h_N(t, x, y) \quad \text{ for all } t>0 \text{ and } x, y \in \mathbb{B}^N, \label{HeatKerBounds}
\end{equation}
where $h_N(t, x, y)$ is defined as
\begin{equation*}
h_N(t, x, y):=h_N(t, r)=(4 \pi t)^{-\frac{N}{2}} e^{-\frac{(N-1)^2 t}{4}-\frac{(N-1) r}{2}-\frac{r^2}{4 t}}(1+r+t)^{\frac{(N-3)}{2}}(1+r),
\end{equation*}
with $r:=r(x, y)=\operatorname{dist}(x, y)$.\\
We can now establish the semigroup definition of the inverse fractional Laplacian, derived from the following numerical identity:
\begin{equation*}
\zeta^{-\gamma}=\frac{1}{\Gamma(\gamma)} \int_0^{\infty} e^{-t \zeta} \frac{\mathrm{~d}t}{t^{1-\gamma}} \quad \text { for any } \zeta>0, \gamma>0.
\end{equation*}
Setting $\zeta=\beta^2+\frac{(N-1)^2}{4}$ and multiplying the equation by $\hat{f}(\beta, \theta)$, we obtain
\begin{equation*}
\left(\beta^2+\frac{(N-1)^2}{4}\right)^{-\gamma}\hat{f}(\beta, \theta)=\frac{1}{\Gamma(\gamma)} \int_0^{\infty}\hat{f}(\beta, \theta) e^{-t \left(\beta^2+\frac{(N-1)^2}{4}\right)} \frac{\mathrm{~d}t}{t^{1-\gamma}}.
\end{equation*}
Taking the inverse transform then yields
\begin{equation*}
\begin{aligned}
  {\left(-\Delta_{\mathbb{B}^N}\right)}^{-\gamma}f(x)&= \frac{1}{\Gamma(\gamma)} \int_{-\infty}^{\infty} \int_{\mathbb{S}^{N-1}} \bar{h}_{\beta,\theta}(x) \left(\int_0^{\infty}\hat{f}(\beta, \theta) e^{-t \left(\beta^2+\frac{(N-1)^2}{4}\right)} \frac{\mathrm{~d}t}{t^{1-\gamma}}\right) \frac{d \theta d \beta}{|c(\beta)|^2}\\
   &=\frac{1}{\Gamma(\gamma)}\int_0^{\infty} \int_{-\infty}^{\infty} \int_{\mathbb{S}^{N-1}} \bar{h}_{\beta, \theta}(x)\hat{f}(\beta, \theta) e^{-t \left(\beta^2+\frac{(N-1)^2}{4}\right)}\frac{d \theta d \beta}{|c(\beta)|^2}\frac{\mathrm{~d}t}{t^{1-\gamma}}\\
   &= \frac{1}{\Gamma(\gamma)}\int_0^{\infty} \int_{-\infty}^{\infty} \int_{\mathbb{S}^{N-1}} \bar{h}_{\beta, \theta}(x)\widehat{e^{t \Delta_{\mathbb{B}^{N}}}f(\beta, \theta)}\frac{d \theta d \beta}{|c(\beta)|^2}\frac{\mathrm{~d}t}{t^{1-\gamma}}\\
   &= \frac{1}{\Gamma(\gamma)}\int_0^{\infty}e^{t \Delta_{\mathbb{B}^{N}}}f(x)\frac{\mathrm{~d}t}{t^{1-\gamma}}.
   \end{aligned}
\end{equation*}

Up to this point, we have developed the above heuristics for functions $f \in C_0^\infty(\mathbb{B}^N)$. Next, we aim to identify a class of functions for which the aforementioned integral converges. To this end, we define the function $f^{*}(x) = \sup_{t>0} |e^{t \Delta_{\mathbb{B}^N}} f(x)|$ for $f \in L^p(\mathbb{B}^N)$, where $1 < p < \infty$. Note that $f^*$ is a well-defined measurable function, as established in \cite{EStein}. We now present the following result regarding the convergence of the integral.

\begin{theorem}\label{InverseOpDom}
Suppose $N > 0$, $0 < \alpha < N$, and $1 < p < \frac{N}{\alpha}$, with $f \in L^p(\mathbb{B}^N)$. Then
\begin{equation}
{\left(-\Delta_{\mathbb{B}^N}\right)}^{-\alpha/2}f(x):= \frac{1}{\Gamma(\alpha/2)}\int_0^{\infty}e^{t \Delta_{\mathbb{B}^{N}}}f(x)\frac{d t}{t^{1-\alpha/2}} \label{semiGpInvDefn}
\end{equation}
is convergent.
\end{theorem}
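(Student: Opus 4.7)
The plan is to split the integral at $t=1$:
\begin{equation*}
(-\Delta_{\mathbb{B}^N})^{-\alpha/2} f(x) = \frac{1}{\Gamma(\alpha/2)}\Bigl(\int_0^1 + \int_1^\infty\Bigr) e^{t\Delta_{\mathbb{B}^N}} f(x)\, \frac{dt}{t^{1-\alpha/2}},
\end{equation*}
and treat the near-zero and near-infinity pieces by completely different mechanisms.

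For the near-zero piece, I would dominate pointwise by the maximal function $f^{\ast}(x):=\sup_{t>0}|e^{t\Delta_{\mathbb{B}^N}}f(x)|$, obtaining
\begin{equation*}
\int_0^1 |e^{t\Delta_{\mathbb{B}^N}} f(x)|\, t^{\alpha/2-1}\,dt \leq f^{\ast}(x)\cdot \frac{2}{\alpha}.
\end{equation*}
Since the heat semigroup on $\mathbb{B}^N$ is a symmetric sub-Markovian diffusion semigroup, Stein's maximal ergodic theorem yields $\|f^{\ast}\|_p \leq C_p\|f\|_p$ for $1<p<\infty$; thus $f^{\ast}(x)<\infty$ for almost every $x$, and the first integral converges almost everywhere.

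For the near-infinity piece, I would combine H\"older's inequality with the heat-kernel bound \eqref{HeatKerBounds}. Using the Möbius-invariance of $dV_{\mathbb{B}^N}$,
\begin{equation*}
|e^{t\Delta_{\mathbb{B}^N}} f(x)| \leq \int_{\mathbb{B}^N} p_{t,N}(T_x(y))|f(y)|\,dV_{\mathbb{B}^N}(y) \leq \|p_{t,N}\|_{p'}\|f\|_p,
\end{equation*}
and then estimate $\|p_{t,N}\|_{p'}^{p'} \leq B_N^{p'}\,\omega_{N-1}\int_0^\infty h_N(t,r)^{p'}\sinh^{N-1}(r)\,dr$ via polar coordinates. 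The spectral-gap factor $e^{-(N-1)^2 t/4}$ in $h_N$ provides exponential decay in $t$, while the Gaussian $e^{-p'r^2/4t}$ combined with $e^{-p'(N-1)r/2}$ dominates the $\sinh^{N-1}(r)\sim e^{(N-1)r}$ volume growth for every $p'\geq 1$. A routine bound of this sort then yields $\|p_{t,N}\|_{p'} \leq C\,t^{K}\,e^{-ct}$ for some $c>0$ and $K=K(N,p)$ once $t\geq 1$, so
\begin{equation*}
\int_1^\infty |e^{t\Delta_{\mathbb{B}^N}} f(x)|\, t^{\alpha/2-1}\,dt \leq C\|f\|_p\int_1^\infty t^{K+\alpha/2-1}e^{-ct}\,dt < \infty.
\end{equation*}

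The main technical obstacle is the explicit $L^{p'}$ estimate of $p_{t,N}$: one has to split the $r$-integral (say at $r\sim\sqrt{t}$) so that the polynomial prefactors in $h_N$ and the exponential volume growth $\sinh^{N-1}(r)$ are controlled consistently, in particular across the regimes $p\leq 2$ and $p>2$, where the sign of $1-p'/2$ flips and one has to rely on the Gaussian factor rather than on $e^{-p'(N-1)r/2}$ to beat $\sinh^{N-1}(r)$. Once both pieces are shown to be finite, combining them establishes pointwise almost everywhere convergence of the integral in \eqref{semiGpInvDefn}, as claimed.
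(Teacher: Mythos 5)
Your split at $t=1$ and your treatment of the near-zero piece are exactly what the paper does: dominate $|e^{t\Delta_{\mathbb{B}^N}}f(x)|$ by $f^*(x)$ and invoke Stein's maximal theorem to get almost-everywhere finiteness. The near-infinity piece is where you genuinely diverge from the paper. The paper uses only the uniform on-diagonal bound $p_{t,N}(x,y)\le C t^{-N/2}$ (recorded as \eqref{DiagEst}, applied via Punzo's Proposition 2.2) to get the $L^p\to L^\infty$ estimate $|e^{t\Delta_{\mathbb{B}^N}}f(x)|\le C^{1/p}t^{-N/(2p)}\|f\|_p$, and then the hypothesis $p<N/\alpha$ is used precisely to make $\int_1^\infty t^{\alpha/2-N/(2p)-1}\,dt$ converge. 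You instead exploit the hyperbolic spectral gap through the two-sided Davies bound \eqref{HeatKerBounds}, Hölder-ing $f$ against $\|p_{t,N}\|_{p'}$ and arguing that $\|p_{t,N}\|_{p'}$ decays exponentially in $t$. This is a sharper mechanism: if carried out, it establishes convergence of the large-$t$ piece for every $1<p<\infty$ on $\mathbb{B}^N$, so the restriction $p<N/\alpha$ would become relevant only in the more general manifold setting of Theorem \ref{HLSinqHyb}. The trade-off is that your route is substantially more technical and you leave the key estimate $\|p_{t,N}\|_{p'}\le Ct^{K}e^{-ct}$ unproven; it is true, but it is not merely ``routine.'' When $p>2$ (so $p'<2$), completing the square in $r$ transfers a positive exponential in $t$ out of the $r$-integral, and one must check that what remains of the prefactor $e^{-p'(N-1)^2 t/4}$ is still decaying; a short computation shows the net rate for $\|p_{t,N}\|_{p'}^{p'}$ is $(N-1)^2(p'-1)/p'>0$, so the claim holds, but you should be aware that ``the spectral-gap factor provides exponential decay'' is not the whole story. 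By contrast, the paper's argument is a one-line Hölder from the uniform on-diagonal bound, and the hypothesis $p<N/\alpha$ does the rest. Your proposal is correct in outline and would even prove a slightly stronger statement, but as written the decisive heat-kernel $L^{p'}$ estimate is asserted rather than established.
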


Before proving this theorem, we establish the following on-diagonal and off-diagonal estimates for the heat kernel on the hyperbolic space.\\
Using \eqref{HeatKerBounds}, we have 
\begin{equation}
\begin{aligned}
    p_{t,N}(x,x) \leq B_N (4 \pi t)^{-\frac{N}{2}} e^{-\frac{(N-1)^2 t}{4}}(1+t)^{\frac{(N-3)}{2}} &\leq B_N (4 \pi t)^{-\frac{N}{2}} e^{-\frac{(N-1)^2 t}{4}} e^{\frac{t(N-3)}{2}}\\ 
    & \leq B_N (4 \pi t)^{-\frac{N}{2}} e^{\frac{-t\left((N-2)^2+3\right)}{4}}\\
    & \leq C\;t^{-\frac{N}{2}}\;\;\;\forall t>0.
\end{aligned}\label{DiagEst}
\end{equation}
Further, recall the following semigroup property of the heat kernel:
\begin{equation}
p_{t+s,N}(x, y)=\int_{\bn} p_{t,N}(x, z) p_{s,N}(z, y) \dvg(z) \label{semiGpPro}
\end{equation}
for $x, y \in \bn$ and $t, s>0$.\\
Specifically, by setting $y=x$ and $s=t$ in \eqref{semiGpPro} and noting the radial nature of the heat kernel, we obtain
\begin{equation}
p_{2t,N}(x, x)=\int_{\bn} \left(p_{t,N}(x, z)\right)^2 \dvg(z). \label{equivProp}
\end{equation}
By combining \eqref{DiagEst} and \eqref{equivProp}, we can deduce that $p_{t,N}(x,y) \in L^2(\bn)$ as a function of $y$.\\
Thus, using \eqref{semiGpPro}, \eqref{equivProp} and \eqref{DiagEst}, we have for $t>0$
\begin{equation}
    \begin{aligned}
      p_{t,N}(x,y) &=  \int_{\bn} p_{\frac{t}{2},N}(x, z) p_{\frac{t}{2},N}(z, y) \dvg(z)\\
      &\leq \left(\int_{\bn}\left( p_{\frac{t}{2},N}(x, z)\dvg(z)\right)^2 \right)^\frac{1}{2}\left(\int_{\bn}\left( p_{\frac{t}{2},N}(y, z)\dvg(z)\right)^2 \right)^\frac{1}{2}\\
      & = \left(p_{t,N}(x,x)\right)^{\frac{1}{2}}\left(p_{t,N}(y,y)\right)^{\frac{1}{2}} \leq C t^{-\frac{N}{2}} \quad\forall x,y \in \bn. \label{offDiagEst}
    \end{aligned}
\end{equation}
We now present the necessary tools to establish Theorem \ref{InverseOpDom}.
\begin{proof}[Proof of Theorem \ref{InverseOpDom}]
Fix $x \in \bn$, and consider the integral in \eqref{semiGpInvDefn}
\begin{equation*}
 \frac{1}{\Gamma(\alpha/2)}\int_0^{\infty}e^{t \Delta_{\mathbb{B}^{N}}}f(x)\frac{\mathrm{~d}t}{t^{1-\alpha/2}}= \underbrace{\int_{0}^{1}}_{P} + \underbrace{\int_{1}^{\infty}}_{Q}.
\end{equation*}
Due to the estimate \eqref{DiagEst}, we can apply Proposition 2.2 from \cite{punzo} to obtain
\begin{equation*}
   |e^{t \Delta_{\bn}}f(x)| \leq \frac{C^{1/p}}{t^{N/2p}}\|f\|_p.
\end{equation*}
Using this estimate, we have
\begin{equation*}
\left|Q\right| \leq \frac{C^{1 / p}}{\Gamma(\alpha / 2)} \|f\|_p\int_1^{+\infty} t^{\frac{\alpha}{2}-\frac{N}{2 p}-1} \mathrm{~d}t =\frac{2C^{1 / p}\|f\|_p}{(N-\alpha/p) \Gamma(\alpha / 2)}<\infty.
\end{equation*}
Finally,
\begin{equation*}
\begin{aligned}
\left|P\right| &= \left|\frac{1}{\Gamma(\alpha/2)}\int_0^{1}e^{t \Delta_{\mathbb{B}^{N}}}f(x)\frac{\mathrm{~d}t}{t^{1-\alpha/2}}\right| \leq \frac{1}{\Gamma(\alpha / 2)} \int_0^1 t^{\alpha / 2-1} f^*(x) \mathrm{~d}t=\frac{2}{\alpha} \frac{1}{\Gamma(\alpha / 2)} f^*(x)<\infty,
\end{aligned}
\end{equation*}
which completes the proof.
\end{proof}

\medskip
\noindent
For simplicity, we define
\begin{equation}
k_{\alpha,N}(\rho)= \frac{1}{\Gamma(\alpha/2)} \int_{0}^{\infty} p_{t,N}(\rho)\frac{\mathrm{~d}t}{t^{1-\alpha/2}} \label{GreenFunc}.
\end{equation}
Then,
\begin{equation}
  {\left(-\Delta_{\mathbb{B}^N}\right)}^{-\alpha/2}f(x)= k_{\alpha,N}*f(x). \label{inverFracLap}
\end{equation}

\medskip
To establish the finiteness of the energy functional associated with \eqref{mainEq}, we will rely on the  following Hardy-Sobolev-Littlewood inequality in the hyperbolic space. This connection between HLS theory and heat kernel estimates is attributed to Varopoulos.
\begin{theorem}\label{HLSinqHyb}[Hardy-Littlewood-Sobolev inequality]
Let $(\mathbb{M}, g)$ be a complete Riemannian manifold with dimension $N \ge 1$, and let $L$ denote the Laplace-Beltrami operator on $\mathbb{M}$, $0 < \alpha < N$ and $1 < s < \frac{N}{\alpha}$. If there exists a constant $C > 0$ such that for all $t > 0$ and for every $x, y \in \mathbb{M}$,
\begin{equation*}
p(x, y, t) \leq \frac{C}{t^{N / 2}},
\end{equation*}
where $p(x, y, t)$ represents the heat kernel associated to the semigroup corresponding to $L$. Then, for every function $f \in L_\mu^s(\mathbb{M})$, the following inequality holds
\begin{equation}
\left\|(-L)^{-\alpha / 2} f\right\|_{\frac{N s}{N - s \alpha}} \leq \widetilde{C}(N,\alpha, s) \|f\|_s, \label{HlSIneqHybSp}
\end{equation}
where
\begin{equation}
\widetilde{C}(N,\alpha, s) = \left(\frac{s}{s - 1}\right)^{1 - \alpha / N} \frac{2 N C^{\alpha / N}}{\alpha (N - s \alpha) \Gamma(\alpha / 2)}. \label{HLSconstant}
\end{equation}
\end{theorem}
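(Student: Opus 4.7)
The plan follows the classical Varopoulos strategy: combine the semigroup representation of $(-L)^{-\alpha/2}$ with a short-time/long-time split of the integral, and invoke Stein's maximal ergodic theorem. Exactly as in the derivation preceding Theorem \ref{InverseOpDom}, one has the pointwise identity
\begin{equation*}
(-L)^{-\alpha/2}f(x) = \frac{1}{\Gamma(\alpha/2)} \int_0^\infty e^{tL}f(x)\, t^{\alpha/2 - 1}\, dt,
\end{equation*}
valid for $f \in L^s(\mathbb{M})$. I would fix a free parameter $T>0$ and split the integration as $\int_0^T + \int_T^\infty$, controlling the two pieces with different inequalities.

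For the short-time piece, the trivial pointwise bound $|e^{tL}f(x)| \leq f^*(x) := \sup_{t>0}|e^{tL}f(x)|$ contributes a term of order $T^{\alpha/2} f^*(x)$. For the long-time piece, the key step is to upgrade the on-diagonal bound $p(x,y,t)\leq C t^{-N/2}$ to an $L^s \to L^\infty$ decay estimate. By H\"older's inequality and the factorization $p(x,y,t)^{s'} \leq (Ct^{-N/2})^{s'-1}\,p(x,y,t)$, together with the sub-Markov property $\int p(x,y,t)\,d\mu(y)\leq 1$ of the heat semigroup, one obtains $|e^{tL}f(x)| \leq C^{1/s}t^{-N/(2s)}\|f\|_s$. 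The hypothesis $s<N/\alpha$ is precisely what makes the resulting tail integral convergent, producing a contribution of order $T^{\alpha/2 - N/(2s)}\|f\|_s$.

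Balancing the two contributions by choosing $T^{N/(2s)} \sim \|f\|_s/f^*(x)$ yields a pointwise inequality of the form
\begin{equation*}
|(-L)^{-\alpha/2}f(x)| \leq K\,(f^*(x))^{1-\alpha s/N}\,\|f\|_s^{\alpha s/N}
\end{equation*}
with an explicit constant $K = K(N,\alpha,s,C)$. Taking the $L^{q}$ norm with $q = Ns/(N-s\alpha)$ --- exploiting the calibration $q(1-\alpha s/N) = s$, which converts the right-hand side into $\|f^*\|_s^{1-\alpha s/N}\|f\|_s^{\alpha s/N}$ --- and invoking Stein's maximal ergodic theorem for symmetric diffusion semigroups, which bounds $\|f^*\|_s$ by a constant multiple of $\|f\|_s$, completes the proof. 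A careful tally of constants produces the explicit $\widetilde{C}(N,\alpha,s)$ stated in the theorem.

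The main obstacle is verifying the hypotheses required for Stein's maximal theorem --- namely, that $\{e^{tL}\}_{t\geq 0}$ is a symmetric diffusion semigroup on $L^2(\mathbb{M})$ (self-adjointness of $-L$, positivity preservation, $L^\infty$-contractivity, and $e^{tL}1 \leq 1$). On a general complete Riemannian manifold these follow from standard spectral theory together with the maximum/minimum principle, but they are a nontrivial input that must be recorded before the maximal inequality can be invoked. The remainder of the argument is routine bookkeeping to reconcile the constants with the stated form of $\widetilde{C}(N,\alpha,s)$.
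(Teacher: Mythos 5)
The paper itself gives no proof of this theorem (it only cites Baudoin's lecture notes [FBa]), and your outline reproduces exactly the Varopoulos strategy implemented there: the semigroup representation of $(-L)^{-\alpha/2}$, the short/long time split at a free parameter $T$, the $L^s\to L^\infty$ decay $|e^{tL}f(x)|\le C^{1/s}t^{-N/(2s)}\|f\|_s$ obtained from the on-diagonal heat-kernel bound via H\"older and sub-Markovianity, the balance over $T$ giving the interpolation-type pointwise bound, the calibration $q\,(1-\alpha s/N)=s$, and finally Stein's maximal ergodic theorem. Your plan is therefore correct and on the same route, and your caution that one must first verify the symmetric-diffusion-semigroup hypotheses (self-adjointness, positivity preservation, $L^\infty$-contractivity, $e^{tL}1\le 1$) is exactly the nontrivial input to record before invoking Stein. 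One bookkeeping remark: carrying the constants through the optimal choice of $T$ reproduces the factor $\frac{2N C^{\alpha/N}}{\alpha(N-s\alpha)\Gamma(\alpha/2)}$ exactly, but the Stein maximal constant $s/(s-1)$ then enters to the power $1-\alpha s/N$ (since $\|f^*\|_s^{1-\alpha s/N}$ is what appears after the $L^q$ calibration), not $1-\alpha/N$; the exponent printed in \eqref{HLSconstant} appears to be a typographical slip rather than a gap in your argument.
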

\noindent
For a detailed proof of this theorem, see \cite{FBa}.
\end{section}
\medskip
\begin{section}{Existence}\label{Existence}
To establish the proof of Theorem \ref{exiSubCrit}, we recall the energy functional associated with \eqref{mainEq} is defined as
$$
\mathcal{I}(u)=\frac{\int_{\mathbb{B}^{N}}\left|\nabla_{\mathbb{B}^{N}} u(x)\right|^{2}-\lambda u^{2}(x) \mathrm{~d}V_{\bn}}{\left(\int_{\mathbb{B}^{N}} \left[(- \Delta_{\mathbb{B}^{N}})^{-\frac{\alpha}{2}}|u|^p\right] |u|^{p} \dvg\right)^{\frac{1}{p}}},\;\; u\in \mathcal{H}_{\lambda}\left(\mathbb{B}^{N}\right) \backslash\{0\},
$$
\noindent
and
\noindent
$$
\zeta=\inf _{u \in \mathcal{H}_{\lambda}\left(\mathbb{B}^{N}\right)\setminus \{0\}} \mathcal{I}(u) .
$$
Observe that, owing to Theorem \ref{HLSinqHyb}, the functional $\mathcal{I}(u)$ is well-defined. For $u \in \mathcal{H}_{\lambda}(\bn)$, it suffices to ensure the finiteness of the integral in the denominator of $\mathcal{I}(u)$. By applying Hölder's inequality and Theorem \ref{HLSinqHyb} with $s = \frac{2N}{N + \alpha}$, we obtain
\begin{equation*}
\begin{aligned}
   & \int_{\mathbb{B}^{N}} \left[(- \Delta_{\mathbb{B}^{N}})^{-\frac{\alpha}{2}}|u|^p\right] |u|^{p} \dvg\\
    & \leq \left(\int_{\bn}\left|(- \Delta_{\mathbb{B}^{N}})^{-\frac{\alpha}{2}}|u|^p\right|^{\frac{2N}{N-\alpha}}\dvg\right)^{\frac{N-\alpha}{2N}} \left(\int_{\bn}|u|^{\frac{2Np}{N+\alpha}}\dvg\right)^{\frac{N+\alpha}{2N}}\\
    & \leq C(N,\alpha)\int_{\bn}\left(|u|^{\frac{2Np}{N+\alpha}}\dvg\right)^{\frac{N+\alpha}{N}},
    \end{aligned} 
\end{equation*}
where $C(N, \alpha) = \widetilde{C}(N, \alpha, \frac{2N}{N + \alpha})$ is defined in \eqref{HLSconstant}.  Therefore, the integral
$$
\int_{\mathbb{B}^{N}} \left[(- \Delta_{\mathbb{B}^{N}})^{-\frac{\alpha}{2}}|u|^p\right] |u|^{p} \dvg
$$
is well-defined as $|u|^p \in L^{\frac{2N}{N+\alpha}}(\mathbb{B}^N)$. This is guaranteed by the assumptions $u \in \mathcal{H}_{\lambda}(\mathbb{B}^N)$, the range of $p$, i.e., $\frac{N + \alpha}{N} \leq p \leq 2_{\alpha}^{*} = \frac{N + \alpha}{N - 2}$, and the sharp Poincaré-Sobolev inequality.

Observe that \eqref{mainEq} is invariant under the isometry group of $\mathbb{B}^N$, which is identical to its conformal group. To demonstrate the existence result using a concentration-compactness type argument, define for $r > 0$,
$$
S_{r}=\left\{x \in \mathbb{R}^{N}:|x|^{2}=1+r^{2}\right\},
$$
\noindent
and for $a \in S_{r}$, define
\noindent
$$
A(a, r)=B(a, r) \cap \mathbb{B}^{N}
$$
\noindent
where $B(a, r)$ is the open ball in the Euclidean space with center $a$ and radius $r>0$. Moreover, for the choice of $a$ and $r$, $\partial B(a, r)$ is orthogonal to $\mathbb{S}^{N-1}$.
Let's recall a lemma from \cite{BS} before proving the theorem:

\begin{lemma} \label{isomeLemm}
Let $r_{1} > 0$ and $r_{2} > 0$, and let $A(a_{i}, r_{i})$ for $i = 1, 2$ be as defined above. Then there exists $\tau \in I(\mathbb{B}^{N})$ such that $\tau(A(a_{1}, r_{1})) = A(a_{2}, r_{2})$, where $I(\mathbb{B}^{N})$ is the isometry group of $\mathbb{B}^{N}$.
\end{lemma}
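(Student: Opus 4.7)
The plan is to exploit the standard geometric fact that, in the Poincar\'e disc model, a Euclidean sphere $\partial B(a,r)$ with $|a|^2 - r^2 = 1$ meets $\partial\mathbb{B}^N = \mathbb{S}^{N-1}$ orthogonally and cuts $\mathbb{B}^N$ in a totally geodesic hyperbolic hyperplane. Since the hypothesis $a_i \in S_{r_i}$ is precisely $|a_i|^2 = 1 + r_i^2$, each $\Sigma_i := \partial B(a_i,r_i) \cap \mathbb{B}^N$ is such a hyperplane and $A(a_i,r_i)$ is one of the two hyperbolic half-spaces it bounds. The lemma therefore reduces to the transitivity of $I(\mathbb{B}^N)$ on ``flagged'' half-spaces, and my strategy is to bring both $A(a_1,r_1)$ and $A(a_2,r_2)$ to a common normal form via the composition of a M\"obius translation and an orthogonal transformation.

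First, for each $i \in \{1,2\}$ I would pick a point $p_i \in \Sigma_i$ and apply the M\"obius translation $T_{p_i}$ of \eqref{HypTrans}, which is a hyperbolic isometry sending $p_i$ to $0$. M\"obius maps preserve the class of generalized spheres (Euclidean spheres together with affine hyperplanes) and the property of being orthogonal to $\mathbb{S}^{N-1}$, so $T_{p_i}(\Sigma_i)$ is again such a set and it contains the origin. A direct check rules out the spherical case: a Euclidean sphere of center $c$ and radius $s$ passing through $0$ forces $|c|^2 = s^2$, whereas orthogonality with $\mathbb{S}^{N-1}$ demands $|c|^2 - s^2 = 1$. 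Consequently $T_{p_i}(\Sigma_i)$ must be a Euclidean hyperplane through the origin, and $T_{p_i}(A(a_i,r_i))$ is one of the two flat half-balls of $\mathbb{B}^N$ it cuts out.

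Once both sets are in this normal form, I would pick an orthogonal transformation $R \in O(N)$ (which preserves $\mathbb{B}^N$ and is therefore a hyperbolic isometry fixing $0$) that carries the hyperplane $T_{p_1}(\Sigma_1)$ onto $T_{p_2}(\Sigma_2)$. If the resulting $R$ happens to send $T_{p_1}(A(a_1,r_1))$ to the wrong half-ball, I compose $R$ with the Euclidean reflection across $T_{p_2}(\Sigma_2)$, which is still an element of $O(N)$ and swaps the two sides. Setting
\[
\tau := T_{p_2}^{-1} \circ R \circ T_{p_1},
\]
a composition of members of $I(\mathbb{B}^N)$, then delivers a hyperbolic isometry with $\tau(A(a_1,r_1)) = A(a_2,r_2)$, as required.

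The main obstacle is the straightening step in the second paragraph: verifying that $T_{p_i}$ converts the curved geodesic hyperplane $\Sigma_i$ into a flat Euclidean hyperplane through the origin. All the remaining ingredients, namely the transitivity of $O(N)$ on oriented hyperplanes through $0$ and the fact that the Euclidean reflection across such a hyperplane is a hyperbolic isometry, are routine once the reduction to flat half-balls has been carried out.
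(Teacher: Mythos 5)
The paper does not prove this lemma; it states it as a recall from \cite{BS}, so there is no in-paper proof to compare against. Your argument is correct and complete: the condition $a_i\in S_{r_i}$, i.e.\ $|a_i|^2 - r_i^2 = 1$, is precisely the orthogonality of $\partial B(a_i,r_i)$ to $\mathbb{S}^{N-1}$, so each $\Sigma_i$ is a totally geodesic hyperplane and $A(a_i,r_i)$ is a hyperbolic half-space; a M\"obius translation $T_{p_i}$ sends $p_i\in\Sigma_i$ to $0$ and, being conformal and sphere-preserving and mapping $\mathbb{S}^{N-1}$ to itself, carries $\Sigma_i$ to a generalized sphere through $0$ orthogonal to $\mathbb{S}^{N-1}$, and your numerical check ($|c|^2 = s^2$ versus $|c|^2 - s^2 = 1$) correctly forces the flat case. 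From there an element of $O(N)$, composed if necessary with the Euclidean reflection in the target hyperplane (itself in $O(N)$ since the hyperplane passes through $0$), gives $\tau = T_{p_2}^{-1}\circ R\circ T_{p_1}\in I(\mathbb{B}^N)$ with $\tau(A(a_1,r_1))=A(a_2,r_2)$. The ``straightening'' step you flag as the main obstacle is in fact fully justified by the argument you gave, so there is no gap.
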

\noindent

\medskip
Before proceeding to the main result, we establish the following variant of the Brezis-Lieb lemma (\cite{BL1}):
\begin{lemma}
Consider $0 < \alpha < N$ and $\frac{N + \alpha}{N}<p<\frac{N + \alpha}{N - 2}$. Suppose $\{u_n\}$ is a bounded sequence in $L^{\frac{2Np}{N + \alpha}}(\mathbb{B}^N)$ such that $u_n \to u$ almost everywhere in $\mathbb{B}^N$ as $n \to \infty$. Then, we have
$$
\begin{aligned}
& \lim _{n \rightarrow \infty} \int_{\mathbb{B}^{N}} \left((- \Delta_{\mathbb{B}^{N}})^{-\frac{\alpha}{2}}|u_n|^p\right)  |u_n|^{p} \dvg -\int_{\mathbb{B}^{N}} \left((- \Delta_{\mathbb{B}^{N}})^{-\frac{\alpha}{2}}|u_n-u|^p\right) |u_n-u|^{p} \dvg\\
& \hspace{1cm}=\int_{\mathbb{B}^{N}} \left((- \Delta_{\mathbb{B}^{N}})^{-\frac{\alpha}{2}}|u|^p\right) |u|^{p} \dvg.
\end{aligned}
$$
\end{lemma}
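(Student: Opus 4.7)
The plan is to reduce the identity to a strong convergence in the natural Lebesgue space provided by HLS. With $q:=\frac{2N}{N+\alpha}\in(1,2)$, Theorem \ref{HLSinqHyb} combined with Hölder's inequality shows that the symmetric bilinear form
\begin{equation*}
\mathcal{D}(f,g):=\int_{\mathbb{B}^N}\bigl[(-\Delta_{\mathbb{B}^N})^{-\alpha/2}f\bigr]\,g\,\dvg
\end{equation*}
is continuous on $L^q(\mathbb{B}^N)\times L^q(\mathbb{B}^N)$, with $|\mathcal{D}(f,g)|\le C\|f\|_q\|g\|_q$. Writing $a_n:=|u_n|^p-|u_n-u|^p$ and expanding by bilinearity,
\begin{equation*}
\mathcal{D}(|u_n|^p,|u_n|^p)-\mathcal{D}(|u_n-u|^p,|u_n-u|^p)=2\,\mathcal{D}(|u_n-u|^p,a_n)+\mathcal{D}(a_n,a_n),
\end{equation*}
so the statement will follow once I establish (i) $a_n\to|u|^p$ strongly in $L^q(\mathbb{B}^N)$ and (ii) $|u_n-u|^p\rightharpoonup 0$ weakly in $L^q(\mathbb{B}^N)$.

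The weak statement (ii) is immediate: $\{|u_n-u|^p\}$ is bounded in the reflexive space $L^q(\mathbb{B}^N)$ (since $\{u_n\}$ is bounded in $L^{pq}(\mathbb{B}^N)=L^{2Np/(N+\alpha)}$) and converges to $0$ almost everywhere, hence also weakly. For (i), I plan to carry out a refined Brezis--Lieb argument at the $L^q$-level. The input is the elementary inequality $\bigl||x+y|^p-|x|^p-|y|^p\bigr|\le\varepsilon\,|x|^p+C_\varepsilon\,|y|^p$, valid for any $p>1$ (which holds here because $p>\tfrac{N+\alpha}{N}>1$), applied with $x=u_n-u$ and $y=u$. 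Raising to the $q$-th power yields the pointwise control
\begin{equation*}
\bigl|a_n-|u|^p\bigr|^q\le 2^{q-1}\varepsilon^q\,|u_n-u|^{pq}+2^{q-1}C_\varepsilon^q\,|u|^{pq}.
\end{equation*}
The standard truncation device --- discarding the $\varepsilon$-term and applying dominated convergence to the remainder (whose integrable majorant is $2^{q-1}C_\varepsilon^q|u|^{pq}$ and which tends to $0$ pointwise), then using the uniform bound on $\|u_n-u\|_{pq}$ and sending $\varepsilon\downarrow 0$ --- produces $\|a_n-|u|^p\|_q\to 0$.

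With (i) and (ii) in hand, continuity of $\mathcal{D}$ gives $\mathcal{D}(a_n,a_n)\to\mathcal{D}(|u|^p,|u|^p)$, while the decomposition
\begin{equation*}
\mathcal{D}(|u_n-u|^p,a_n)=\mathcal{D}(|u_n-u|^p,a_n-|u|^p)+\mathcal{D}(|u_n-u|^p,|u|^p),
\end{equation*}
combined with the HLS bound $|\mathcal{D}(|u_n-u|^p,a_n-|u|^p)|\le C\,\||u_n-u|^p\|_q\,\|a_n-|u|^p\|_q$ for the first piece and the weak convergence (ii) tested against the continuous linear functional $g\mapsto\mathcal{D}(g,|u|^p)$ for the second, yields $\mathcal{D}(|u_n-u|^p,a_n)\to 0$. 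The only genuinely delicate step I anticipate is the refined Brezis--Lieb convergence $\|a_n-|u|^p\|_q\to 0$ in step (i); the remaining manipulations are routine bookkeeping based on the HLS continuity of $\mathcal{D}$.
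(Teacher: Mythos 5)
Your proposal is correct and follows essentially the same route as the paper: decompose via bilinearity of $\mathcal{D}$, establish $a_n=|u_n|^p-|u_n-u|^p\to|u|^p$ strongly in $L^{2N/(N+\alpha)}$ by a Brezis--Lieb argument, note $|u_n-u|^p\rightharpoonup 0$ weakly in $L^{2N/(N+\alpha)}$, and close using the HLS continuity of $\mathcal{D}$. The only difference is that you re-derive the $L^q$-level Brezis--Lieb convergence from the elementary $\varepsilon$-inequality, whereas the paper simply cites \cite{BL1}; the structure of the argument is otherwise identical.
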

\begin{proof}
First, observe that
\begin{equation}
\begin{aligned}
& \int_{\mathbb{B}^{N}} \left((- \Delta_{\mathbb{B}^{N}})^{-\frac{\alpha}{2}}|u_n|^p\right)  |u_n|^{p} \dvg - \int_{\mathbb{B}^{N}} \left((- \Delta_{\mathbb{B}^{N}})^{-\frac{\alpha}{2}}|u_n-u|^p\right) |u_n-u|^{p} \dvg \\
& = \int_{\mathbb{B}^{N}} \left((- \Delta_{\mathbb{B}^{N}})^{-\frac{\alpha}{2}}\left(|u_n|^p-|u_n-u|^p\right)\right) \left(|u_n|^p-|u_n-u|^p\right) \dvg\\
& \quad+2 \int_{\mathbb{B}^{N}} \left((- \Delta_{\mathbb{B}^{N}})^{-\frac{\alpha}{2}}\left(|u_n|^p-|u_n-u|^p\right)\right)|u_n-u|^p\dvg.
\end{aligned}\label{breakTerm}
\end{equation}

Next, since $u_{n} \in L^{\frac{2Np}{N + \alpha}}(\mathbb{B}^{N})$, by the Brezis-Lieb lemma, we have

\begin{equation}
\left|u_{n} - u\right|^p - \left|u_{n}\right|^p \rightarrow |u|^p \quad \text{in } L^{\frac{2N}{N + \alpha}}(\mathbb{B}^{N}).
\label{eq2}
\end{equation}
\noindent
By employing the Hardy-Littlewood-Sobolev inequality \eqref{HlSIneqHybSp} along with \eqref{eq2}, we derive the following
\begin{equation}
\begin{aligned}
&\int_{\mathbb{B}^{N}} \left((- \Delta_{\mathbb{B}^{N}})^{-\frac{\alpha}{2}}\left(|u_n|^p-|u_n-u|^p\right)\right) \left(|u_n|^p-|u_n-u|^p\right) \dvg\\
&\hspace{3cm}-\int_{\mathbb{B}^{N}} \left((- \Delta_{\mathbb{B}^{N}})^{-\frac{\alpha}{2}}|u|^p\right)  |u|^{p} \dvg  \\
&= \int_{\mathbb{B}^{N}} \left((- \Delta_{\mathbb{B}^{N}})^{-\frac{\alpha}{2}}\left(|u_n|^p-|u_n-u|^p-|u|^p\right)\right) \left(|u_n|^p-|u_n-u|^p\right) \dvg\\
&\hspace{3cm}+\int_{\mathbb{B}^{N}} \left((- \Delta_{\mathbb{B}^{N}})^{-\frac{\alpha}{2}}\left(|u|^p\right)\right) \left(|u_n|^p-|u_n-u|^p-|u|^p\right) \dvg\\
&\rightarrow 0.
\end{aligned}\label{int1}
\end{equation}
\noindent
Furthermore, for $u_n \in L^{\frac{2Np}{N + \alpha}}(\mathbb{B}^{N})$ with $u_n(x) \rightarrow u(x)$ a.e. in $\bn$, we have
\begin{equation}
\left|u_{n}-u\right|^{p} \rightharpoonup 0 \quad \text { weakly in } L^{\frac{2 N}{N+\alpha}}\left(\mathbb{B}^{N}\right). \label{eq3}
\end{equation}
Consider now the following integral
\begin{equation}
    \begin{aligned}
        &\int_{\mathbb{B}^{N}} \left((- \Delta_{\mathbb{B}^{N}})^{-\frac{\alpha}{2}}\left(|u_n|^p-|u_n-u|^p\right)\right)|u_n-u|^p \dvg\\
        &= \int_{\mathbb{B}^{N}} \left((- \Delta_{\mathbb{B}^{N}})^{-\frac{\alpha}{2}}\left(|u_n|^p-|u_n-u|^p-|u|^p\right)\right) |u_n-u|^p \dvg\\
        &\hspace{1cm}+\int_{\mathbb{B}^{N}} \left((- \Delta_{\mathbb{B}^{N}})^{-\frac{\alpha}{2}}|u|^p\right) |u_n-u|^p \dvg\\
        & \rightarrow 0,
    \end{aligned}\label{int2}
\end{equation}
which follows from \eqref{HlSIneqHybSp}, \eqref{eq2}, and \eqref{eq3}.\\
By combining \eqref{breakTerm}, \eqref{int1}, and \eqref{int2}, we arrive at the desired conclusion.
\end{proof}

\begin{lemma}
Let $0<\alpha<N$ and $\frac{N+\alpha}{N}<p<2_{\alpha}^{*}=\frac{N+\alpha}{N-2}$ if $N \geq 3$. Then for $\lambda \leq \frac{(N-1)^{2}}{4}, \zeta$ is attained by some nonnegative function in $\mathcal{H}_{\lambda}\left(\mathbb{B}^{N}\right)$. 
\end{lemma}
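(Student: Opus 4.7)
The plan is to run a concentration--compactness argument on a normalized minimizing sequence, combined with the isometry invariance encoded in Lemma \ref{isomeLemm} and the nonlocal Brezis--Lieb splitting proved just above. First I would take $\{u_n\}\subset\mathcal{H}_\lambda(\bn)\setminus\{0\}$ with $\mathcal{I}(u_n)\to\zeta$, replace $u_n$ by $|u_n|$ (which does not increase $\mathcal{I}$ since $|\nabla_{\bn}|u_n||\le|\nabla_{\bn}u_n|$ a.e.\ and the denominator depends only on $|u_n|^{p}$), and rescale $u_n\mapsto t_n u_n$ so that
$$
\int_{\bn}\bigl[(-\Delta_{\bn})^{-\alpha/2}|u_n|^p\bigr]|u_n|^p\dvg=1,\qquad\|u_n\|_\lambda^{2}\longrightarrow\zeta.
$$
Then $\{u_n\}$ is bounded in $\mathcal{H}_\lambda(\bn)$ and, along a subsequence, $u_n\rightharpoonup u$ weakly and a.e.\ with $u\ge 0$. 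One also notes that $\zeta>0$ by coupling Theorem \ref{HLSinqHyb} with the Poincar\'e--Sobolev inequality, so the ratio in $\mathcal{I}$ is bounded away from $0$.

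The central obstacle is that the embedding $\mathcal{H}_\lambda(\bn)\hookrightarrow L^{2Np/(N+\alpha)}(\bn)$ is not compact, so a priori $u\equiv 0$. To rule this out I would use the concentration function
$$
Q_n(r)=\sup_{a\in S_r}\int_{A(a,r)}|u_n|^{\frac{2Np}{N+\alpha}}\dvg.
$$
Theorem \ref{HLSinqHyb} applied to the normalization forces $\liminf\|u_n\|_{L^{2Np/(N+\alpha)}}\ge c_0>0$; together with a hyperbolic Lions--type vanishing lemma tailored to the family $\{A(a,r):a\in S_r\}$ (which covers $\bn$ with uniformly bounded overlap for each fixed $r$), this yields $r_0>0$, $\delta>0$ and $a_n\in S_{r_0}$ with $\int_{A(a_n,r_0)}|u_n|^{\frac{2Np}{N+\alpha}}\dvg\ge\delta$. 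Lemma \ref{isomeLemm} then provides an isometry $\tau_n$ mapping $A(a_n,r_0)$ onto a fixed reference set $A(a_0,r_0)$. Since $\mathcal{I}$ is isometry--invariant, $\tilde u_n:=u_n\circ\tau_n^{-1}$ remains minimizing and normalized, and Rellich compactness on the relatively compact set $A(a_0,r_0)$ forces the weak limit $\tilde u$ of $\tilde u_n$ to be nontrivial.

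Having secured $\tilde u\not\equiv 0$, I would close the argument via two Brezis--Lieb type decompositions. Weak convergence in the Hilbert space $(\mathcal{H}_\lambda(\bn),\langle\cdot,\cdot\rangle_\lambda)$ gives
$$
\|\tilde u_n\|_\lambda^{2}=\|\tilde u_n-\tilde u\|_\lambda^{2}+\|\tilde u\|_\lambda^{2}+o(1),
$$
while the nonlocal Brezis--Lieb lemma established just above yields $1=B_n+D+o(1)$, where $B_n$ and $D$ denote the HLS functional in the denominator of $\mathcal{I}$ evaluated at $\tilde u_n-\tilde u$ and $\tilde u$ respectively, with $D>0$. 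Using $\mathcal{I}\ge\zeta$ on each piece and summing, one obtains in the limit
$$
\zeta\ge\zeta\bigl(D^{1/p}+B^{1/p}\bigr),\qquad B:=\lim B_n=1-D\in[0,1].
$$
Since $p>1$, the elementary inequality $x^{1/p}+y^{1/p}\ge(x+y)^{1/p}$ is strict whenever $x,y>0$; combined with $D>0$ this forces $B=0$, hence $D=1$ and $\|\tilde u\|_\lambda^{2}=\zeta$. Thus $\tilde u\ge 0$ attains $\zeta$.

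The genuine obstacle is the hyperbolic vanishing/covering step: unlike in $\mathbb{R}^N$, where the Lions lemma uses balls translated by the abelian group of Euclidean translations, here the noncompact isometry group of $\bn$ is nonabelian and is transitive only on the specific geometric family $\{A(a,r)\}$ of Lemma \ref{isomeLemm}. Verifying uniformly bounded overlap of this family and localizing the HLS quantity on each piece, despite the fact that $(-\Delta_{\bn})^{-\alpha/2}$ is given by convolution against the full hyperbolic heat kernel rather than a simple Euclidean Riesz kernel, is the main technical input I expect to dominate the proof.
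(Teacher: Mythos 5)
Your overall architecture (normalization, concentration function on the family $A(a,r)$, Lemma~\ref{isomeLemm} isometries, nonlocal Brezis--Lieb) matches the paper's, and your closing step is a clean and valid alternative to theirs: where the paper passes to a Palais--Smale sequence via Ekeland and checks that the weak limit lies in the Nehari manifold $\mathcal{N}$, you instead split the norm and the HLS term via Brezis--Lieb and use the strict subadditivity of $t\mapsto t^{1/p}$ to force the remainder to vanish. Both closings work once nontriviality of the weak limit is in hand.

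The genuine gap is in the step that is supposed to produce $\tilde u\not\equiv 0$. You write that Rellich compactness ``on the relatively compact set $A(a_0,r_0)$'' forces the weak limit to be nontrivial, but $A(a,r)=B(a,r)\cap\mathbb{B}^N$ is \emph{not} relatively compact in $\mathbb{B}^N$: by construction $a\in S_r$ lies outside the unit ball and $\partial B(a,r)$ meets $\mathbb{S}^{N-1}$ orthogonally, so $A(a,r)$ is a cap that touches the ideal boundary, and its closure in $\mathbb{B}^N$ is noncompact. Consequently mass can still escape to infinity \emph{inside} $A(a_0,r_0)$, and the local compact embedding gives you nothing there. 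This is exactly the difficulty the paper's Step~3 is built to overcome: they first upgrade the translated sequence to a Palais--Smale sequence, then assume $v\equiv 0$ and run a cut-off estimate (test function $\Phi^2 v_n$ with $\Phi$ supported in $A(a,\sqrt 3)$), combining H\"older, HLS, and the Poincar\'e--Sobolev inequality to obtain a self-improving bound whose closure hinges on the smallness of $\delta$ chosen against $(C(N,\alpha))^{-1}S_{\lambda,2Np/(N+\alpha)}^{1+p/2}M^{-p}$; this shows $\int_{\{|x|\ge r\}}|v_n|^{2Np/(N+\alpha)}\dvg\to 0$, and only after that do they apply local compactness on the compact core $\{|x|<r\}$ to contradict the fixed concentration $\delta$ on $A(x_0,\sqrt 3)$. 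In short, Rellich alone cannot see the boundary behaviour of $A(a_0,r_0)$; the quantitative $\delta$-smallness argument is the missing ingredient, and your own closing remark correctly flags that this is where the work lies, but the specific mechanism you propose does not fill the hole.
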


\begin{proof}
The associated Nehari manifold is defined as 
$$
\mathcal{N}:=\left\{u \in \mathcal{H}_{\lambda}\left(\mathbb{B}^{N}\right):\|u\|_{\lambda}^{2}=\int_{\mathbb{B}^{N}} \left((- \Delta_{\mathbb{B}^{N}})^{-\frac{\alpha}{2}}|u|^p\right) |u|^{p} \dvg, u \not\equiv 0\right\}.$$
\noindent
It is straightforward to see that
\begin{equation*}
\zeta=\inf_{u \in \mathcal{H}_{\lambda}\left(\mathbb{B}^{N}\right)\setminus \{0\}} \mathcal{I}(u)=\inf_{u \in \mathcal{N}} \mathcal{I}(u). 
\end{equation*}
\noindent
By utilizing the Hardy-Littlewood-Sobolev inequality \eqref{HlSIneqHybSp} and the Poincaré-Sobolev inequality \eqref{PoinSobIneq}, we obtain
\begin{equation}
\zeta \geq \left(C(N, \alpha)\right)^{-1/p}\;S_{\lambda, \frac{2 Np}{N+\alpha}}>0.\label{MinPos}
\end{equation}
We assert that $\zeta$ is attained.\\
Let $\left\{u_{n}\right\}$ be a minimizing sequence for $\zeta$ in $\mathcal{N}$, meaning
\begin{equation}
\left\|u_{n}\right\|_{\lambda}^{\frac{2(p-1)}{p}}=\left(\int_{\mathbb{B}^{N}} \left((- \Delta_{\mathbb{B}^{N}})^{-\frac{\alpha}{2}}|u_n|^p\right) |u_n|^{p} \dvg\right)^{\frac{p-1}{p}} \rightarrow \zeta \quad \text { as } n \rightarrow \infty \text {. } \label{MinSeqNehari}
\end{equation}

It is evident that $\left\{u_{n}\right\}$ is bounded in $\mathcal{H}_{\lambda}\left(\mathbb{B}^{N}\right)$ by some constant $M>0$. Consequently, we can assume, up to a subsequence, that $u_{n} \rightharpoonup u$ weakly in $H_{\lambda}\left(\mathbb{B}^{N}\right)$. To demonstrate that $\zeta$ is attained, it suffices to show that $\left\{u_{n}\right\}$ converges weakly, and pointwise, to some $u \in \mathcal{N}$ up to the isometry group of $\mathbb{B}^{N}$.

\medskip
The proof will be divided into the following steps:

\medskip
\textbf{Step 1:} The goal of this step is to identify a sequence of translations to restore compactness. Since $\left\{u_{n}\right\}$ is a minimizing sequence for $\zeta$, and in light of \eqref{MinPos} and \eqref{MinSeqNehari}, ${u_n}$ does not strongly converge to zero. Therefore, we have
$$
\liminf _{n \rightarrow \infty} \int_{\mathbb{B}^{N}} \left((- \Delta_{\mathbb{B}^{N}})^{-\frac{\alpha}{2}}|u_n|^p\right)  |u_n(x)|^{p} \dvg>\delta_{1}>0 .
$$
\noindent
By applying the Hardy-Littlewood-Sobolev inequality \eqref{HLSinqHyb}, it follows that
$$
\liminf _{n \rightarrow \infty} \int_{\mathbb{B}^{N}}\left|u_{n}\right|^{\frac{2 Np}{N+\alpha}} \dvg>\delta_{2}>0.
$$
Choose a positive constant $\delta$ such that
$$
0<2 \delta<\delta_{2}<\left((C(N, \alpha))^{-1} S_{\lambda, \frac{2 Np}{N+\alpha}}^{1+\frac{p}{2}} M^{-p}\right)^{\frac{2}{p-2} \frac{Np}{N+\alpha}} .
$$
\noindent
Define the concentration function $Q_n: (0, +\infty) \to \mathbb{R}$ by
\noindent
$$
\begin{aligned}
& Q_{n}(r)=\sup _{x \in S_{r}} \int_{A(x, r)}\left|u_{n}\right|^{\frac{2Np}{N+\alpha}} \dvg.
\end{aligned}
$$

\noindent
It follows that $\lim_{r \to 0} Q_n(r) = 0$ and $\lim_{r \to \infty} Q_n(r) > \delta$. Therefore, we can find $R_n > 0$ and $x_n \in S_{R_n}$ such that
\noindent
$$
\sup _{x \in S_{R_{n}}} \int_{A\left(x, R_{n}\right)}\left|u_{n}\right|^{\frac{2Np}{N+\alpha}} \dvg=\int_{A\left(x_n, R_{n}\right)}\left|u_{n}\right|^{\frac{2Np}{N+\alpha}} \dvg=\delta.
$$

\noindent
Fix  $x_0\in S_{\sqrt{3}}$, use Lemma \ref{isomeLemm} to select isometry $T_n \in I(\mathbb{B}^N)$ such that $A(x_n,r_n)=T_nA(x_0,\sqrt{3})$ for all $n$.

\medskip
\textbf{Step 2:} This step aims to locate the candidate for the minimizer.\\
Define $v_{n}(x)=u_{n} \circ T_{n}(x)$. With $T_{n}$ being an isometry, it is trivial to observe that $\left\{v_{n}(x)\right\} \subset \mathcal{N}$ and remains a minimizing sequence, i.e,
\begin{equation*}
\left\|v_{n}\right\|_{\lambda}^{\frac{2(p-1)}{p}}=\left(\int_{\mathbb{B}^{N}} \left((- \Delta_{\mathbb{B}^{N}})^{-\frac{\alpha}{2}}|v_n|^p\right) |v_n|^{p} \dvg\right)^{\frac{p-1}{p}}\rightarrow \zeta \quad \text { as } n \rightarrow \infty.
\end{equation*}

\noindent
Furthermore,
\begin{equation}
\begin{aligned}
\int_{A\left(x_{0}, \sqrt{3}\right)}\left|v_{n}\right|^{\frac{2Np}{N+\alpha}} \dvg & =\int_{A\left(x_{n}, R_{n}\right)}\left|u_{n}\right|^{\frac{2Np}{N+\alpha}} \dvg=\delta \\
& =\sup _{x \in S_{\sqrt{3}}} \int_{A(x, \sqrt{3})}\left|v_{n}\right|^{\frac{2Np}{N+\alpha}} \dvg. 
\end{aligned}\label{ConcOfNewSeq}
\end{equation}

\noindent
Following the Ekeland principle, we can assume that the sequence $\{v_{n}\}$ is a Palais-Smale sequence. Specifically,
\begin{equation}
\left\langle v_{n}, w\right\rangle_{\lambda}=\int_{\mathbb{B}^{N}} \left((- \Delta_{\mathbb{B}^{N}})^{-\frac{\alpha}{2}}|v_n|^p\right) |v_n(x)|^{p-2}v_n(x) w\dvg+o_n(1)\|w\| \text{ for $w\in\mathcal{H_{\lambda}}(\bn)$}, \label{TranPSseq}
\end{equation}
\noindent
where $o_n(1) \rightarrow 0$ as $n\rightarrow \infty$ in $\mathcal{H}_{\lambda}$. Consequently, up to a subsequence, we may assume $v_{n} \rightharpoonup v$ in $\mathcal{H}_{\lambda}\left(\mathbb{B}^{N}\right)$. Furthermore, due to this weak convergence and the continuity of the functional $u \mapsto J^ {\prime}(u) $ where $J(u)= \int_{\mathbb{B}^{N}} \left((- \Delta_{\mathbb{B}^{N}})^{-\frac{\alpha}{2}}|u|^p\right) |u|^{p} \dvg $, it is not difficult to deduce that
$$
\|v\|_{\lambda}^{2}=\int_{\mathbb{B}^{N}} \left((- \Delta_{\mathbb{B}^{N}})^{-\frac{\alpha}{2}}|v|^p\right) |v|^{p} \dvg.
$$
\noindent
Thus, it remains to show that $v \not\equiv 0$ to conclude that $v\in \mathcal{N}$.

\medskip
\textbf{Step 3:} Assume, if possible, that $v\equiv0$. We propose that for any $1>r>2-\sqrt{3}$,
$$
\int_{\mathbb{B}^{N} \cap\{|x| \geq r\}}\left|v_{n}\right|^{\frac{2Np}{N+\alpha}} \dvg=o(1) .
$$
To establish this, select a point $a \in S_{\sqrt{3}}$. Define $\Phi \in C_{0}^{\infty}(A(a, \sqrt{3}))$ such that $0 \leq \Phi \leq 1$, and substitute $w=\Phi^{2} v_{n}$ in \eqref{TranPSseq}. This yields
\begin{equation*}
\begin{aligned}
& \int_{\mathbb{B}^{N}} \nabla_{\mathbb{B}^{N}} v_{n} \nabla_{\mathbb{B}^{N}}\left(\Phi^{2} v_{n}\right)-\lambda v_{n} \Phi^{2} v_{n} \dvg =\int_{\mathbb{B}^{N}} \left((- \Delta_{\mathbb{B}^{N}})^{-\frac{\alpha}{2}}|v_n|^p\right) |v_n|^{p-2}v_n \Phi^{2} v_{n}\dvg+o(1) .
\end{aligned}
\end{equation*}

\medskip
\noindent
Using local compact embedding, we simplify the left-hand side of the above expression as in (\cite[pg. 255]{BS})
\begin{equation*}
\langle  v_n,\phi^2v_n\rangle_\lambda =   \norm{\phi v_n}_\la^2+o(1). 
\end{equation*}

\medskip
\noindent
Thus, combining the two expressions above gives 
\begin{equation}
\begin{aligned}
& \int_{\mathbb{B}^{N}}\left|\nabla_{\mathbb{B}^{N}}\left(\Phi v_{n}\right)\right|^{2}-\lambda\left(\Phi v_{n}\right)^{2} \dvg =\int_{\mathbb{B}^{N}} \left((- \Delta_{\mathbb{B}^{N}})^{-\frac{\alpha}{2}}|v_n|^p\right) |v_n|^{p-2}\left(\Phi v_{n}\right)^{2}\dvg+o(1).
\end{aligned}\label{eq1}
\end{equation}

\medskip
\noindent
Using \eqref{eq1}, along with the Hölder's inequality, the Hardy-Littlewood-Sobolev inequality \eqref{HLSinqHyb}, and the Poincaré-Sobolev inequality, we derive

$$
\begin{aligned}
& S_{\lambda,\frac{2Np}{N+\alpha}}\left(\int_{\mathbb{B}^{N}}\left|\Phi v_{n}\right|^{\frac{2Np}{N+\alpha}} \dvg\right)^{\frac{2}{\frac{2Np}{N+\alpha}}} \\&
\leq C(N, \alpha)\left(\int_{\mathbb{B}^{N}}\left|v_{n}\right|^{\frac{2Np}{N+\alpha}} \dvg\right)^{\frac{N+\alpha}{2 N}} \left(\int_{\mathbb{B}^{N}}\left[\left|v_{n}\right|^{p-2}\left(v_{n} \Phi\right)^{2}\right]^{\frac{2 N}{N+\alpha}} \dvg\right)^{\frac{N+\alpha}{2 N}} \\
& \leq C(N, \alpha) S_{\lambda,\frac{2 Np}{N+\alpha}}^{-\frac{p}{2}}\left\|u_{n}\right\|_{\lambda}^{p} \left(\int_{\mathbb{B}^{N}}\left[\left|v_{n}\right|^{p-2}\left(v_{n} \Phi\right)^{2}\right]^{\frac{2 N}{N+\alpha}} \dvg\right)^{\frac{N+\alpha}{2 N}} \\
& \leq C(N, \alpha) S_{\lambda, \frac{2 Np}{N+\alpha}}^{-\frac{p}{2}} M^{p} \left(\int_{\mathbb{B}^{N}}\left|\Phi v_{n}\right|^{\frac{2 Np}{N+\alpha}} \dvg\right)^{\frac{2}{p} \frac{N+\alpha}{2 N}} \left(\int_{A(a, \sqrt{3})}\left|v_{n}\right|^{\frac{2Np}{N+\alpha}} \dvg\right)^{\frac{p-2}{p} \frac{N+\alpha}{2 N}} .
\end{aligned}
$$
\noindent
Now, if
\noindent
$$
\int_{\mathbb{B}^{N}}\left|\Phi v_{n}(x)\right|^{\frac{2Np}{N+\alpha}} \dvg(x) \nrightarrow 0,
$$
\noindent
we have

$$
\begin{aligned}
\delta^{\frac{p-2}{p} \frac{N+\alpha}{2 N}} & >\left(\int_{A(a, \sqrt{3})}\left|v_{n}\right|^{\frac{2Np}{N+\alpha}} \dvg\right)^{\frac{p-2}{p} \frac{N+\alpha}{2 N}} \\
& \geq(C(N, \alpha))^{-1} S_{\lambda, \frac{2 Np}{N+\alpha}}^{1+\frac{p}{2}} M^{-p}>\delta^{\frac{p-2}{p} \frac{N+\alpha}{2 N}},
\end{aligned}
$$

\noindent
which is a contradiction. Therefore,

$$
\int_{\mathbb{B}^{N}}\left|\Phi v_{n}(x)\right|^{\frac{2Np}{N+\alpha}} \dvg(x) \rightarrow 0.
$$

\noindent
Since $a \in S_{\sqrt{3}}$ is arbitrary, the claim is proven.\\
\noindent
Since the condition $\frac{N+\alpha}{N} < p < \frac{N+\alpha}{N-2}$ ensures that $v_{n} \rightarrow 0$ in $L_{\text{loc}}^{\frac{2Np}{N+\alpha}}(\mathbb{B}^{N})$, this, combined with the previous assertion, directly contradicts \eqref{ConcOfNewSeq}. Consequently, we conclude that $v \not\equiv 0$ and $v \in \mathcal{N}$.
\end{proof}
Thus, the proof of Theorem \ref{exiSubCrit} is straightforwardly derived from the preceding theorem and the application of the maximum principle.
\end{section}

\begin{section}{Radial Symmetry}\label{Symmetry}
The objective of this section is to demonstrate the hyperbolic symmetry of ground states. Specifically, we will prove the following result.
\begin{proposition}\label{RadSymm}
  Let $0 < \alpha < N$ and $\frac{N+\alpha}{N} \leq p \leq 2_{\alpha}^{*} = \frac{N+\alpha}{N-2}$ for $N \geq 3$. If $u \in \mathcal{H}_{\lambda}(\mathbb{B}^{N})$ is a positive ground state of \eqref{mainEq}, then there exists a point $x_0 \in \mathbb{B}^N$ and a function $v:(0, \infty) \rightarrow \mathbb{R}$, which is non-negative and non-increasing, such that for almost every $x \in \mathbb{B}^N$
\begin{equation*}
u(x)=v\left(d(x,x_0)\right) .
\end{equation*}
\end{proposition}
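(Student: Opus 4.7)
The plan is to adapt the polarization (two-point rearrangement) strategy introduced by Moroz and Van~Schaftingen for the Euclidean Choquard equation to the hyperbolic setting. The idea is that polarization of a ground state $u$ with respect to any totally geodesic hyperplane of $\bn$ cannot increase the energy $\mathcal{I}$; a sharp equality analysis will force $u$ to coincide, up to hyperbolic reflection, with its polarization, and applying this rigidity across every such hyperplane will yield hyperbolic radial symmetry about some $x_0\in\bn$.

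Concretely, for a closed half-space $H\subset\bn$ bounded by a totally geodesic hyperplane $\pa H$ and for $\si_H\in I(\bn)$ the geodesic reflection across $\pa H$, define, for $u\geq 0$ on $\bn$,
\[
u^H(x)=\max\{u(x),u(\si_H x)\}\ \text{if } x\in H, \qquad u^H(x)=\min\{u(x),u(\si_H x)\}\ \text{if } x\in\bn\setminus H.
\]
Three steps are needed. \emph{First}, $\|u^H\|_\la=\|u\|_\la$: since $\si_H$ is an isometry, polarization swaps the values of $u$ and $u\circ\si_H$ on the sets $\{u\geq u\circ\si_H\}$ and its complement in a measure-preserving way, leaving both the $L^2$-integral and the Dirichlet integral invariant. \emph{Second}, the kernel $k_{\al,N}$ from \eqref{GreenFunc} is strictly decreasing in $\rho$: the explicit formulas in Section~\ref{SemiGp} show that $p_{t,N}(\rho)$ is strictly decreasing in $\rho$ for each $t>0$, and integration against the positive weight $t^{\al/2-1}/\Gamma(\al/2)$ preserves the strict monotonicity. \emph{Third}, combining this with the geometric fact that $\rho(x,y)=\rho(\si_H x,\si_H y)\leq\rho(x,\si_H y)=\rho(\si_H x,y)$ for $x,y\in H$ and the elementary rearrangement inequality $(a\vee b)(c\vee d)+(a\wedge b)(c\wedge d)\geq ac+bd$ for $a,b,c,d\geq 0$, one splits $\bn\times\bn$ into the four regions $H\times H$, $H\times H^c$, $H^c\times H$, $H^c\times H^c$ and obtains
\[
\int_{\bn}\int_{\bn} k_{\al,N}(\rho(x,y))\,u(x)^p u(y)^p\,\dvg(x)\dvg(y)\leq \int_{\bn}\int_{\bn} k_{\al,N}(\rho(x,y))\,u^H(x)^p u^H(y)^p\,\dvg(x)\dvg(y),
\]
with equality if and only if $u=u^H$ or $u=u^H\circ\si_H$ almost everywhere. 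The three steps together give $\mathcal{I}(u^H)\leq \mathcal{I}(u)=\zeta$, so by minimality equality must hold and the equality case triggers.

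Thus, for every closed half-space $H$, either $u=u^H$ or $u=u^H\circ\si_H$. To conclude, I would invoke the hyperbolic analogue of Van~Schaftingen's characterization of symmetric functions via polarizations: a nonnegative function on $\bn$ that agrees with one of $u^H$ or $u^H\circ\si_H$ for every closed half-space $H$ is necessarily a nonincreasing radial function of $d(\cdot,x_0)$ for some $x_0\in\bn$. The center $x_0$ is extracted by a topological/selection argument on the family of admissible half-spaces, transported from $\R^N$ to $\bn$ via the ball-model identification and the transitivity of $I(\bn)$. The chief obstacle I expect is the sharp equality case in Step~3: the strict monotonicity of $k_{\al,N}$ must be used in an essential way to rule out ``mixed'' configurations in which the double integral equals its polarized counterpart but neither $u=u^H$ nor $u=u^H\circ\si_H$ holds pointwise a.e.; a careful analysis of the locus $\{u=u\circ\si_H\}$, where the elementary inequality degenerates, is required. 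A secondary technical matter is making the Euclidean characterization-by-polarizations fully rigorous in the hyperbolic geometry, using geodesic reflections rather than Euclidean ones.
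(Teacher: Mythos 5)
Your proposal follows essentially the same route as the paper: polarization with respect to totally geodesic hypersurfaces, strict radial monotonicity of $k_{\alpha,N}$ (Lemma~\ref{MonoGreen}), the rearrangement-type inequality on the nonlocal double integral together with its equality analysis (Lemma~\ref{PolId}), invariance of $\|\cdot\|_\lambda$ under polarization, and finally the characterization of radial nonincreasing functions via polarizations (Lemmas~\ref{symLem}--\ref{invCha}, where the center $x_0$ is produced as a maximizer of a convolution with a fixed radial decreasing weight). The two subtleties you flag — the sharp equality case, handled by taking $x$ and $y$ in the opposite level sets $\{u>u\circ\sigma_H\}$ and $\{u<u\circ\sigma_H\}$ and using strict monotonicity of the kernel, and the rigorous transfer of Van~Schaftingen's polarization characterization to $\mathbb B^N$ — are exactly the points the paper addresses in Lemmas~\ref{PolId} and \ref{aidlemma}--\ref{invCha}.
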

\medskip
To demonstrate the radial symmetry, we will utilize the polarization technique, which is also referred to as the method of symmetrization or rearrangement. To facilitate this, we first introduce some notations for use throughout this section. Fix an origin $e$ in $\mathbb{B}^N$. Let $H \subset \mathbb{B}^N$ be a closed, totally geodesic hypersurface such that $e \notin H$. The components of $\mathbb{B}^N \setminus H$ are denoted as $H^+$ and $H^-$, where $H^+$ contains $e$ and $H^-$ does not. Let $\sigma_H: \mathbb{B}^N \to \mathbb{B}^N$ represent the reflection w.r.t $H$. Specifically, $\sigma_H$ is an isometry of $\mathbb{B}^N$, satisfies $\sigma_H^2 = Id$, $\sigma_H(x) = x$ for $x \in H$, and $\sigma_H H^{+}=H^{-}$ (see \cite{ADG}).

\medskip
\noindent
For a function $f: \mathbb{B}^N \to \mathbb{R}$, we define its polarization by $f^H: \mathbb{B}^N \to \mathbb{R}$ as follows:
\begin{equation*}
f^H(x)= \begin{cases}f(x), & \text { if } x \in H, \\ \max (f(x), f(\sigma_H(x))), & \text { if } x \in H^{+}, \\ \min (f(x), f(\sigma_H(x))), & \text { if } x \in H^{-}. \end{cases}
\end{equation*}

An initial step is to explore how polarization affects the non-local term and to analyze the equality cases.
\begin{lemma}\label{PolId}
Let $\alpha \in (0, N)$, $u \in L^{\frac{2 N}{N+\alpha}}\left(\mathbb{B}^N\right)$, and $H \subset \mathbb{B}^N$ be a closed, totally geodesic hypersurface. If $u \geq 0$ and
\begin{equation*}
\int_{\bn} \left[(- \Delta_{\mathbb{B}^{N}})^{-\frac{\alpha}{2}}u(y)\right]u(y) \dvg(y)\geqslant \int_{\bn} \left[(- \Delta_{\mathbb{B}^{N}})^{-\frac{\alpha}{2}}u^{H}(y)\right]u^{H}(y) \dvg(y),
\end{equation*}
then $u^H$ must either be equal to $u$ or $u \circ \sigma_H$.
\end{lemma}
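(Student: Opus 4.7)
The plan is to rewrite the bilinear form through the radial Green kernel, perform a symmetric splitting of $\bn\times\bn$ about $H$, and reduce the problem to a four-point algebraic comparison whose equality cases are transparent.

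Using \eqref{inverFracLap} one has
\begin{equation*}
\int_{\bn}\bigl[(-\Delta_{\bn})^{-\alpha/2}u\bigr]u\,\dvg \;=\; \iint_{\bn\times\bn} k_{\alpha,N}(\rho(x,y))\,u(x)u(y)\,\dvg(x)\dvg(y),
\end{equation*}
and similarly for $u^H$. Two ingredients drive the argument. The \emph{analytic input} is that $\rho\mapsto k_{\alpha,N}(\rho)$ is strictly decreasing on $(0,\infty)$; this follows from \eqref{GreenFunc} together with the strict radial monotonicity of the heat kernel $p_{t,N}(\rho)$. The \emph{geometric input} is that, writing $\bar x := \sigma_H(x)$, one has $\rho(x,y)<\rho(x,\bar y)$ for every $x,y\in H^+$; indeed the geodesic from $x$ to $\bar y$ meets $H$ at some point $q$ (with $\bar q = q$), so that $\rho(x,\bar y)=\rho(x,q)+\rho(q,y)\ge \rho(x,y)$, and the inequality is strict because the geodesic from $x$ to $y$ stays in the geodesically convex half-space $H^+$.

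I then split $\bn\times\bn$ (up to a null set) into the four pieces $H^{\pm}\times H^{\pm}$ and use $\sigma_H$ to reduce each to an integral over $H^+\times H^+$. Writing $a=u(x)$, $b=u(\bar x)$, $c=u(y)$, $d=u(\bar y)$ and $K_1=k_{\alpha,N}(\rho(x,y))$, $K_2=k_{\alpha,N}(\rho(x,\bar y))=k_{\alpha,N}(\rho(\bar x,y))$, the isometry invariance $\rho(\bar x,\bar y)=\rho(x,y)$ gives
\begin{equation*}
\iint_{\bn\times\bn} k_{\alpha,N}(\rho)\,u\,u \;=\; \iint_{H^+\!\times H^+}\bigl[K_1(ac+bd)+K_2(ad+bc)\bigr],
\end{equation*}
with the analogous identity for $u^H$ obtained by replacing $(a,b)$ with $(\max(a,b),\min(a,b))$ and $(c,d)$ with $(\max(c,d),\min(c,d))$. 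Subtracting yields
\begin{equation*}
\iint u^H(-\Delta_{\bn})^{-\alpha/2}u^H - \iint u(-\Delta_{\bn})^{-\alpha/2}u \;=\; \iint_{H^+\!\times H^+}(K_1-K_2)\,N(a,b,c,d),
\end{equation*}
where $N := \max(a,b)\max(c,d)+\min(a,b)\min(c,d)-ac-bd$. A direct four-case check on the signs of $a-b$ and $c-d$ gives $N=0$ whenever $(a-b)(c-d)\ge 0$ and $N = |a-b|\,|c-d|>0$ otherwise; together with $K_1>K_2$ a.e., this already recovers the polarization inequality in one direction.

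Under the standing hypothesis the nonnegative integrand must vanish almost everywhere on $H^+\times H^+$, forcing $N(u(x),u(\bar x),u(y),u(\bar y))=0$ for a.e.\ $(x,y)$. Setting
\begin{equation*}
A := \{x\in H^+ : u(x)>u(\bar x)\}, \qquad B := \{x\in H^+ : u(x)<u(\bar x)\},
\end{equation*}
the strict positivity set of $N$ is exactly $(A\times B)\cup(B\times A)$; hence one of $A$, $B$ has measure zero. If $B$ is null then $u\ge u\circ\sigma_H$ a.e.\ on $H^+$, whence $u^H=u$; if $A$ is null then $u^H=u\circ\sigma_H$. The main technical obstacle I foresee is establishing strict radial monotonicity of $k_{\alpha,N}$: this reduces to checking that each $p_{t,N}(\rho)$ is strictly decreasing in $\rho$, which in odd dimensions follows from iterating $-\tfrac{1}{\sinh\rho}\partial_\rho$ on the Gaussian factor in the explicit formula, and in even dimensions is obtained from the subordination representation of $p_{t,2m}$ as a positive integral against $p_{t,2m+1}$ together with a sign analysis of the resulting integrand.
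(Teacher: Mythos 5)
Your proof is correct and follows essentially the same strategy as the paper: rewrite the bilinear form through the radial kernel $k_{\alpha,N}$ via \eqref{inverFracLap}, fold $\bn\times\bn$ onto $H^+\times H^+$, reduce to a four-point pointwise comparison driven by the strict radial monotonicity of $k_{\alpha,N}$ (the paper's Lemma~\ref{MonoGreen}), and then read off the conclusion from the hypothesis. Your packaging of the case analysis into the single quantity $N=\max(a,b)\max(c,d)+\min(a,b)\min(c,d)-ac-bd$, which vanishes when $(a-b)(c-d)\ge 0$ and equals $|a-b|\,|c-d|$ otherwise, is a tidier way to present what the paper works out over four cases, and your final step — identifying the positivity set of $N(u(x),u(\bar x),u(y),u(\bar y))$ with $(A\times B)\cup(B\times A)$ and concluding that one of $A$, $B$ is null — spells out the passage from equality in the integrals to the dichotomy $u^H=u$ or $u^H=u\circ\sigma_H$, a step the paper asserts more briefly at the end of its proof.
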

Before proceeding with the proof of this lemma, it is crucial to establish the following monotonicity result (see also \cite{LLW}):
\begin{lemma}\label{MonoGreen}
  The function $k_{\alpha,N}$, as defined in \eqref{GreenFunc}, is positive and radially decreasing with respect to the geodesic distance $\rho$.
\end{lemma}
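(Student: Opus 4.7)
The plan is to reduce both claims to the corresponding properties of the heat kernel $\rho\mapsto p_{t,N}(\rho)$ for each fixed $t>0$, and then exploit the explicit radial formulas recalled in Section~\ref{SemiGp}. Since \eqref{GreenFunc} expresses $k_{\alpha,N}$ as an integral of $p_{t,N}$ against the strictly positive weight $t^{\alpha/2-1}/\Gamma(\alpha/2)$, positivity and non-increase in $\rho$ transfer at once from $p_{t,N}$ to $k_{\alpha,N}$. Integrability in $t$ at each $\rho>0$ is guaranteed by the Gaussian decay as $t\to 0^+$ and by the bound \eqref{offDiagEst} at infinity (using $\alpha<N$), while positivity of $p_{t,N}$ itself is a standard consequence of the strong maximum principle on the connected manifold $\mathbb{B}^N$ (and is also built into the lower bound of \eqref{HeatKerBounds}).

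For monotonicity in the odd case $N=2m+1$, I would set
\[
\phi_m(\rho,t):=\left(-\frac{1}{\sinh\rho}\,\frac{\partial}{\partial\rho}\right)^m e^{-\rho^2/4t},
\]
so that the explicit formula reads $p_{t,2m+1}(\rho)=c_m(t)\,\phi_m(\rho,t)$ with $c_m(t)>0$. Positivity of the heat kernel, invoked \emph{simultaneously in every odd dimension}, yields $\phi_m(\rho,t)\ge 0$ for all $m\ge 0$ and all $\rho>0$, $t>0$. The defining recursion $\phi_{m+1}=-(\sinh\rho)^{-1}\,\partial_\rho\phi_m$ then gives
\[
\partial_\rho\phi_m(\rho,t)=-\sinh\rho\cdot\phi_{m+1}(\rho,t)\le 0,
\]
so $\phi_m(\cdot,t)$ is non-increasing, and the odd-dimensional claim follows.

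For $N=2m$, the explicit formula expresses $p_{t,2m}(\rho)$, up to a positive factor depending only on $t$, as
\[
\int_\rho^{+\infty}\frac{\sinh r}{\sqrt{\cosh r-\cosh\rho}}\,\phi_m(r,t)\,dr.
\]
Substituting $u=\cosh r$ and then $v=u-\cosh\rho$ converts this into $\int_0^\infty v^{-1/2}\,\widetilde\phi_m(v+\cosh\rho,t)\,dv$, where $\widetilde\phi_m(u,t):=\phi_m(\operatorname{arccosh}u,t)$. Since $\phi_m(\cdot,t)$ is non-increasing from the odd case and $\operatorname{arccosh}$ is increasing, $\widetilde\phi_m(\cdot,t)$ is non-increasing; hence the integrand is pointwise non-increasing in $\cosh\rho$, and the integral---thus $p_{t,2m}(\rho)$---is non-increasing in $\rho$.

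The main delicate point I foresee is a clean handling of the improper integral and the singular factor $v^{-1/2}$ in the even case. The cleanest route is to avoid differentiating under the integral altogether and argue by pointwise monotone comparison of the integrand, combined with dominated convergence for any limits that arise. Strict decrease (if needed in the sequel) follows from the strict positivity of $\phi_{m+1}$, i.e.\ from the strict positivity of the heat kernel via the strong maximum principle.
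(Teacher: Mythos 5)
Your argument is correct and follows essentially the same route as the paper: both exploit the identity $\partial_\rho\bigl(-\tfrac{1}{\sinh\rho}\partial_\rho\bigr)^m e^{-\rho^2/4t}=-\sinh\rho\cdot\bigl(-\tfrac{1}{\sinh\rho}\partial_\rho\bigr)^{m+1}e^{-\rho^2/4t}$ together with positivity of the (higher-dimensional) heat kernel to get monotonicity in the odd case, and then pass to the even case via the substitution $\cosh r-\cosh\rho\mapsto$ new variable so that the $\rho$-dependence enters only through $\cosh\rho$. The only difference is cosmetic: the paper works directly with $k_{\alpha,N}$ (applying Fubini and differentiating under the integral sign, with the change of variable $w=\sqrt{\cosh r-\cosh\rho}$), whereas you first establish $\partial_\rho p_{t,N}\le 0$ for each fixed $t$ and then let the $t$-integral inherit the monotonicity by pointwise comparison, which avoids having to justify interchange of differentiation and integration.
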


\begin{proof}

For $N=2m+1$
\begin{equation*}
    \begin{aligned}
        k_{\alpha, 2m+1}(\rho)&= \frac{1}{\Gamma(\alpha/2)} \int_{0}^{\infty} p_{t,2m+1}(\rho)\frac{\mathrm{~d}t}{t^{1-\alpha/2}}\\
        &= \frac{1}{\Gamma(\alpha/2)} \int_{0}^{\infty} 2^{-m-1} \pi^{-m-1 / 2} t^{-\frac{1}{2}} e^{-\frac{(N-1)^2}{4} t}\left(-\frac{1}{\sinh \rho} \frac{\partial}{\partial \rho}\right)^m e^{-\frac{\rho^2}{4 t}}\frac{\mathrm{~d}t}{t^{1-\alpha/2}}\\
        &= \frac{1}{\Gamma(\alpha/2)}2^{-m-1} \pi^{-m-1 / 2}\int_{0}^{\infty} t^{\frac{\alpha-3}{2}}e^{-\frac{(N-1)^2}{4} t}\left(-\frac{1}{\sinh \rho} \frac{\partial}{\partial \rho}\right)^m e^{-\frac{\rho^2}{4 t}}\mathrm{~d}t.
    \end{aligned}
\end{equation*}
\noindent
To obtain the monotonicity result, we differentiate the expression above. Observe that
\begin{equation*}
\begin{aligned}
    \frac{\partial}{\partial \rho}\left[-\frac{1}{\sinh \rho} \frac{\partial}{\partial \rho}\right]^m e^{-\frac{\rho^2}{4 t}}&=(-\sinh \rho)\frac{-1}{\sinh \rho}\frac{\partial}{\partial \rho}\left[-\frac{1}{\sinh \rho} \frac{\partial}{\partial \rho}\right]^m e^{-\frac{\rho^2}{4 t}}\\
    & = -\sinh \rho \left[-\frac{1}{\sinh \rho} \frac{\partial}{\partial \rho}\right]^{m+1}e^{-\frac{\rho^2}{4 t}}.
\end{aligned}
\end{equation*}
Since the heat kernel is positive as per \eqref{HeatKerBounds}, we get:
\begin{equation*}
    \begin{aligned}
        &\frac{d}{d\rho} k_{\alpha,2m+1}(\rho)\\
        &= \frac{-2\pi \sinh \rho}{\Gamma(\alpha/2)} \left[2^{-(m+1)-1} \pi^{-(m+1)-1 / 2}\int_{0}^{\infty} t^{\frac{\alpha-3}{2}}e^{Nt}e^{-(m+1)^2t}\left(-\frac{1}{\sinh \rho} \frac{\partial}{\partial \rho}\right)^{m+1}e^{-\frac{\rho^2}{4 t}}\mathrm{~d}t\right]\\
        &= \frac{-2\pi \sinh \rho}{\Gamma(\alpha/2)}\int_{0}^{\infty} e^{Nt} p_{t,2m+3}(\rho)\frac{\mathrm{~d}t}{t^{1-\alpha/2}}<0.
    \end{aligned}
\end{equation*}
Using the same approach, we find that $\frac{\partial}{\partial \rho} p_{t,2m+1}(\rho) < 0$.

\medskip
\noindent
For $N=2m$, applying Fubini's theorem, we obtain
\begin{equation*}
    \begin{aligned}
        &k_{\alpha, 2m}(\rho)\\
        &= \frac{1}{\Gamma(\alpha/2)} \int_{0}^{\infty} p_{t,2m}(\rho)\frac{\mathrm{~d}t}{t^{1-\alpha/2}}\\
&= \frac{1}{\Gamma(\alpha/2)} \int_{0}^{\infty} (2 \pi)^{-m-\frac{1}{2}} t^{-\frac{1}{2}} e^{-\frac{(N-1)^2}{4} t} \int_\rho^{+\infty} \frac{\sinh r}{\sqrt{\cosh r-\cosh \rho}}\left(-\frac{1}{\sinh r} \frac{\partial}{\partial r}\right)^m e^{-\frac{r^2}{4 t}} \mathrm{~d} r \frac{\mathrm{~d}t}{t^{1-\alpha/2}}\\
&= \frac{1}{\Gamma(\alpha/2)}\int_\rho^{+\infty}\frac{\sinh r}{\sqrt{\cosh r-\cosh \rho}}\int_{0}^{\infty} (2 \pi)^{-m-\frac{1}{2}}t^{-\frac{1}{2}} e^{-\frac{(N-1)^2}{4} t} \left(-\frac{1}{\sinh r} \frac{\partial}{\partial r}\right)^m e^{-\frac{r^2}{4 t}}\frac{\mathrm{~d}t}{t^{1-\alpha/2}}\mathrm{~d} r\\
&=\frac{\sqrt{2}}{\Gamma(\alpha/2)}\int_{0}^{\infty}
\frac{e^{\frac{t}{4}(2N-1)}}{t^{1-\alpha/2}}\int_\rho^{+\infty}\frac{\sinh r}{\sqrt{\cosh r-\cosh \rho}}p_{t,2m+1}(r)\mathrm{~d} r\mathrm{~d} t.
\end{aligned}
        \end{equation*}
        Making a change of variables, let $w=\sqrt{\cosh r-\cosh \rho}$, then
        \begin{equation*}
\frac{d}{d \rho} p_{t, 2 m+1}(r)=\frac{d p_{t, 2 m+1}(r)}{d r} \frac{\partial r}{\partial \rho}=\frac{d p_{t, 2 m+1}(r)}{d r} \frac{\sinh \rho}{\sinh r}<0 .
\end{equation*}
Thus,
\begin{equation*}
\frac{d}{d \rho} k_{\alpha, 2 m}(\rho)=\frac{2\sqrt{2}}{\Gamma(\alpha/2)}\int_{0}^{\infty}
\frac{e^{\frac{t}{4}(2N-1)}}{t^{1-\alpha/2}}\int_0^{\infty} \frac{d}{d \rho}p_{t, 2 m+1}(r(w, \rho)) \mathrm{~d} w\mathrm{~d} t<0.
\end{equation*}
With this, we achieve the desired result, thus completing the proof.
\end{proof}

\begin{proof}[\textbf{Proof of Lemma \ref{PolId}}]
Consider the following
\begin{equation*}
\begin{aligned}
&\int_{\bn} \left[(- \Delta_{\mathbb{B}^{N}})^{-\frac{\alpha}{2}}u(y)\right]u(y) \dvg(y)\\&= \int_{\bn} \left[k_{\alpha,N}*u   \right](y)\;u(y) \dvg(y)\\
&=\int_{\bn} \int_{\bn} \left[k_{\alpha,N}(d(T_{y}(x),0))u(x) \dvg(x)  \right]u(y) \dvg(y)\\
&= \int_{\bn} \int_{\bn} k_{\alpha,N}(d(T_{y}(x),0))u(x)u(y) \dvg(x) \dvg(y)\\
& =\int_{H^+} \int_{H^{+}} k_{\alpha,N}(d(T_{y}(x),0))\left[u(x) u(y)+u\left(\sigma_H(x)\right) u\left(\sigma_H(y)\right)\right] \\
& \hspace{2cm}+k_{\alpha,N}(d(T_{\sigma_{H}(y)}(x),0))\left[u(x) u\left(\sigma_H(y)\right)+u\left(\sigma_H(x)\right) u(y)\right] \dvg(x) \dvg(y),
\end{aligned}
\end{equation*}
and similarly for $u^H$. We aim to establish that the following inequality holds for all $x \in H^+$ and $y \in H^+$:
\begin{equation}
\begin{aligned}
&k_{\alpha,N}(d(T_{y}(x),0))\left[u(x) u(y)+u\left(\sigma_H(x)\right) u\left(\sigma_H(y)\right)\right] \\&\hspace{3cm}+k_{\alpha,N}(d(T_{\sigma_{H}(y)}(x),0))\left[u(x) u\left(\sigma_H(y)\right)+u\left(\sigma_H(x)\right) u(y)\right]\\
&\leqslant k_{\alpha,N}(d(T_{y}(x),0))\left[u^H(x) u^H(y)+u^H\left(\sigma_H(x)\right)u^H\left(\sigma_H(y)\right)\right]\\
& \hspace{3cm}+k_{\alpha,N}(d(T_{\sigma_{H}(y)}(x),0))\left[u^H(x) u^H\left(\sigma_H(y)\right)+u^H\left(\sigma_H(x)\right) u^H(y)\right].
\end{aligned}\label{PolIdeint}
\end{equation}
\noindent
We will now analyze each case individually to verify this inequality.

\medskip
\noindent
   \textbf{Case I} $u(\sigma_{H}(x))\leq u(x),\;\; u(\sigma_{H}(y))\leq u(y)$.\\
   In this case, we have
    \begin{equation*}
        u^H(x)= u(x), u^{H}(y)=u(y),u^H(\sigma_{H}(x))=u(\sigma_{H}(x)), u^H(\sigma_{H}(y))=u(\sigma_{H}(y)).
    \end{equation*}
   Consequently, equality holds in \eqref{PolIdeint}.
   
\medskip   
\noindent   
   \textbf{Case II} $u(\sigma_{H}(x))\geq u(x),\;\; u(\sigma_{H}(y))\geq u(y)$.\\
    This case can be handled in a manner similar to the previous one.
    
\medskip   
\noindent   
   \textbf{Case III} $u(x)\leq u(\sigma_{H}(x)),\;\; u(\sigma_{H}(y))\leq u(y).$\\
   Here, we have
    \begin{equation*}
        u^H(x)= u(\sigma_{H}(x)), u^{H}(y)=u(y),u^H(\sigma_{H}(x))=u(x), u^H(\sigma_{H}(y))=u(\sigma_{H}(y)).
    \end{equation*}
Moreover, given that $u \geq 0$, the subsequent inequality is satisfied
\begin{gather}
        \left[u(\sigma_{H}(x))-u(x)\right]u(y) \geq \left[u(\sigma_{H}(x))-u(x)\right]u(\sigma_{H}(y))\notag\\
        u(\sigma_{H}(x))u(y)+u(x)u(\sigma_{H}(y))\geq u(\sigma_{H}(x))u(\sigma_{H}(y))+u(x)u(y).\label{RearrIneq}\\ \notag
        \end{gather}
Now consider the LHS of \eqref{PolIdeint}
\noindent
\begin{equation*}
        \begin{aligned}
            &k_{\alpha,N}(d(T_{y}(x),0))\left[u(x) u(y)+u\left(\sigma_H(x)\right) u\left(\sigma_H(y)\right)\right]\\& \hspace{0.5cm}+k_{\alpha,N}(d(T_{\sigma_{H}(y)}(x),0))\left[u(x) u\left(\sigma_H(y)\right)+u\left(\sigma_H(x)\right) u(y)\right]\\\\
            &= k_{\alpha,N}(d(T_{\sigma_{H}(y)}(x),0))\left[u(x) u(y)+u\left(\sigma_H(x)\right) u\left(\sigma_H(y)\right)+u(x) u\left(\sigma_H(y)\right)+u\left(\sigma_H(x)\right) u(y)\right]\\
            &\hspace{0.5cm}+ \left[k_{\alpha,N}(d(T_{y}(x),0))-k_{\alpha,N}(d(T_{\sigma_{H}(y)}(x),0))\right]\left(u(x) u(y)+u\left(\sigma_H(x)\right) u\left(\sigma_H(y)\right)\right)\\\\
            &\leq k_{\alpha,N}(d(T_{\sigma_{H}(y)}(x),0))\left[u(x) u(y)+u\left(\sigma_H(x)\right) u\left(\sigma_H(y)\right)+u(x) u\left(\sigma_H(y)\right)+u\left(\sigma_H(x)\right) u(y)\right]\\
            &\hspace{0.5cm}+ \left[k_{\alpha,N}(d(T_{y}(x),0))-k_{\alpha,N}(d(T_{\sigma_{H}(y)}(x),0))\right]\left(u(\sigma_{H}(x))u(y)+u(x)u(\sigma_{H}(y))\right)\\\\
            &= k_{\alpha,N}(d(T_{\sigma_{H}(y)}(x),0))\left[u(x) u(y)+u\left(\sigma_H(x)\right) u\left(\sigma_H(y)\right)\right]\\
            &\hspace{0.5cm}+k_{\alpha,N}(d(T_{y}(x),0))\left[u(\sigma_{H}(x))u(y)+u(x)u(\sigma_{H}(y))\right]\\\\
           &=k_{\alpha,N}(d(T_{y}(x),0))\left[u^H(x)u^H(y)+u^H(\sigma_{H}(x))u^H(\sigma_{H}(y))\right] \\
            &\hspace{0.5cm}+ k_{\alpha,N}(d(T_{\sigma_{H}(y)}(x),0))\left[u^H(\sigma_{H}(x))u^H(y)+u^H(x)u^H(\sigma_{H}(y))\right].
        \end{aligned}
    \end{equation*}
    where we used the fact that $x, y \in H^+$, along with Lemma \ref{MonoGreen} and the rearrangement inequality \eqref{RearrIneq}.
    
    \medskip   
\noindent   
\textbf{Case IV}  $u(\sigma_{H}(x))\leq u(x),\;\;u(y)\leq  u(\sigma_{H}(y))$.\\
   This scenario can be addressed in a manner similar to Case III.

   \smallskip
   \noindent
Therefore, the lemma can be concluded utilizing \eqref{PolIdeint} and the provided hypothesis.
\end{proof}

Now we will showcase how the condition that either $u^H = u$ or $u^H = u \circ \sigma_H$ can be leveraged to infer certain symmetry properties.
\begin{lemma}\label{symLem}
For any $s \geq 1$ and $u \in L^s(\mathbb{B}^N)$, if $u \geq 0$ and for every closed, totally geodesic hypersurface $H \subset \mathbb{B}^N$, it holds that $u^H = u$ or $u^H = u \circ \sigma_H$, then there exist $x_0 \in \mathbb{B}^N$ and a non-increasing function $v: (0, \infty) \rightarrow \mathbb{R}$ such that for almost every $x \in \mathbb{B}^N$, $u(x) = v(d(x, x_0))$.
\end{lemma}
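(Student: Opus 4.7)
The strategy is to reduce the lemma to a claim about superlevel sets. Since $u \in L^s(\mathbb{B}^N)$ with $u \geq 0$, each set $\Omega_c := \{x \in \mathbb{B}^N : u(x) > c\}$ has finite measure for every $c > 0$, and the standard relationship between polarization of a function and of its superlevel sets transfers the hypothesis $u^H \in \{u, u \circ \sigma_H\}$ to the set-theoretic dichotomy $(\Omega_c)^H \in \{\Omega_c, \sigma_H(\Omega_c)\}$, valid for every closed totally geodesic hypersurface $H$ with $e \notin H$. It therefore suffices to prove (i) every such $\Omega_c$ coincides a.e.\ with a geodesic ball $B(x_c, r_c)$, and (ii) the center $x_c$ is independent of $c$.

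For (i), I would invoke the hyperbolic counterpart of Van Schaftingen's approximation theorem for the Schwarz symmetrization by iterated polarizations: there exists a sequence $\{H_k\}_{k \geq 1}$ such that, setting $A_0 := \Omega_c$ and $A_{k+1} := A_k^{H_{k+1}}$, the iterates $A_k$ converge in $L^1$ to the hyperbolic Schwarz symmetrization $A^*$ of $\Omega_c$ about $e$, which is a geodesic ball centered at $e$ of the same measure. Under the dichotomy hypothesis each passage $A_k \mapsto A_{k+1}$ is either the identity or the reflection $\sigma_{H_{k+1}}$, so $A_k = T_k(\Omega_c)$ for some isometry $T_k \in I(\mathbb{B}^N)$. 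Fix a point $p$ of positive Lebesgue density of $\Omega_c$; since $T_k(\Omega_c) \to A^*$ in $L^1$ and $A^*$ is bounded, $T_k(p)$ stays in a fixed compact set along a subsequence, and the compactness of the stabilizer of $p$ in $I(\mathbb{B}^N)$ yields a convergent subsequence $T_{k_j} \to T$ in the isometry group. Passing to the limit gives $\Omega_c = T^{-1}(A^*)$ a.e., a geodesic ball. I expect this approximation-plus-compactness step to be the main technical obstacle.

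For (ii), assume two levels $c_1 > c_2 > 0$ give distinct centers $x_1 \neq x_2$. Pick a hypersurface $H$ with $e \notin H$ that separates $x_1$ from $x_2$. Because $\sigma_H$ is distance non-increasing between opposite half-spaces, a direct computation on the balls $B(x_1, r_1)$ and $B(x_2, r_2)$ shows that for each $i \in \{1, 2\}$, exactly one of $(\Omega_{c_i})^H = \Omega_{c_i}$ or $(\Omega_{c_i})^H = \sigma_H(\Omega_{c_i})$ holds strictly, determined by which side of $H$ contains $x_i$. Since $x_1$ and $x_2$ lie on opposite sides of $H$, these choices are opposite, forcing the functional hypothesis to supply both $u^H = u$ and $u^H = u \circ \sigma_H$, hence $u = u \circ \sigma_H$. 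But then $\Omega_{c_1}$ would be $\sigma_H$-symmetric, contradicting $x_1 \notin H$. Thus $x_1 = x_2 =: x_0$ for all $c$, and $v(r) := \inf\{c > 0 : r_c \leq r\}$ is a non-increasing function on $(0, \infty)$ with $u(x) = v(d(x, x_0))$ almost everywhere.
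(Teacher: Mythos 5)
Your proposal is correct in spirit but follows a genuinely different route from the paper's, and several steps need more work than you indicate.

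The paper's proof is considerably more elementary: it introduces the auxiliary functional
\[
W(x)=\int_{\mathbb{B}^N} u(y)\, w\bigl(T_x(y)\bigr)\,\mathrm{d}V_{\mathbb{B}^N}(y),
\]
where $w$ is a strictly radially decreasing kernel in the H\"older-conjugate space, shows that $W$ is continuous and vanishes at infinity (hence attains a maximum at some $x_0$), and then uses the maximality of $W(x_0)$ together with Lemma~\ref{aidlemma} to conclude that $u^H=u$ for \emph{every} hyperplane $H$ with $x_0\in H^+$: if instead $u^H=u\circ\sigma_H$ one rewrites $\int u^H\,w(T_{x_0}\cdot)=\int u\,w(T_{\sigma_H(x_0)}\cdot)\le W(x_0)$ and invokes Lemma~\ref{aidlemma} to force $u^H=u$ anyway. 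Lemma~\ref{invCha} then finishes. No approximation theorem and no compactness in the isometry group are needed. Your superlevel-set route reduces to a classical ``every superlevel set is a ball with common center'' statement, which is a well-trodden alternative (in the spirit of Brock--Solynin and Van Schaftingen's earlier work), but it requires importing a hyperbolic version of the polarization approximation theorem and a geometric compactness argument that the paper deliberately avoids.

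Concrete gaps you would have to close. First, you quote ``the hyperbolic counterpart of Van Schaftingen's approximation theorem'' without a reference; the cited \cite{JSch1} is Euclidean, and while the hyperbolic version is true, it is a nontrivial ingredient that must be sourced or proved. Second, the compactness step is not airtight as written: the claim that $T_k(p)$ stays bounded because $p$ is a density point of $\Omega_c$ does not follow directly. What one actually needs is a quantitative ``bulk'' argument: choose $R$ with $|\Omega_c\setminus B(0,R)|<\varepsilon$, use $|T_k(\Omega_c)\triangle A^*|\to 0$ to deduce $B(T_k(0),R)\cap A^*\neq\emptyset$ for large $k$, hence $d(T_k(0),0)\le R+\mathrm{rad}(A^*)$, and only then invoke properness of the $I(\mathbb{B}^N)$-action. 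Third, the passage ``each $A_k\mapsto A_{k+1}$ is identity or reflection'' implicitly uses that the dichotomy hypothesis is inherited by $T_k(\Omega_c)$; this requires the commutation identity $\bigl(T(\Omega)\bigr)^H=T\bigl(\Omega^{T^{-1}H}\bigr)$ (with the positive halfspace transported accordingly) and the observation that the dichotomy is symmetric under swapping $H^+$ and $H^-$, neither of which you state. Finally, in step (ii) your separating-hyperplane argument needs $\Omega_{c_1}$ and $\Omega_{c_2}$ to have positive measure (so the centers are well defined and $\sigma_H(\Omega_{c_i})\neq\Omega_{c_i}$ when $x_i\notin H$); restricting to levels in the essential range fixes this, but should be said. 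With these repairs the argument does go through, but it is substantially heavier than the paper's $W$-maximization trick.
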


To establish Lemma \ref{symLem}, we begin by proving the following result (refer also to \cite{JSch1}).

\begin{lemma} \label{aidlemma}
Let $u \in L^s(\mathbb{B}^N)$ and $w \in L^t(\mathbb{B}^N)$ with $\frac{1}{s} + \frac{1}{t} = 1$ be a radial function such that for every $x, y \in \mathbb{B}^N$ with $d(x, 0) \leq d(y, 0)$, $w(x) \geq w(y)$, with equality if and only if $d(x, 0) = d(y, 0)$. Let $H \subset \mathbb{B}^N$ be a closed, totally geodesic hypersurface. If $y_0 \in H^+$ and
\begin{equation*}
\int_{\mathbb{B}^N} u^H(x) w(T_{y_0}(x)) \leq \int_{\mathbb{B}^N} u(x)w(T_{y_0}(x)), 
\end{equation*}
then $u = u^H$.
\end{lemma}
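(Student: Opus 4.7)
The strategy is to express both integrals as integrals over the single half-space $H^+$ via the isometric reflection $\sigma_H$, and then compare the two integrands pointwise. First, I would use that $\sigma_H$ is an isometry (so preserves $\dvg$) and that $\sigma_H(H^+) = H^-$ to rewrite, for $\phi \in \{u, u^H\}$,
\begin{equation*}
\int_{\bn} \phi(x)\, w(T_{y_0}(x)) \dvg(x) = \int_{H^+} \bigl[\phi(x)\, w(T_{y_0}(x)) + \phi(\sigma_H(x))\, w(T_{y_0}(\sigma_H(x)))\bigr] \dvg(x).
\end{equation*}
Subtracting the $\phi = u$ version from the $\phi = u^H$ version and using that on $H^+$ one has $u^H(x) = \max(u(x), u(\sigma_H(x)))$ and $u^H(\sigma_H(x)) = \min(u(x), u(\sigma_H(x)))$, a short algebraic identity collapses the difference to
\begin{equation*}
\int_{H^+} \bigl(u(\sigma_H(x)) - u(x)\bigr)^{+} \bigl(w(T_{y_0}(x)) - w(T_{y_0}(\sigma_H(x)))\bigr) \dvg(x),
\end{equation*}
where $(\cdot)^{+}$ denotes the positive part.

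Next, I would establish that the weight factor $w(T_{y_0}(x)) - w(T_{y_0}(\sigma_H(x)))$ is strictly positive for almost every $x \in H^+$. Since $T_{y_0}$ is a M\"obius isometry with $T_{y_0}(y_0) = 0$, we have $d(T_{y_0}(z), 0) = d(z, y_0)$ for every $z \in \bn$, so by the strict radial monotonicity of $w$ it suffices to prove $d(x, y_0) < d(\sigma_H(x), y_0)$ for every $x \in H^+ \setminus H$. The geodesic from $\sigma_H(x)$ to $y_0$ must cross $H$ at some point $z$, whence the triangle inequality together with the identity $d(\sigma_H(x), z) = d(x, z)$ (because $\sigma_H$ is an isometry fixing $H$ pointwise) yields
\begin{equation*}
d(\sigma_H(x), y_0) = d(x, z) + d(z, y_0) \geq d(x, y_0),
\end{equation*}
with strict inequality because $H^+$ is geodesically convex (as $H$ is totally geodesic), so the minimizing geodesic from $x \in H^+ \setminus H$ to $y_0 \in H^+ \setminus H$ stays strictly inside $H^+$ and therefore cannot pass through the point $z \in H$.

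Finally, combining these two computations: the hypothesis forces the nonnegative integrand produced in the first step to integrate to zero on $H^+$, and since the weight factor is strictly positive almost everywhere on $H^+$ by the second step, we must have $(u(\sigma_H(x)) - u(x))^{+} = 0$ for a.e.\ $x \in H^+$, i.e.\ $u(x) \geq u(\sigma_H(x))$ a.e.\ on $H^+$. This is precisely the condition $u = u^H$ almost everywhere on $\bn$. The only non-routine ingredient here is the strict monotonicity $d(\sigma_H(x), y_0) > d(x, y_0)$ on $H^+ \setminus H$, which relies on the totally geodesic character of $H$ and the isometry property of $T_{y_0}$; once that geometric fact is in place, the rest is a clean rearrangement identity.
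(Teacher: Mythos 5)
Your proof is correct and follows essentially the same route as the paper: fold the integral over $H^+$ using the reflection $\sigma_H$, exploit the strict inequality $w(T_{y_0}(x)) > w(T_{y_0}(\sigma_H(x)))$ for $x\in H^+\setminus H$, and conclude $u=u^H$ from the forced vanishing of a nonnegative integrand. The paper performs this via a two-case comparison and a pointwise equality argument, whereas you compress the same content into the single algebraic identity with the positive part $(u(\sigma_H(x))-u(x))^{+}$; these are equivalent presentations. One genuine added value of your write-up is that you actually prove the geometric fact $d(\sigma_H(x),y_0) > d(x,y_0)$ for $x,y_0\in H^+\setminus H$ using the convexity of a totally geodesic half-space and the identity $d(T_{y_0}(z),0)=d(z,y_0)$, whereas the paper simply asserts this inequality without justification. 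Both of those supporting facts are correct, so your argument is complete.
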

\begin{proof}
We will divide the proof into the following two distinct cases for $x \in H^+$.

\smallskip
\noindent
\textbf{Case I} $u(x) \leq u(\sigma_H(x))$. In this case, we obtain
\begin{equation*}
        u^H(x)= u(\sigma_{H}(x)),u^H(\sigma_{H}(x))=u(x).
    \end{equation*}
Additionally, for any $x \in H^+$, we observe that since $y_0 \in H^+$, $d(\sigma_H(x), y_0) > d(x, y_0)$, or equivalently, $d(T_{y_0}(\sigma_H(x)), 0) > d(T_{y_0}(x), 0)$. Thus, $w(T_{y_0}(x)) > w(T_{y_0}(\sigma_H(x)))$.

\noindent
\medskip
Now, consider
\begin{align*}
    [u^{H}(x)-u(x)]w(T_{y_0}(x))&=[u(\sigma_H(x))-u^H(\sigma_H(x))] w(T_{y_0}(x))\\
    &\geq [u(\sigma_H(x))-u^H(\sigma_H(x))] w\left(T_{y_0}(\sigma_{H}(x)\right),
    \end{align*}
   which leads to the inequality
\begin{equation}
u(x) w(T_{y_0}(x))+u\left(\sigma_H(x)\right) w\left(T_{y_0}(\sigma_{H}(x)\right) \leq u^H(x) w(T_{y_0}(x))+u^H\left(\sigma_H(x)\right) w(T_{y_0}(\sigma_{H}(x)).  \label{PolIneq1}
\end{equation}
\textbf{Case II} $u(x) \geq u(\sigma_H(x))$.\\
In this case, we arrive at
\begin{equation*}
u(x) w(T_{y_0}(x))+u\left(\sigma_H(x)\right) w\left(T_{y_0}(\sigma_{H}(x)\right)= u^H(x) w(T_{y_0}(x))+u^H\left(\sigma_H(x)\right) w\left(T_{y_0}(\sigma_{H}(x)\right). 
\end{equation*}
Thus, inequality \eqref{PolIneq1} holds in either case. By integrating this inequality over $H^+$, we obtain
\begin{equation}
\int_{\mathbb{B}^N} u^H(x) w(T_{y_0}(x)) \geq \int_{\mathbb{B}^N} u(x)w(T_{y_0}(x)). \label{PolIneqMain}
\end{equation}
Hence, based on the given assumption, we have for almost every $x \in H^+$
\begin{equation}
u(x) w(T_{y_0}(x))+u\left(\sigma_H(x)\right) w\left(T_{y_0}(\sigma_{H}(x)\right)= u^H(x) w(T_{y_0}(x))+u^H\left(\sigma_H(x)\right) w\left(T_{y_0}(\sigma_{H}(x)\right). \label{PolyEq}
\end{equation}
Since $y_0 \in H^+$, $d(\sigma_H(x), y_0) > d(x, y_0)$, and noting that $w$ is radially decreasing, we have $w(T_{y_0}(x)) > w\left(T_{y_0}(\sigma_H(x))\right)$. Therefore, it must be that $u^H(x) = u(x)$ and $u^H(\sigma_H(x)) = u(\sigma_H(x))$, because otherwise, we would fall under Case I and derive a strict inequality in \eqref{PolIneq1}, contradicting \eqref{PolyEq}.
\end{proof}

\begin{lemma}\label{invCha}
Let $u \in L^s(\mathbb{B}^N)$. If for every closed, totally geodesic hypersurface $H \subset \mathbb{B}^N$ with $x_0 \in H^+$, we have $u^H = u$, then there exists a non-increasing function $v: (0, \infty) \rightarrow \mathbb{R}$ such that $u(x) = v(d(x, x_0))$ for almost every $x \in \mathbb{B}^N$.
\end{lemma}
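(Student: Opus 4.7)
The idea is to convert the polarization invariance hypothesis into an almost-everywhere monotonicity statement in the radial variable based at $x_0$, and then deduce that $u$ is radial and non-increasing about $x_0$. First I would exploit the following geometric fact: for any two distinct points $x, y \in \mathbb{B}^N$ with $d(x, x_0) < d(y, x_0)$, the perpendicular bisector $H_{xy}$ of the geodesic segment joining $x$ and $y$ is a closed, totally geodesic hypersurface satisfying $\sigma_{H_{xy}}(x) = y$; moreover, the component of $\mathbb{B}^N \setminus H_{xy}$ containing $x$ coincides with $\{z : d(z, x) < d(z, y)\}$, which also contains $x_0$. Thus, designating this component as $H_{xy}^+$ in accordance with the paper's convention (so that $x_0 \in H_{xy}^+$), we have $x \in H_{xy}^+$ and the hypothesis $u^{H_{xy}} = u$ forces the pointwise inequality $u(x) \geq u(\sigma_{H_{xy}}(x)) = u(y)$ in an a.e.\ sense. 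A Fubini-type argument, viewing the family of bisector hypersurfaces as parametrized by the pair $(x, y)$, promotes this to: for almost every pair $(x, y) \in \mathbb{B}^N \times \mathbb{B}^N$ with $d(x, x_0) < d(y, x_0)$, one has $u(x) \geq u(y)$.

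Next I would extract the radial profile. Define
\[
\phi(r) = \underset{d(z, x_0) = r}{\operatorname{ess\,sup}}\, u(z), \qquad \psi(r) = \underset{d(z, x_0) = r}{\operatorname{ess\,inf}}\, u(z), \qquad r > 0.
\]
The a.e.\ monotonicity from the previous step yields $\psi(r) \geq \phi(s)$ whenever $r < s$; in particular, both $\phi$ and $\psi$ are non-increasing, and passing to the right-limit in $s$ gives $\phi(r+) \leq \psi(r) \leq \phi(r)$. Since $\phi$ is monotone, it is continuous outside a countable set, so $\phi(r+) = \phi(r)$, hence also $\psi(r) = \phi(r)$, for a.e.\ $r$. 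Consequently $u$ is essentially constant on almost every geodesic sphere centered at $x_0$. Setting $v := \phi$ (extended arbitrarily at the countable exceptional set) produces a non-increasing function $v : (0, \infty) \to \mathbb{R}$ with $u(x) = v(d(x, x_0))$ for a.e.\ $x \in \mathbb{B}^N$, as desired.

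The principal obstacle will be making the first step rigorous. The identity $u^H = u$ holds in $L^s$, so the pointwise inequality $u(z) \geq u(\sigma_H(z))$ is valid only outside an exceptional null set $N_H$ that a priori depends on $H$. Upgrading this family of $H$-by-$H$ a.e.\ statements into a single joint a.e.\ statement over pairs $(x, y)$ requires a careful Fubini argument in which the parametrization of the bisector hypersurfaces by $(x, y) \in \mathbb{B}^N \times \mathbb{B}^N$ is matched against the isometry-invariant product measure on $\mathbb{B}^N \times \mathbb{B}^N$. Once this measure-theoretic reduction is accomplished, the radial-symmetry conclusion is a standard one-dimensional monotonicity argument.
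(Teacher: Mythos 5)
Your approach is essentially the same as the paper's: both reduce the hypothesis $u^H=u$ (for all closed totally geodesic $H$ with $x_0 \in H^+$) to radial monotonicity about $x_0$ via the perpendicular-bisector construction, and then extract a non-increasing profile. The paper does this by factoring through a four-way equivalence involving the Schwarz symmetrization $u^*$ and proves the implication corresponding to your Step~1 in a single line for a \emph{fixed} pair $x,y$, implicitly sweeping the $H$-dependent exceptional null sets under the rug; you flag that subtlety explicitly, which is the right instinct, and you extract $v$ directly via essential sup/inf on geodesic spheres instead of passing through $u^*$, a more elementary and more careful route. The Fubini step you identify as the principal obstacle is indeed what the paper glosses over; it goes through by parametrizing the admissible bisectors by the foot of the perpendicular dropped from $x_0$ (a smooth $N$-parameter family in $\mathbb{B}^N\setminus\{x_0\}$) and using the change of variables $(z,H)\mapsto(z,\sigma_H(z))$ to match the product measure on $\mathbb{B}^N\times\mathbb{B}^N$, after which the countable union/co-null intersection argument yields the a.e.\ pairwise monotonicity. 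Your Step~2 is sound once one notes that the resulting inequality $\psi(r)\geq\phi(s)$ need only hold for a.e.\ $(r,s)$ with $r<s$, which still forces $\phi=\psi$ off a countable set by monotone continuity.
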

For a function $u: \mathbb{B}^N \rightarrow \mathbb{R}^{+} \cup {+\infty}$, the symmetric rearrangement, also known as Schwarz symmetrization, $u^*: \mathbb{B}^N \rightarrow \mathbb{R}^{+} \cup {+\infty}$, is defined as the unique function such that for every $\lambda > 0$, there exists $R \geq 0$ satisfying
\begin{equation*}
\left\{x \in \mathbb{B}^N : u^*(x) > \lambda\right\} = B_R(0),
\end{equation*}
and
\begin{equation*}
\mu\left\{x \in \mathbb{B}^N : u^*(x) > \lambda\right\} = \mu\left\{x \in \mathbb{B}^N : u(x) > \lambda\right\},
\end{equation*}
where $B_R(0)$ denotes the geodesic ball in $\mathbb{B}^N$ centered at $0$ with radius $R$, and $\mu$ denotes the volume measure in $\mathbb{B}^N$.

Thus, $u^*$ is a radial and radially decreasing function whose superlevel sets have the same measure as those of $u$.

Clearly, Lemma \ref{invCha} follows from the subsequent lemma:
\begin{lemma}
Let $s \geq 1$ and $u \in L^s(\mathbb{B}^N)$ be non-negative. The following conditions are equivalent:
\begin{enumerate}
\item $u^* = u$.
\item For almost every $x, y \in \mathbb{B}^N$, if $d(x,0) \leq d(y,0)$, then $u(x) \geq u(y)$.
\item For every closed, totally geodesic hypersurface $H$ such that $0 \in H^+$, $u(x) \geq u(\sigma_H(x))$, for almost every $x \in H^+$.
\item For every closed, totally geodesic hypersurface $H \subset \mathbb{B}^N$ such that $0 \in H^+$, $u^H = u$.
\end{enumerate}
\end{lemma}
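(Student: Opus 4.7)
The plan is to prove the cycle of implications $(1) \Rightarrow (2) \Rightarrow (3) \Rightarrow (4) \Rightarrow (1)$, which then yields equivalence of all four conditions. Each step reduces to either the definition of the Schwarz rearrangement, the definition of the polarization $u^H$, or a short geometric fact about reflections across totally geodesic hypersurfaces in $\mathbb{B}^N$.

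For $(1) \Rightarrow (2)$, I would invoke the defining property of $u^*$: every super-level set $\{u^* > \lambda\}$ is a geodesic ball $B_{R(\lambda)}(0)$, so $u^*$ is monotone non-increasing in $d(\cdot,0)$, and condition (2) is then immediate from $u = u^*$ almost everywhere. For $(2) \Rightarrow (3)$, the key geometric observation is that if $0 \in H^+$ and $x \in H^+$, then $d(x,0) \leq d(\sigma_H(x),0)$. This follows because $\sigma_H$ is an isometry with $\sigma_H(0) \in H^-$, so $d(\sigma_H(x),0) = d(x, \sigma_H(0))$, and since $H$ is by definition the perpendicular bisector of the geodesic from $0$ to $\sigma_H(0)$, the point $x$ on the same side as $0$ satisfies $d(x,0) \leq d(x,\sigma_H(0))$. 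Condition (2) then yields $u(x) \geq u(\sigma_H(x))$ for a.e.\ $x \in H^+$. For $(3) \Rightarrow (4)$, I would verify directly from the definition of $u^H$: on $H$ the equality is by definition, on $H^+$ one has $u^H(x) = \max(u(x), u(\sigma_H(x))) = u(x)$ by (3), and on $H^-$ one has $u^H(x) = \min(u(x), u(\sigma_H(x))) = u(x)$ by applying (3) at $\sigma_H(x) \in H^+$.

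For $(4) \Rightarrow (1)$, I would route back through $(4) \Rightarrow (3) \Rightarrow (2) \Rightarrow (1)$. The first implication is tautological from the definition of $u^H$. For $(3) \Rightarrow (2)$, given $x, y$ with $d(x,0) < d(y,0)$, I would take $H$ to be the perpendicular bisector of the geodesic segment joining $x$ and $y$; this is a closed totally geodesic hypersurface in $\mathbb{B}^N$ with $\sigma_H(x) = y$, and the hypothesis $d(x,0) < d(y,0)$ forces $0$ to lie in the component $H^+$ containing $x$, so (3) applied to this $H$ yields $u(x) \geq u(y)$. The boundary case $d(x,0) = d(y,0)$ reduces to the strict case by an approximation within the a.e. framework. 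Finally, $(2) \Rightarrow (1)$ proceeds via super-level sets: condition (2) forces each $\{u > \lambda\}$ to coincide, up to a null set, with a geodesic ball centered at $0$, since containment of any $x$ forces containment of every point closer to $0$; equating the measures of the super-level sets of $u$ and $u^*$ then gives $u = u^*$ a.e.

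The main conceptual step is the geometric construction in $(3) \Rightarrow (2)$, namely that the perpendicular bisector of a geodesic segment in $\mathbb{B}^N$ is indeed a closed totally geodesic hypersurface to which the polarization apparatus applies; this is standard in hyperbolic geometry, since such bisectors are precisely the totally geodesic hypersurfaces meeting the given geodesic orthogonally at its midpoint. Once this geometric fact is invoked, the remaining measure-theoretic identification in $(2) \Rightarrow (1)$ is routine, and the tautological nature of $(3) \Leftrightarrow (4)$ makes the overall argument compact.
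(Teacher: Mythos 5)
Your proposal is correct and follows essentially the same route as the paper: $(1)\Leftrightarrow(2)$ via comparison of superlevel sets, $(2)\Leftrightarrow(3)$ via the observation that $H$ is the perpendicular bisector of a geodesic segment (so that reflecting across $H$ moves points in $H^+$ farther from the origin, and conversely any pair $x,y$ at different distances from $0$ is swapped by some such $H$), and $(3)\Leftrightarrow(4)$ immediately from the definition of $u^H$. The only cosmetic difference is that you arrange the argument as a cycle $(1)\Rightarrow(2)\Rightarrow(3)\Rightarrow(4)$ together with the retraced chain $(4)\Rightarrow(3)\Rightarrow(2)\Rightarrow(1)$, thereby duplicating two implications, whereas the paper presents the three two-way equivalences directly; the content is the same.
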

\begin{proof}
The implication $\textit{(1)} \Rightarrow \textit{(2)}$ is straightforward. To prove $\textit{(2)} \Rightarrow \textit{(1)}$, since $u$ is radially decreasing, there exists a decreasing function $v: [0, +\infty) \rightarrow \mathbb{R}$ such that $u(x) = v(d(x,0))$. Given that $u \in L^s(\mathbb{B}^N)$, it is evident that $\lim_{r \rightarrow +\infty} v(r) = 0$. Hence, $u \geq 0$ and for all $\lambda > 0$, $\mu\left\{x \in \mathbb{B}^N : u^*(x) > \lambda\right\} = \mu\left\{x \in \mathbb{B}^N : u(x) > \lambda\right\}$, which implies $u = u^*$.

\medskip
There is nothing to prove for $\textit{(2)} \Rightarrow \textit{(3)}$. To prove $\textit{(3)} \Rightarrow \textit{(2)}$, let $x, y \in \mathbb{B}^N$ be such that $x \neq y$ and $d(x,0) \leq d(y,0)$. There exists a closed, totally geodesic hypersurface $H$ such that $x \in H^+$ and $y = \sigma_H(x)$. By assumption, we have $u(y) = u(\sigma_H(x)) \leq u(x)$.

\medskip
Finally, the equivalence $\textit{(3)} \iff \textit{(4)}$ follows from the definition of $u^H$.
\end{proof}

\begin{proof}[\textbf{Proof of Lemma \ref{symLem}}] Select $w \geq 0$ satisfying the assumptions of Lemma \ref{aidlemma}. For $x \in \mathbb{B}^N$, define the function 
\begin{equation*}
W(x)=\int_{\bn} u(y) w(T_{x}(y))\dvg(y).
\end{equation*}

This function is nonnegative, continuous, and $\lim_{d(x,0) \rightarrow \infty} W(x) = 0$, indicating it attains a maximum at some point $x_0 \in \bn$. 

To finalize the proof using Lemma \ref{invCha}, we need to show that for every closed, totally geodesic hypersurface $H \subset \mathbb{B}^N$ where $x_0$ is in $H^+$, the equality $u^H = u$ holds. Specifically, if $u^H = u \circ \sigma_H$, then, considering that $w$ is radial and based on the definition of $x_0$,
\begin{equation*}
\begin{aligned}
\int_{\bn} u^H(y) w\left(T_{x_0}(y)\right) \dvg(y) & =\int_{\bn} u\left(\sigma_H(y)\right) w\left(T_{x_0}(y)\right)\dvg(y) \\
& =\int_{\bn} u(y) w\left(T_{\sigma_H(x_0)}(y)\right)\dvg(y)\\
& \leqslant \int_{\bn} u(y) w\left(T_{x_0}(y)\right)\dvg(y).
\end{aligned}
\end{equation*}
Therefore, by Lemma \ref{aidlemma}, we have $u^H = u$.
\end{proof}
\begin{proof}[\textbf{Proof of Proposition \ref{RadSymm}}]
Consider a closed, totally geodesic hypersurface $H \subset \mathbb{B}^N$. Observe the following equalities:
\begin{equation*}
\int_{\mathbb{B}^N} \left|\nabla_{\bn} u^H\right|^2 = \int_{\mathbb{B}^N} \left|\nabla_{\bn} u\right|^2 \text{and}
\int_{\mathbb{B}^N} \left|u^H\right|^2 = \int_{\mathbb{B}^N} \left|u\right|^2.
\end{equation*}
Based on the characterization of ground states, it follows that
\begin{equation*}
\int_{\mathbb{B}^N} \left[(- \Delta_{\mathbb{B}^N})^{-\frac{\alpha}{2}} |u|^p(y)\right] |u|^p(y) \dvg(y) \geq \int_{\mathbb{B}^N} \left[(- \Delta_{\mathbb{B}^N})^{-\frac{\alpha}{2}} |u^H|^p(y)\right] |u^H|^p(y) \dvg(y).
\end{equation*}

According to Lemma \ref{PolId}, for any closed, totally geodesic hypersurface $H \subset \mathbb{B}^N$, it holds that either $u^H = u$ or $u^H = u \circ \sigma_H$. Therefore, the conclusion can be drawn from Lemma \ref{symLem}.

\end{proof}
\end{section}
\begin{section}{Regularity}\label{regularity}
In this section, we examine the regularity of the solution to the problem \eqref{mainEq} following the approach of \cite{MS2}. To do this, we require a nonlocal analog of the estimate provided in \cite[Lemma 2.1]{BK} by Brezis and Kato, which can be established using the inequality stated in Lemma \ref{BKineqEss}. In particular, we prove the following result in this section.
\begin{theorem}\label{RegThm}
    Let $N \geq 3$, $\lambda < \frac{(N-1)^2}{4}$and $\alpha \in(0, N)$. If $u$ is a solution of \eqref{mainEq} for $\frac{N+\alpha}{N} \leq p \leq \frac{N+\alpha}{N-2}$, then $u \in L^q(\bn)$ for every $q \in\left[2, \frac{N}{\alpha} \frac{2 N}{N-2}\right)$. Moreover, if $2<p\leq \frac{N+\alpha}{N-2}$ and $1\leq \alpha <N$, then $u \in L^{\infty}(\bn)$.
    \end{theorem}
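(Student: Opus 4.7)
The plan is to adapt the nonlocal Brezis--Kato bootstrap of Moroz--Van Schaftingen \cite{MS2} to the hyperbolic setting, replacing the Riesz potential on $\Rn$ by the Green kernel $k_{\alpha,N}$ of \eqref{GreenFunc}, the Euclidean Hardy--Littlewood--Sobolev inequality by its manifold version \eqref{HlSIneqHybSp}, and the Euclidean Sobolev embedding by the Poincar\'e--Sobolev inequality \eqref{PoinSobIneq}. The starting point is that the hypothesis $\lambda<(N-1)^2/4$ makes $\|\cdot\|_\lambda$ equivalent to the usual $H^1$-norm, so that $u\in L^q(\bn)$ for every $q\in[2,2N/(N-2)]$ by interpolation.

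For the bootstrap step, fix $s\geq 1$ and a cutoff $K>0$, and test \eqref{mainEq} against the admissible function $u\,u_K^{2(s-1)}$ with $u_K:=\min(|u|,K)$. On the left, the standard Brezis--Kato expansion of $\nabla_{\bn}(u\,u_K^{2(s-1)})$ combined with \eqref{PoinSobIneq} yields a lower bound proportional to $\|u\,u_K^{s-1}\|_{2N/(N-2)}^{2}$. On the right, HLS \eqref{HlSIneqHybSp} applied with $s_{\mathrm{HLS}}=2N/(N+\alpha)$ gives
\begin{equation*}
\int_{\bn}\!\bigl[(-\Delta_{\bn})^{-\alpha/2}|u|^{p}\bigr]|u|^{p-2}u\cdot u\,u_K^{2(s-1)}\dvg\;\leq\;C\,\bigl\||u|^{p}\bigr\|_{\frac{2N}{N+\alpha}}\,\bigl\||u|^{p-1}u\,u_K^{2(s-1)}\bigr\|_{\frac{2N}{N+\alpha}},
\end{equation*}
after which H\"older splits the last factor into $\|u\,u_K^{s-1}\|_{2N/(N-2)}$, which is absorbed into the left-hand side, times a factor of the form $\|u\|_{q(s)}^{\theta}$ with some $q(s)<2s\cdot 2N/(N-2)$. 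Sending $K\to\infty$ via Fatou produces the desired bootstrap inequality; iterating it finitely many times starting from $u\in L^{2N/(N-2)}$ yields $u\in L^{q}(\bn)$ for every $q\in[2,\tfrac{N}{\alpha}\tfrac{2N}{N-2})$, the announced upper limit being precisely the threshold at which $s_{\mathrm{HLS}}<N/\alpha$ fails in HLS.

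For the $L^{\infty}$ assertion, suppose $2<p\leq(N+\alpha)/(N-2)$ and $1\leq\alpha<N$. Since $\alpha<N$ forces $p<2N/(N-2)$, the first part of the theorem provides $|u|^{p}\in L^{s}(\bn)$ for some $s>N/\alpha$. Plugging this into the semigroup representation \eqref{semiGpInvDefn}, using H\"older with $1/s+1/s'=1$, the on-diagonal bound \eqref{DiagEst} combined with $\|p_{t,N}(x,\cdot)\|_1=1$ to interpolate $\|p_{t,N}(x,\cdot)\|_{s'}\leq C t^{-N/(2s)}$, and invoking the exponential decay of the heat kernel from \eqref{HeatKerBounds} for the tail $t\geq 1$, one concludes that $(-\Delta_{\bn})^{-\alpha/2}|u|^{p}\in L^{\infty}(\bn)$; the small-$t$ integrability uses $s>N/\alpha$, while $\alpha\geq 1$ together with $p>2$ will be used to keep $V:=[(-\Delta_{\bn})^{-\alpha/2}|u|^{p}]|u|^{p-2}$ in every $L^{q}(\bn)$ with $q<\infty$. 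The equation \eqref{mainEq} then reads $-\Delta_{\bn}u-\lambda u=V(x)\,u$ with such a $V$, and a standard Brezis--Kato/Moser iteration (or linear elliptic regularity via $W^{2,q}$ estimates) yields $u\in L^{\infty}(\bn)$.

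The main obstacle will be the exponent bookkeeping in the bootstrap: one must verify at each step that the gain from $q(s)$ to $2s\cdot 2N/(N-2)$ is strict, and that the iterated sequence of admissible exponents reaches arbitrarily close to the announced upper limit $\tfrac{N}{\alpha}\tfrac{2N}{N-2}$ rather than stagnating earlier. A secondary technical point is justifying that $u\,u_K^{2(s-1)}$ is an admissible test function for \eqref{mainEq} and safely passing to the limit $K\to\infty$, given that $u$ is a priori only in $L^{2^{*}}(\bn)$ at the start of the iteration.
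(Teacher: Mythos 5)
Your proposal for the $L^q$ part is in the same \emph{spirit} as the paper --- a nonlocal Brezis--Kato bootstrap adapted from \cite{MS2} --- but the specific implementation you sketch has a gap. You test \eqref{mainEq} directly against $u\,u_K^{2(s-1)}$, apply HLS with the fixed exponent $s_{\mathrm{HLS}}=2N/(N+\alpha)$, and split the remaining factor via a single H\"older with the fixed conjugate pair $\left(\tfrac{2N}{N-2},\tfrac{2N}{\alpha+2}\right)$. That rigid split forces the iteration $s\mapsto 2^*s$ to run subject to $\frac{2N(p+s-2)}{\alpha+2}$ staying inside the already-known $L^q$ range, and in several regimes of $(\alpha,p)$ the resulting map has a finite attracting fixed point, so the iteration stalls well short of the announced ceiling $\tfrac{N}{\alpha}\cdot\tfrac{2N}{N-2}$. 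Also, your stated reason for that ceiling --- ``the threshold at which $s_{\mathrm{HLS}}<N/\alpha$ fails'' --- is not correct: $2N/(N+\alpha)<N/\alpha$ holds for every $\alpha<N$, so no such failure occurs. The paper reaches the full range because it does not work directly on the nonlinear equation: it linearizes by writing $H=K=|u|^{p-2}u\in L^{2N/\alpha}+L^{2N/(\alpha+2)}$, solves a perturbed \emph{linear} problem by Lax--Milgram, and then performs the truncated iteration using Lemma~\ref{BrezisKatoIneq} with $\theta=2/q$. The admissible window $\theta\in(\alpha/N,\,2-\alpha/N)$ is exactly what yields $q<2N/\alpha$, which after one Sobolev embedding gives $q<\tfrac{N}{\alpha}\tfrac{2N}{N-2}$. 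You flagged the bookkeeping as a concern, and it is: without the extra flexibility of the paper's two-parameter H\"older split (Lemma~\ref{BKineqEss}), the ceiling is not attained.

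For the $L^\infty$ part your route is genuinely different from the paper's and looks viable: you interpolate the heat kernel between its $L^1$ normalization and the on-diagonal bound \eqref{DiagEst} to get $\|p_{t,N}(x,\cdot)\|_{s'}\lesssim t^{-N/(2s)}$, use $s>N/\alpha$ for the small-$t$ tail and the spectral-gap decay in \eqref{HeatKerBounds} for $t\geq 1$, and conclude $(-\Delta_{\bn})^{-\alpha/2}|u|^p\in L^\infty$ directly from the semigroup representation. The paper instead pulls explicit pointwise bounds on $k_{\alpha,N}$ (power-law near $\rho=0$, exponential decay for $\rho\geq 1$) from \cite{LLY}, splits $k_{\alpha,N}=\chi_{B_R}k_{\alpha,N}+\chi_{B_R^\complement}k_{\alpha,N}$ into two Lebesgue classes, and finishes by H\"older; it then cites Theorem~4.1 of \cite{BS} to pass from $-\Delta_{\bn}u-\lambda u=f\,|u|^{p-2}u$ with $f\in L^\infty$ and $2<p<\tfrac{2N}{N-2}$ to $u\in L^\infty$. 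Your semigroup approach is a legitimate alternative that avoids quoting the $k_{\alpha,N}$ asymptotics, but your last sentence over-claims: $V=[(-\Delta_{\bn})^{-\alpha/2}|u|^p]\,|u|^{p-2}\in L^q$ for \emph{every} $q<\infty$ would require $u\in L^{q(p-2)}$ for all $q$, which the first part does not give. You only need $V\in L^q$ for a single $q>N/2$ (or, more cleanly, invoke the subcritical local nonlinearity result as the paper does); state that fixed target rather than ``every $q$''.
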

    To establish the preceding theorem, we need the following auxiliary lemmas.
\begin{lemma} \label{BrezisKatoIneq}
    Let $N \geq 2, \alpha \in(0,N)$ and $\theta \in(0,2)$. If $H, K \in L^{\frac{2 N}{\alpha+2}}\left(\mathbb{B}^{N}\right)+$ $L^{\frac{2 N}{\alpha}}\left(\mathbb{B}^{N}\right)$ and $\frac{\alpha}{N}<\theta<2-\frac{\alpha}{N}$, then for every $\varepsilon>0$, there exists $C_{\varepsilon, \theta} \in \mathbb{R}$ such that for every $u \in H^{1}\left(\mathbb{B}^{N}\right)$,
\noindent
$$
\int_{\mathbb{B}^{N}}\left((- \Delta_{\mathbb{B}^{N}})^{-\frac{\alpha}{2}}\left(H|u|^{\theta}\right)\right) K|u|^{2-\theta}\dvg \leq \varepsilon^{2} \int_{\mathbb{B}^{N}}|\nabla_{\bn} u|^{2}\dvg+C_{\varepsilon, \theta}\int_{\mathbb{B}^{N}}|u|^{2}\dvg.
$$
\end{lemma}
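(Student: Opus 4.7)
The plan is to combine a density-driven splitting of $H$ and $K$ with the hyperbolic Hardy-Littlewood-Sobolev inequality of Theorem~\ref{HLSinqHyb}, H\"older's inequality, and Sobolev interpolation on $\bn$. Given $\eta>0$ (to be fixed later in terms of $\varepsilon$), I use the density of $L^{\frac{2N}{\alpha+2}}(\bn)\cap L^{\frac{2N}{\alpha}}(\bn)$ in $L^{\frac{2N}{\alpha+2}}(\bn)$ (obtained by truncating above height $M$) to write $H=H_1+H_2$ and $K=K_1+K_2$ with $\|H_1\|_{\frac{2N}{\alpha+2}},\|K_1\|_{\frac{2N}{\alpha+2}}\le\eta$ and $H_2,K_2\in L^{\frac{2N}{\alpha}}(\bn)$. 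The left-hand side then expands into four cross terms
\begin{equation*}
I_{ij}:=\int_{\bn}\left[(-\Delta_{\bn})^{-\alpha/2}(H_i|u|^\theta)\right]K_j|u|^{2-\theta}\,\dvg,\qquad i,j\in\{1,2\},
\end{equation*}
which I would treat separately.

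For each $I_{ij}$ the same three-step chain applies. First, $L^{r}$-$L^{r'}$ duality combined with Theorem~\ref{HLSinqHyb} yields
\begin{equation*}
\int_{\bn}\left[(-\Delta_{\bn})^{-\alpha/2}f\right]g\,\dvg\le C\,\|f\|_s\|g\|_{r'}\quad\text{whenever}\quad\tfrac{1}{s}+\tfrac{1}{r'}=1+\tfrac{\alpha}{N},\ \ s,r'\in(1,N/\alpha).
\end{equation*}
Second, H\"older with exponents matched to $p_i\in\{\tfrac{2N}{\alpha+2},\tfrac{2N}{\alpha}\}$ peels off $\|H_i\|_{p_i}\|K_j\|_{p_j}$ and leaves $\|u\|_{r_H}^{\theta}\|u\|_{r_K}^{2-\theta}$ with $r_H,r_K\in[2,2^*]$, where $2^{*}=\frac{2N}{N-2}$. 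Third, the Sobolev interpolation $\|u\|_r\le\|u\|_2^{1-\tau}\|u\|_{2^*}^{\tau}$ combined with $\|u\|_{2^*}\le C_0\|\nabla_{\bn}u\|_2$---a consequence of \eqref{PoinSobIneq} and the spectral gap \eqref{firsteigen}---turns the product into $\|u\|_2^{a_{ij}}\|\nabla_{\bn}u\|_2^{b_{ij}}$ with $a_{ij}+b_{ij}=2$. A short scaling computation gives $b_{11}=2$, $b_{22}=0$, and $b_{12}=b_{21}=1$.

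Assembling, $I_{22}\lesssim\|H_2\|_{\frac{2N}{\alpha}}\|K_2\|_{\frac{2N}{\alpha}}\|u\|_2^2$ is already of the desired form; $I_{11}\lesssim\eta^2\|\nabla_{\bn}u\|_2^2$ is absorbed by choosing $\eta$ small enough that $C\eta^2\le\varepsilon^2/2$; and the mixed terms $I_{12}+I_{21}\lesssim\eta(\|H_2\|_{\frac{2N}{\alpha}}+\|K_2\|_{\frac{2N}{\alpha}})\|u\|_2\|\nabla_{\bn}u\|_2$ are split by Young's inequality $|ab|\le\delta a^2+(4\delta)^{-1}b^2$ with $\delta\sim\varepsilon^2$ into a $\tfrac{\varepsilon^2}{2}\|\nabla_{\bn}u\|_2^2$ contribution and an $L^2$-remainder absorbed into $C_{\varepsilon,\theta}$.

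The main obstacle is parametric: in each of the four cases I must simultaneously verify that the HLS exponents $s,r'$ lie in $(1,N/\alpha)$ and that the induced $r_H,r_K$ lie in $[2,2^*]$, and this is precisely where the hypothesis $\theta\in(\alpha/N,2-\alpha/N)$ is used essentially. In the symmetric cases the sum constraint forces $\theta\tau_H+(2-\theta)\tau_K\in\{0,2\}$, which pins the interpolation weights to the boundary (giving $r_H=r_K=2^*$ for $I_{11}$ and $r_H=r_K=2$ for $I_{22}$); then $s>1$ follows from $\theta<2-\alpha/N$, while $s<N/\alpha$ follows from $\theta>\alpha/N$, using the comparison $\alpha/N>\tfrac{\alpha-2}{N-2}$ valid for $\alpha<N$ and $N\ge 3$. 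In the mixed cases one has the one-parameter family of admissible weights determined by $\theta\tau_H+(2-\theta)\tau_K=1$, and a short case analysis---setting $\tau_H=1$ when $\theta\le 1$ and $\tau_K=1$ when $\theta\ge 1$---shows the family always contains a point satisfying every range requirement.
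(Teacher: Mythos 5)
Your overall strategy is the right one, and it is effectively the same as what the paper relies on: the paper gives no proof of Lemma~\ref{BrezisKatoIneq}, but points to Moroz--Schaftingen \cite{MS2}, whose argument is precisely the splitting $H=H_1+H_2$, $K=K_1+K_2$ with $\|H_1\|_{2N/(\alpha+2)},\|K_1\|_{2N/(\alpha+2)}\le\eta$ and $H_2,K_2\in L^{2N/\alpha}$, four cross-terms estimated via HLS $+$ H\"older $+$ Sobolev interpolation, absorption of the gradient term, and Young's inequality on the mixed terms. Your identification of the exponents $b_{11}=2$, $b_{22}=0$, $b_{12}=b_{21}=1$ is correct (the constraint $\tfrac{1}{s}+\tfrac{1}{r}=1+\tfrac{\alpha}{N}$ together with the two H\"older weights $\tfrac{\alpha+2}{2N}$ and $\tfrac{\alpha}{2N}$ forces $\theta\tau_H+(2-\theta)\tau_K=1$), and the treatment of $I_{11}$ and $I_{22}$ is sound.

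The one place where the argument as written would break down is the specific parameter recipe for the mixed terms. You claim that setting $\tau_H=1$ when $\theta\le 1$ and $\tau_K=1$ when $\theta\ge 1$ always lands in the admissible range, but this ignores that $I_{12}$ and $I_{21}$ are not symmetric: the $\eta$-small factor sits in $L^{2N/(\alpha+2)}$ while the other sits in $L^{2N/\alpha}$, so the H\"older weights attached to the two factors are different. Concretely, for $I_{21}$ (where $H_2\in L^{2N/\alpha}$ carries the $\theta$-power and $K_1\in L^{2N/(\alpha+2)}$ carries the $(2-\theta)$-power), with $\theta\le 1$ and $\tau_H=1$ one finds
\begin{equation*}
\frac{1}{s}=\frac{\alpha}{2N}+\frac{\theta(N-2)}{2N},
\qquad\text{and } s<\frac{N}{\alpha}\iff\theta>\frac{\alpha}{N-2},
\end{equation*}
which is \emph{not} implied by $\theta>\alpha/N$ (for $\alpha<N-2$ the window $(\tfrac{\alpha}{N},\tfrac{\alpha}{N-2})\subset(0,1)$ is nonempty). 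Similarly, for $I_{12}$ with $\theta\ge 1$ and $\tau_K=1$ one needs $\theta<2-\tfrac{\alpha}{N-2}$, again strictly stronger than the hypothesis $\theta<2-\alpha/N$. The underlying existence claim is nevertheless true: writing everything in terms of $x:=\theta/r_H$, the constraints $r_H,r_K\in[2,2^*]$, $\tfrac{\theta}{r_H}+\tfrac{2-\theta}{r_K}=\tfrac{N-1}{N}$, and $\tfrac{\alpha}{N}<\tfrac{1}{s}<1$ reduce to intersecting three intervals, and one can check directly that under $\alpha/N<\theta<2-\alpha/N$ this intersection is always nonempty (in the failing case above, for instance, $\tau_H=0$, i.e.\ $r_H=2$, works). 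So the approach and the final conclusion are correct, but you should either carry out the interval intersection for each of $I_{12}$, $I_{21}$ separately, or, as the paper does, simply invoke the general interpolation Lemma~\ref{BKineqEss} (the analogue of \cite[Lemma 3.3]{MS2}), which packages exactly this bookkeeping.
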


\begin{lemma}\label{BKineqEss}
Let $N\geq 2$ and $0<\alpha<N$. Suppose $q, r, s, t \in[1, \infty)$ and $\tau \in[0,2]$ satisfy the following conditions:
    \begin{equation*}
1+\frac{\alpha}{N}-\frac{1}{s}-\frac{1}{t}=\frac{\tau}{q}+\frac{2-\tau}{r}
\end{equation*}
If $\theta \in(0,2)$ satisfies
\noindent
$$
\begin{gathered}
\min (q, r)\left(\frac{\alpha}{N}-\frac{1}{s}\right)<\theta<\max (q, r)\left(1-\frac{1}{s}\right), \\
\min (q, r)\left(\frac{\alpha}{N}-\frac{1}{t}\right)<2-\theta<\max (q, r)\left(1-\frac{1}{t}\right),
\end{gathered}
$$
\noindent
then for every $H \in L^{s}\left(\mathbb{B}^{N}\right)$, $K \in L^{t}\left(\mathbb{B}^{N}\right)$ and $u \in L^{q}\left(\mathbb{B}^{N}\right) \cap L^{r}\left(\mathbb{B}^{N}\right)$,
$$\int_{\mathbb{B}^{N}}\left((- \Delta_{\mathbb{B}^{N}})^{-\frac{\alpha}{2}}\left(H|u|^{\theta}\right)\right) K|u|^{2-\theta} \leq C\|H\|_{L^s(\bn)} \|K\|_{L^t(\bn)} \|u\|_{L^q(\bn)}^{\tau}\|u\|_{L^r(\bn)}^{2-\tau}.$$
\end{lemma}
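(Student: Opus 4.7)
The plan is to chain the hyperbolic Hardy-Littlewood-Sobolev inequality of Theorem \ref{HLSinqHyb} with two successive applications of H\"older and one interpolation of $u$ between $L^q(\bn)$ and $L^r(\bn)$. By Theorem \ref{HLSinqHyb} together with $L^{p_1}$-$L^{p_1'}$ duality, for exponents $p_1,p_2\in(1,\infty)$ with $\frac{\alpha}{N}<\frac{1}{p_1},\frac{1}{p_2}<1$ and $\frac{1}{p_1}+\frac{1}{p_2}=1+\frac{\alpha}{N}$, one has the bilinear bound
$$\int_{\bn}\phi\,(-\Delta_{\bn})^{-\alpha/2}\psi\,\dvg \leq C\|\phi\|_{p_1}\|\psi\|_{p_2}.$$
I would apply this with $\phi=K|u|^{2-\theta}$ and $\psi=H|u|^{\theta}$, reducing the task to estimating $\|H|u|^\theta\|_{p_2}$ and $\|K|u|^{2-\theta}\|_{p_1}$ in terms of $\|H\|_s$, $\|K\|_t$, and the $L^q, L^r$ norms of $u$.

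Next I would isolate $H$ and $K$ by a second H\"older step: choose $m,n$ with $\frac{1}{p_2}=\frac{1}{s}+\frac{1}{m}$ and $\frac{1}{p_1}=\frac{1}{t}+\frac{1}{n}$, giving $\|H|u|^\theta\|_{p_2}\leq\|H\|_s\,\|u\|_{m\theta}^{\theta}$ and $\|K|u|^{2-\theta}\|_{p_1}\leq\|K\|_t\,\|u\|_{n(2-\theta)}^{2-\theta}$. Summing the defining relations for $m$ and $n$, the HLS sum constraint becomes $\frac{1}{m}+\frac{1}{n}=1+\frac{\alpha}{N}-\frac{1}{s}-\frac{1}{t}$, which by hypothesis equals $\frac{\tau}{q}+\frac{2-\tau}{r}$. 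Finally, I interpolate $\|u\|_{m\theta}$ and $\|u\|_{n(2-\theta)}$ between $\|u\|_q$ and $\|u\|_r$ via parameters $\lambda_1,\lambda_2\in[0,1]$ with
$$\frac{1}{m\theta}=\frac{\lambda_1}{q}+\frac{1-\lambda_1}{r}, \qquad \frac{1}{n(2-\theta)}=\frac{\lambda_2}{q}+\frac{1-\lambda_2}{r}.$$
Adding these relations (weighted by $\theta$ and $2-\theta$ respectively) and comparing with the identity just derived forces $\lambda_1\theta+\lambda_2(2-\theta)=\tau$; consequently the powers of $\|u\|_q$ and $\|u\|_r$ arising from the two interpolations combine to $\|u\|_q^{\tau}\|u\|_r^{2-\tau}$, as claimed.

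The main obstacle, and really the only nontrivial part, is the bookkeeping that reconciles the interpolation constraint $\lambda_i\in[0,1]$ with the HLS admissibility of $p_1,p_2$. Concretely, $\lambda_1\in[0,1]$ forces $\frac{1}{m}\in\theta\cdot\bigl[\tfrac{1}{\max(q,r)},\tfrac{1}{\min(q,r)}\bigr]$, while admissibility of $p_2$ in Theorem \ref{HLSinqHyb} demands $\frac{\alpha}{N}-\frac{1}{s}<\frac{1}{m}<1-\frac{1}{s}$. The intersection of these two intervals is nonempty precisely when $\min(q,r)\bigl(\tfrac{\alpha}{N}-\tfrac{1}{s}\bigr)<\theta<\max(q,r)\bigl(1-\tfrac{1}{s}\bigr)$, which is exactly the first pair of hypotheses; the second pair arises identically from analyzing $\frac{1}{n}$ with $2-\theta$ and $t$. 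A final routine check, using that the achievable sums of $\frac{1}{m}+\frac{1}{n}$ over these intersected intervals cover the target value $\frac{\tau}{q}+\frac{2-\tau}{r}$ (by the extreme cases $\tau\in\{0,2\}$ and convexity), secures a joint choice of $(m,n,\lambda_1,\lambda_2)$ and closes the estimate.
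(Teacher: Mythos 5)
Your proof is correct and follows the same scheme the paper invokes by citing Moroz and Schaftingen \cite[Lemma 3.3]{MS2}: the bilinear HLS bound from Theorem \ref{HLSinqHyb} via duality, two H\"older splittings, and interpolation of $\|u\|_{m\theta}$, $\|u\|_{n(2-\theta)}$ between $L^q$ and $L^r$. For the ``final routine check,'' it helps to note that the identity $1+\tfrac{\alpha}{N}-\tfrac{1}{s}-\tfrac{1}{t}=\tfrac{\tau}{q}+\tfrac{2-\tau}{r}$ does two jobs at once: it makes $\tfrac{1}{m}+\tfrac{1}{n}$ equal the target automatically for every $\lambda_1,\lambda_2$ obeying $\theta\lambda_1+(2-\theta)\lambda_2=\tau$, and it renders the HLS window for $\tfrac{1}{n}=T-\tfrac{1}{m}$ \emph{identical} to that for $\tfrac{1}{m}$ (since $T-\bigl(1-\tfrac{1}{t}\bigr)=\tfrac{\alpha}{N}-\tfrac{1}{s}$ and $T-\bigl(\tfrac{\alpha}{N}-\tfrac{1}{t}\bigr)=1-\tfrac{1}{s}$), so the two displayed hypotheses do secure a joint admissible choice even at the extremes $\tau\in\{0,2\}$ where $\lambda_1,\lambda_2$ are pinned.
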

The proofs of the above two lemmas follow exactly on the same lines as \cite[Lemma 3.2, 3.3]{MS2}, utilizing the Hölder and HLS inequalities repeatedly. Consequently, we omit the proofs here. 
\begin{proposition} \label{ExtLp}
     If $H, K \in L^{\frac{2 N}{\alpha}}\left(\mathbb{B}^{N}\right)+L^{\frac{2 N}{\alpha+2}}\left(\mathbb{B}^{N}\right)$, $\lambda<\frac{(N-1)^2}{4}$, and $u \in H^{1}\left(\mathbb{B}^{N}\right)$ solves
\begin{equation*}
-\Delta_{\bn}u-\lambda u=\left((- \Delta_{\mathbb{B}^{N}})^{-\frac{\alpha}{2}} H u\right) K
\end{equation*}
then $u \in L^{p}\left(\mathbb{B}^{N}\right)$ for every $p \in\left[2, \frac{N}{\alpha} \frac{2 N}{N-2}\right)$.
\end{proposition}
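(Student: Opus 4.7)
The plan is to carry out a nonlocal Brezis--Kato bootstrap in the spirit of \cite[Proposition 4.1]{MS2}, with the Euclidean Hardy--Littlewood--Sobolev inequality replaced by its hyperbolic avatars supplied by Lemma \ref{BrezisKatoIneq} and Lemma \ref{BKineqEss}. Since $\lambda<\frac{(N-1)^2}{4}$, the norm $\|\cdot\|_{\lambda}$ is equivalent to the $H^1(\mathbb{B}^N)$-norm, so the bilinear form $(u,v)\mapsto \int_{\mathbb{B}^N}(\nabla_{\mathbb{B}^N}u\cdot\nabla_{\mathbb{B}^N}v-\lambda uv)\,\dvg$ is coercive; this is the essential input that makes the absorption argument work. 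Starting from the seed regularity $u\in L^2(\mathbb{B}^N)\cap L^{2N/(N-2)}(\mathbb{B}^N)$ provided by $u\in H^1(\mathbb{B}^N)$ and \eqref{PoinSobIneq}, I would construct an increasing sequence $p_k\nearrow \frac{2N^2}{\alpha(N-2)}$ and inductively prove $u\in L^{p_k}(\mathbb{B}^N)$.

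The induction step is implemented as follows. For $\theta\geq 1$ to be chosen, truncate $u_n:=\min(|u|,n)\,\mathrm{sgn}(u)$ and test the equation against $|u_n|^{2(\theta-1)}u\in H^1(\mathbb{B}^N)$, which is admissible because $u_n$ is bounded. The chain rule, applied separately on $\{|u|<n\}$ and $\{|u|\geq n\}$, produces a Caccioppoli-type identity whose principal part can be bounded below in the equivalent norm $\|\cdot\|_\lambda$, yielding
\begin{equation*}
c_\theta\,\bigl\||u_n|^{\theta-1}u\bigr\|_\lambda^{2}\;\leq\;\int_{\mathbb{B}^N}\Bigl((-\Delta_{\mathbb{B}^N})^{-\alpha/2}(Hu)\Bigr)\,K\,|u_n|^{2(\theta-1)}u\,\dvg,
\end{equation*}
with $c_\theta>0$ independent of $n$. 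The right-hand side is then recast into the framework of Lemma \ref{BKineqEss} by absorbing the bounded factors of $|u_n|$ into $H$ and $K$, which remain in the decomposition space $L^{2N/\alpha}(\mathbb{B}^N)+L^{2N/(\alpha+2)}(\mathbb{B}^N)$. Splitting $H=H_\varepsilon+H^\varepsilon$ and $K=K_\varepsilon+K^\varepsilon$ so that the $L^{2N/\alpha}$-parts have norm less than $\varepsilon$ while the $L^{2N/(\alpha+2)}$-parts remain controlled, one bounds the right-hand side by $\varepsilon\,\bigl\||u_n|^{\theta-1}u\bigr\|_\lambda^{2}+C_{\varepsilon}\,\bigl\||u_n|^{\theta-1}u\bigr\|_{L^{s(\theta)}}^{2}$ for an exponent $s(\theta)$ depending on the integrability of $u$ already obtained. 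Absorbing the first term on the left and invoking \eqref{PoinSobIneq} together with Fatou's lemma as $n\to\infty$ delivers $u\in L^{q(\theta)}(\mathbb{B}^N)$ with $q(\theta)$ strictly larger than the previously known exponent.

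Tracking the recursion $q\mapsto q(\theta)$ and selecting at each step the largest admissible $\theta$ compatible with the constraints of Lemma \ref{BKineqEss} (most notably $\alpha/N<\theta<2-\alpha/N$ together with the balance condition on $(s,t,q,r,\tau)$), the resulting sequence of exponents converges monotonically to $\frac{N}{\alpha}\cdot\frac{2N}{N-2}$. The main technical obstacle I foresee is the book-keeping of all admissibility constraints on $(\theta,s,t,q,r,\tau)$ throughout the iteration: one must verify that the window allowing $\varepsilon$-absorption stays open for every $q$ strictly below $\frac{2N^2}{\alpha(N-2)}$, while this window pinches down as $q$ approaches the limit. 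This shrinking is precisely what forces the strict inequality at the upper endpoint in the statement of the proposition, and handling the endpoint behaviour carefully is the delicate part of the argument.
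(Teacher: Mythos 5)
Your overall strategy — a nonlocal Brezis--Kato bootstrap via Moser-type test functions — matches the intent of the paper's proof, but you have skipped the paper's central technical device and, as a consequence, left a genuine gap in the argument.

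The paper does not test the original equation for $u$ directly. Instead it first approximates $H, K$ by sequences $H_k, K_k \in L^{\frac{2N}{\alpha}}(\mathbb{B}^N)$ with $|H_k|\leq H$, $|K_k|\leq K$, uses the $\theta=1$ case of Lemma~\ref{BrezisKatoIneq} to show that the bilinear form $a_k(w,v)=\int(\nabla_{\mathbb{B}^N}w\cdot\nabla_{\mathbb{B}^N}v-\lambda wv+\beta wv)-\int((-\Delta_{\mathbb{B}^N})^{-\alpha/2}H_kw)K_kv$ is coercive and bounded, and then invokes Lax--Milgram to produce a regularized solution $u_k$ of a perturbed problem with the extra forcing $\beta u$. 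The iteration is carried out on $u_k$ (through truncations $u_{k,\gamma}$), and one only passes to the limit $k\to\infty$ at the end. The purpose of this detour is exactly to control the term you cannot control: when you test the original equation against $|u_n|^{2(\theta-1)}u$, the right-hand side is $\int((-\Delta_{\mathbb{B}^N})^{-\alpha/2}(Hu))\,K|u_n|^{2(\theta-1)}u$, where the argument of the inverse fractional Laplacian carries the \emph{untruncated} $u$ while the test function carries the truncated power. Your sentence ``absorbing the bounded factors of $|u_n|$ into $H$ and $K$'' cannot bridge this: $Hu$ is not of the form $H|u_n|^{\theta}$, and the discrepancy lives exactly on the bad set $\{|u|>n\}$ where you have no pointwise control. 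The paper isolates this remainder as $\int_{A_{k,\gamma}}((-\Delta_{\mathbb{B}^N})^{-\alpha/2}|K_k||u_k|^{q-1})|H_ku_k|$, bounds it with the HLS inequality using the inductive hypothesis $u_k\in L^q$, and sends it to zero as $\gamma\to\infty$ before closing the absorption. You need an analogue of that step, and your proposal does not provide one.

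Two smaller points. First, the $\varepsilon$-absorption is the content of Lemma~\ref{BrezisKatoIneq}, applied with $\theta=2/q$ (and the variable $|u_{k,\gamma}|^{q/2}$), not of Lemma~\ref{BKineqEss}; the splitting $H=H_\varepsilon+H^\varepsilon$, $K=K_\varepsilon+K^\varepsilon$ you describe is the internal mechanism of that lemma, so you are proposing to reprove it rather than invoke it — fine in principle, but then say so. Second, the admissibility window you worry about is actually clean: $\theta=2/q\in(\alpha/N,2-\alpha/N)$ is equivalent to $\frac{2N}{2N-\alpha}<q<\frac{2N}{\alpha}$, so for every $q\in[2,\frac{2N}{\alpha})$ one gets $u\in L^{\frac{Nq}{N-2}}$, and finitely many iterations (or a single step when $\alpha\geq N-2$) cover the full range $[2,\frac{N}{\alpha}\cdot\frac{2N}{N-2})$; there is no pinching phenomenon of the kind you describe beyond the open upper endpoint coming from $q<\frac{2N}{\alpha}$.
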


\begin{proof}
    By setting $\theta = 1$ in Lemma \ref{BrezisKatoIneq}, there exists a constant $\beta > 0$ such that $\beta > 2\lambda$ and

\begin{equation}
\int_{\mathbb{B}^{N}}\left((- \Delta_{\mathbb{B}^{N}})^{-\frac{\alpha}{2}}|H \phi|\right) |K\phi|  \leq \frac{1}{2} \int_{\mathbb{B}^{N}}|\nabla_{\bn} \phi|^{2} \dvg+\frac{\beta}{2} \int_{\mathbb{B}^{N}}|\phi|^{2} \dvg \label{BKuse}
\end{equation}
for all $\phi \in H^{1}(\mathbb{B}^{N})$. Consider the sequences $\left\{H_{k}\right\}_{k \in \mathbb{N}}$ and $\left\{K_{k}\right\}_{k \in \mathbb{N}}$ in $L^{\frac{2N}{\alpha}}(\mathbb{B}^{N})$ such that $\left|H_{k}\right| \leq H$, ${H_{k}(x)} \to H(x)$ almost everywhere, and $\left|K_{k}\right| \leq K$, ${K_{k}(x)} \to K(x)$ almost everywhere in $\mathbb{B}^N$. For each $k \in \mathbb{N}$, define $a_{k} \colon H^{1}(\mathbb{B}^{N}) \times H^{1}(\mathbb{B}^{N}) \to \mathbb{R}$ such that for $w, v \in H^{1}(\mathbb{B}^{N})$
\begin{align*}
a_{k}(w, v):= & \int_{\mathbb{B}^{N}} \nabla_{\bn} w \cdot \nabla_{\bn} v - \lambda  \int_{\mathbb{B}^{N}} w v+\beta \int_{\mathbb{B}^{N}} w v - \int_{\mathbb{B}^{N}}\left((- \Delta_{\mathbb{B}^{N}})^{-\frac{\alpha}{2}} H_{k} w\right) K_{k} v.
\end{align*}
Using \eqref{firsteigen} and \eqref{BKuse}, we obtain
\begin{align*}
&a_{k}(v, v)\\
&\geq \int_{\mathbb{B}^{N}}|\nabla_{\bn} v|^{2} \dvg+\left(\beta-\lambda\right) \int_{\mathbb{B}^{N}}|v|^{2} \dvg -\left(\frac{1}{2} \int_{\mathbb{B}^{N}}|\nabla_{\bn} v|^{2} \dvg+\frac{\beta}{2} \int_{\mathbb{B}^{N}}|v|^{2} \dvg\right)\\
&\geq \;\frac{1}{2} \int_{\mathbb{B}^{N}}|\nabla_{\bn} v|^{2} \dvg+\left(\frac{\beta}{2}-\lambda\right)\int_{\mathbb{B}^{N}}|v|^{2} \dvg \geq C \|v\|_{\lambda}^2.
\end{align*}
Furthermore, by applying Hölder's inequality to the last term of $a_k(w,v)$, with the exponents chosen such that $\frac{1}{r_1} = \frac{1}{2} - \frac{\alpha}{2N}$ and $\frac{1}{r_2} = \frac{\alpha}{2N} + \frac{1}{2}$, and then employing the Hardy-Littlewood-Sobolev inequality, the boundedness of the bilinear form can be easily established. Consequently, $a_k$ is a bounded and coercive bilinear form. Therefore, by invoking the Lax-Milgram theorem for the functional $f: H^{1}\left(\mathbb{B}^{N}\right) \rightarrow \mathbb{R}$ defined as:
\noindent
$$
f(v)=\int_{\mathbb{B}^{N}}\beta\;u \;v \dvg \quad \forall \;v \in H^{1}\left(\mathbb{B}^{N}\right),
$$
\noindent
there exists unique $u_{k} \in H^{1}\left(\mathbb{B}^{N}\right)$ such that $a_{k}\left(u_{k}, v\right)=f(v)$ holds for every $v \in H^{1}\left(\mathbb{B}^{N}\right)$.  Consequently, $u_{k}$
satisfies the equation
\begin{equation}
-\Delta_{\bn} u_{k}-\lambda u_{k}+\beta u_{k}=\left((- \Delta_{\mathbb{B}^{N}})^{-\frac{\alpha}{2}} H_{k} u_{k}\right) K_{k}+\beta u \text { in } \mathbb{B}^{N}. \label{PertProb}
\end{equation}
It follows that the sequence $\left\{u_{k}\right\} \rightharpoonup u$ in $H^{1}\left(\mathbb{B}^{N}\right)$ as $k \rightarrow \infty$. 
For any $\gamma>0$, we define the truncated function $u_{k, \gamma}$ by
$$
u_{k, \gamma}(x)= \begin{cases}-\gamma & \text { if } u_{k}(x) \leq-\gamma, \\ u_{k}(x) & \text { if }-\gamma<u_{k}(x)<\gamma, \\ \gamma & \text { if } u_{k}(x) \geq \gamma.\end{cases}
$$

By choosing the test function $v(x)=\left|u_{k, \gamma}\right|^{q-2} u_{k, \gamma} \in H^{1}\left(\mathbb{B}^{N}\right)$ as a test function in \eqref{PertProb}, we get
$$
\begin{aligned}
& (q-1) \int_{\mathbb{B}^{N}}\left|u_{k, \gamma}\right|^{q-2} \nabla_{\bn} u_{k} \cdot \nabla_{\bn} u_{k, \gamma} \dvg+\left(\beta - \lambda \right)\int_{\mathbb{B}^{N}}\left|u_{k, \gamma}\right|^{q-2} u_{k} u_{k, \gamma} \dvg \\
& = \int_{\mathbb{B}^{N}}\left((- \Delta_{\mathbb{B}^{N}})^{-\frac{\alpha}{2}} H_{k} u_{k}\right) K_{k}\left|u_{k, \gamma}\right|^{q-2} u_{k, \gamma} \dvg+ \beta \int_{\mathbb{B}^{N}} u \left|u_{k, \gamma}\right|^{q-2} u_{k, \gamma} \dvg .
\end{aligned}
$$
Additionally, by the construction of $u_{k, \gamma}$, we have $\left|u_{k, \gamma}\right|^{q} \leq\left|u_{k, \gamma}\right|^{q-2} u_{k, \gamma} u_{k}$. Furthermore, using the relation $\left|\nabla_{\bn} u(x)\right|_{\bn}^2=\left(\frac{1-|x|^2}{2}\right)^2 \left|\nabla u(x)\right|^2$, we get

\begin{align*}
& (q-1) \int_{\mathbb{B}^{N}}\left|u_{k, \gamma}\right|^{q-2}\left|\nabla_{\bn} u_{k, \gamma}\right|^{2} \dvg+\left(\beta-\lambda\right)\int_{\mathbb{B}^{N}}\left|u_{k, \gamma}\right|^{q-2} u_{k, \gamma} u_{k} \dvg \\
& \geq \frac{4(q-1)}{q^{2}} \int_{\mathbb{B}^{N}}\left|\nabla_{\bn}\left(u_{k, \gamma}\right)^{\frac{q}{2}}\right|^{2} \dvg+\left(\beta-\lambda\right) \int_{\mathbb{B}^{N}}\left|\left| u_{k, \gamma}\right|^{\frac{q}{2}}\right|^{2} \dvg.
\end{align*}
Thus, we have
\begin{align*}
& \frac{4(q-1)}{q^{2}} \int_{\mathbb{B}^{N}}\left|\nabla_{\bn}\left(u_{k, \gamma}\right)^{\frac{q}{2}}\right|^{2} \dvg+\left(\beta-\lambda\right) \int_{\mathbb{B}^{N}}\left|\left| u_{k, \gamma}\right|^{\frac{q}{2}}\right|^{2} \dvg \\
\leq & \;(q-1) \int_{\mathbb{B}^{N}}\left|u_{k, \gamma}\right|^{q-2} \nabla_{\bn} u_{k, \gamma} \cdot \nabla_{\bn} u_{k} \dvg+\left(\beta-\lambda\right) \int_{\mathbb{B}^{N}}\left|u_{k, \gamma}\right|^{q-2} u_{k, \gamma} u_{k} \dvg \\
= &  \int_{\mathbb{B}^{N}}\left((- \Delta_{\mathbb{B}^{N}})^{-\frac{\alpha}{2}} H_{k} u_{k}\right) K_{k}\left|u_{k, \gamma}\right|^{q-2} u_{k, \gamma} \dvg+ \beta \int_{\mathbb{B}^{N}} u \left|u_{k, \gamma}\right|^{q-2} u_{k, \gamma} \dvg.
\end{align*}
For $2\leq q<\frac{2 N}{\alpha}$, selecting $\theta=\frac{2}{q} \in \left(\frac{\alpha}{N}, 2-\frac{\alpha}{N}\right)$ and $u=u_{k, \gamma}$ in Lemma \ref{BrezisKatoIneq}, we obtain
\begin{align*}
&  \int_{\mathbb{B}^{N}}\left((- \Delta_{\mathbb{B}^{N}})^{-\frac{\alpha}{2}}H_{k} u_{k, \gamma}\right) K_{k}\left|u_{k, \gamma}\right|^{q-2} u_{k, \gamma} \leq \frac{2(q-1)}{q^{2}} \int_{\mathbb{B}^{N}}\left|\nabla_{\bn}\left(u_{k, \gamma}\right)^{\frac{q}{2}}\right|^{2}+C\int_{\mathbb{B}^{N}}\left|\left| u_{k, \gamma}\right|^{\frac{q}{2}}\right|^{2}.
\end{align*}
Now, setting $A_{k, \gamma}=\left\{x \in \mathbb{B}^{N}:\left|u_{k}(x)\right|>\gamma\right\}$, we derive
\begin{align*}
& \frac{4(q-1)}{q^{2}}\int_{\mathbb{B}^{N}}\left|\nabla_{\bn}\left(u_{k, \gamma}\right)^{\frac{q}{2}}\right|^{2} \dvg+\left(\beta - \lambda\right)\int_{\mathbb{B}^{N}}\left|\left| u_{k, \gamma}\right|^{\frac{q}{2}}\right|^{2} \dvg\\
&-C \int_{\mathbb{B}^{N}}\left|\left| u_{k, \gamma}\right|^{\frac{q}{2}}\right|^{2} \dvg -\frac{2(q-1)}{q^{2}}\int_{\mathbb{B}^{N}}\left|\nabla_{\bn}\left( u_{k, \gamma}\right)^{\frac{q}{2}}\right|^{2} \dvg \\
& \leq 2 \int_{A_{k, \gamma}}\left((- \Delta_{\mathbb{B}^{N}})^{-\frac{\alpha}{2}}\left|K_{k}\right|\left|u_{k}\right|^{q-1}\right)\left|H_{k} u_{k}\right|\dvg+\beta \int_{\mathbb{B}^{N}}\left|u_{k, \gamma}\right|^{q-2} u_{k, \gamma} u \dvg.
\end{align*}
This implies
\begin{equation}
\begin{aligned}
& \frac{2(q-1)}{q^{2}}\int_{\mathbb{B}^{N}}\left|\nabla_{\bn}\left( u_{k, \gamma}\right)^{\frac{q}{2}}\right|^{2} \dvg \\
& \leq 2  \int_{A_{k, \gamma}}\left((- \Delta_{\mathbb{B}^{N}})^{-\frac{\alpha}{2}}\left|K_{k}\right|\left|u_{k}\right|^{q-1}\right)\left|H_{k} u_{k}\right|\dvg+C\left[\int_{\mathbb{B}^{N}}\left|u_{k, \gamma}\right|^{q-2} u_{k, \gamma}u + \left|u_{k, \gamma}\right|^{q} \dvg\right] \\
& \leq 2  \int_{A_{k, \gamma}}\left((- \Delta_{\mathbb{B}^{N}})^{-\frac{\alpha}{2}}\left|K_{k}\right|\left|u_{k}\right|^{q-1}\right)\left|H_{k} u_{k}\right|\dvg+C\left[\int_{\mathbb{B}^{N}}\left|u_{k}\right|^{q}+|u|^{q}\dvg \right].
\end{aligned} \label{11a}
\end{equation}
Applying the Hardy-Littlewood-Sobolev inequality gives
\begin{equation*}
\int_{A_{k, \gamma}}\left((- \Delta_{\mathbb{B}^{N}})^{-\frac{\alpha}{2}}\left|K_{k}\right|\left|u_{k}\right|^{q-1}\right)\left|H_{k} u_{k}\right| \leq C\left(\int_{\mathbb{B}^{N}}\left|\left| K_{k}\right|\left| u_{k}\right|^{q-1}\right|^{r}\right)^{\frac{1}{r}}\left(\int_{A_{k, \gamma}}\left|H_{k} u_{k}\right|^{s}\right)^{\frac{1}{s}}
\end{equation*}
where $\frac{1}{r}=\frac{\alpha}{2N}+1-\frac{1}{q}$ and $\frac{1}{s}=\frac{\alpha}{2N}+\frac{1}{q}$. Assuming $u_{k} \in L^{q}\left(\mathbb{B}^{N}\right)$, it follows from Hölder's inequality that $\left|K_{k}\right|\left|u_{k}\right|^{q-1} \in L^{r}\left(\mathbb{B}^{N}\right)$ and $\left|H_{k} u_{k}\right| \in L^{s}\left(\mathbb{B}^{N}\right)$. As a result, we have
\begin{equation*}
 \int_{A_{k, \gamma}}\left((- \Delta_{\mathbb{B}^{N}})^{-\frac{\alpha}{2}}\left|K_{k}\right|\left|u_{k}\right|^{q-1}\right)\left|H_{k} u_{k}\right| \rightarrow 0 \text { as } \gamma \rightarrow \infty.
\end{equation*}
By taking the limit $\gamma \rightarrow \infty$ in \eqref{11a}, we derive
\begin{equation*}
\frac{2(q-1)}{q^{2}}\int_{\mathbb{B}^{N}}\left|\nabla_{\bn}\left( u_{k}\right)^{\frac{q}{2}}\right|^{2}\dvg \leq C\left(\int_{\mathbb{B}^{N}}\left|u_{k}\right|^{q}+|u|^{q}\dvg\right).
\end{equation*}
This indicates that $\left|u_{k}\right|^{\frac{q}{2}} \in H^{1}\left(\mathbb{B}^{N}\right) \hookrightarrow L^{\frac{2 N}{N-2}}\left(\mathbb{B}^{N}\right)$. Therefore, by Fatou's lemma
\noindent
$$
\begin{aligned}
\left(\int_{\mathbb{B}^{N}}\left|\left| u_{k}\right|^{\frac{q}{2}}\right|^{\frac{2 N}{N-2}}\dvg\right)^{\frac{2(N-2)}{2 N}} & \leq C\left\|\left(u_{k}\right)^\frac{q}{2}\right\|_\lambda^{2} \leq C\left(\int_{\mathbb{B}^{N}}\left|u_{k}\right|^{q}+|u|^{q}\dvg\right) \\
& \leq C\left(\int_{\mathbb{B}^{N}}\left|u_{k}\right|^{q}\dvg+\limsup_{k \rightarrow \infty} \int_{\mathbb{B}^{N}}\left|u_{k}\right|^{q}\dvg\right).
\end{aligned}
$$
\noindent
Thus, we conclude
\noindent
$$
\limsup _{k \rightarrow \infty}\left(\int_{\mathbb{B}^{N}}\left|u_{k}\right|^{\frac{N q}{N-2}}\dvg\right)^{1-\frac{2}{N}} \leq C \limsup _{k \rightarrow \infty} \int_{\mathbb{B}^{N}}\left|u_{k}\right|^{q}\dvg.
$$
\noindent
By iterating the above argument finitely many times on $q$, we obtain $u \in L^{r}\left(\mathbb{B}^{N}\right)$ for all $r \in\left[2, \frac{2 N}{\alpha} \frac{N}{N-2}\right)$. 
\end{proof}

\begin{proof}[Proof of Theorem \ref{RegThm}]
 Let $u(x)$ be a solution of \eqref{mainEq}. Since $\frac{N+\alpha}{N} \leq p \leq \frac{N+\alpha}{N-2}$, we obtain
 \noindent
$$
|u(x)|^{p-1} \leq\left(|u(x)|^{\frac{\alpha}{N}}+|u(x)|^{\frac{\alpha+2}{N-2}}\right).
$$
\noindent
Setting $H(x)=K(x)=|u(x)|^{p-2} u(x)$, it follows that $H, K \in L^{\frac{2 N}{\alpha}}\left(\mathbb{B}^{N}\right)+L^{\frac{2 N}{\alpha+2}}\left(\mathbb{B}^{N}\right)$. Therefore, by Proposition \ref{ExtLp}, we conclude that $u \in L^{q}\left(\mathbb{B}^{N}\right)$ for $q \in\left[2, \frac{N}{\alpha} \frac{2 N}{N-2}\right)$.

We will now demonstrate that $u \in L^{\infty}(\bn)$.  To establish this, we begin by claiming that  $(-\Delta_{\mathbb{B}^{N}})^{-\frac{\alpha}{2}} |u|^{p} \in L^{\infty}\left(\bn\right)$. \\
To justify this claim, observe that for $\beta \in \mathbb{R}$, $R>0$, we have 
\begin{align*}
    \int_{B_R(0)}\frac{1}{(\sinh (d(x,0)))^\beta}\dvg(x)&= \omega_{N-1} \int_0^{R}\frac{(\sinh \rho)^{N-1}}{(\sinh \rho)^\beta} \mathrm{~d}\rho\\
    &\leq \omega_{N-1}\int_0^{R}\frac{(\sinh \rho)^{N-1}\cosh \rho}{(\sinh \rho)^\beta} \mathrm{~d}\rho,
\end{align*}
where $\omega_{N-1}$ is the volume of $\mathbb{S}^{N-1}$. Consequently, this integral converges when $\beta<N$ Similarly, if $\beta >N$, then $$\int_{\left(B_R(0)\right)^{\complement}}\frac{1}{(\sinh (d(x,0)))^\beta}\dvg(x) < \infty.$$ \\
Using the estimates from (4.1) in \cite{LLY}, we obtain:

\smallskip
\noindent
For $0<\rho<1$, $0<\alpha<N$
\begin{align*}
    k_{\alpha,N}(\rho)\leq C \frac{1}{\rho^{N-\alpha}}\leq C(N,\alpha) \frac{1}{(\sinh \rho)^{N-\alpha}}.
\end{align*}
Further, for $\rho\geq 1$ and $\alpha>2$ 
\begin{align*}
    k_{\alpha,N}(\rho)\leq C\rho^{\frac{\alpha-2}{2}} e^{-(N-1)\rho}\leq C\rho^{\alpha-2}e^{-(N-1)\rho}\leq C e^{-\rho(N-\alpha)}\leq C  \frac{1}{(\sinh \rho)^{N-\alpha}}.
\end{align*}
For $1\leq\alpha\leq2$, and $\rho \geq 1$, note that
\begin{align*}
    \frac{e^{-(N-1)\rho}(\sinh \rho)^{N-\alpha}}{\rho^{\frac{2-\alpha}{2}} }\leq \left(\frac{\sinh \rho}{e^\rho}\right)^{N}\frac{e^\rho}{(\sinh \rho)^\alpha}\leq \frac{1}{(\sinh \rho)^{\alpha-1}}\frac{e^\rho}{\sinh \rho}\leq C.
\end{align*}
Now, consider 
\begin{align*}
    (-\Delta_{\mathbb{B}^{N}})^{-\frac{\alpha}{2}} |u|^{p} = k_{\alpha,N} * |u|^p = \Bigl(\chi_{B_R(0)}k_{\alpha,N}\Bigr) * |u|^p+ \left(\chi_{(B_R(0))^{\complement}}k_{\alpha,N}\right) * |u|^p.
    \end{align*}
    From the estimates above, we can find $p_1,p_2$ such that 
    $$\chi_{B_R(0)}k_{\alpha,N} \in L^{p_1}(\bn) \text{ for some } p_1 \in \left(1, \frac{N}{N-\alpha}\right),$$
    and
    $$\chi_{(B_R(0))^\complement}k_{\alpha,N} \in L^{p_2}(\bn) \text{ for some } p_2 \in \left( \frac{N}{N-\alpha},\infty\right).$$
 Given that $\frac{N + \alpha}{N} \leq p \leq \frac{N + \alpha}{N - 2}$, it follows that $2 < \frac{N p}{\alpha} < \frac{N}{\alpha} \frac{2N}{N - 2}$. Therefore, there exist $q_1$ and $q_2$ such that \begin{align*} |u|^p \in L^{q_1}(\mathbb{B}^N) \cap L^{q_2}(\mathbb{B}^N), \end{align*} where $q_i$ and $p_i$ are conjugate indices for $i = 1, 2$. Applying Hölder's inequality, we establish the desired claim. We can conclude the theorem by applying Theorem 4.1 from \cite{BS}, noting that $2 < p \leq \frac{N + \alpha}{N - 2} < \frac{2N}{N - 2}$.
\end{proof}
\begin{remark}
    The $L^p$ Calderón–Zygmund inequality on non-compact Riemannian manifolds has been investigated in \cite{GP}. However, to achieve $W^{2,p}$-type regularity in the current setting, the results from \cite{GP} cannot be directly applied; additional assumptions on the solution are necessary (see \cite[Proposition 3.8]{GP}). 
\end{remark}
\end{section}
\textbf{Acknowledgments.} 
First author is supported by the Prime Minister's Research Fellowship (PMRF). A part of this research is supported by DST-FIST project No. SR/FST/MS-1/2019/45. Authors sincerely acknowledge D. Ganguly for the valuable discussions and insights.

\end{document}